\newcommand{\bw}{\mathbf{w}}
\newcommand{\eps}{\varepsilon}
\newcommand{\Pro}[1]{\mathbb{P} \left[\,#1\,\right]}
\newcommand{\Ex}[1]{\mathbb{E} \left[\, #1\,\right]}
\newcommand{\Po}{\mathsf{Po}}
\newcommand{\remove}[1]{}
\newcommand{\A}{{\ensuremath{\mathcal{A}}}}
\newcommand{\E}{\mathcal{E}}
\newcommand{\PRO}[1]{\mathbb{P}\left[\,#1\,\right]}
\newcommand{\ex}[1]{\mathbb{E} \left[\,#1\, \right]}
\renewcommand{\bw}{\mathbf{w}}
\newcommand{\bW}{\mathbf{W}}
\newcommand{\V}{\mathbf{c}}
\newcommand{\cC}{\mathsf{C}}
\newcommand{\cU}{\mathsf{U}}
\newcommand{\cK}{\mathcal{K}}
\newcommand{\cP}{\mathcal{P}}
\newcommand{\Ao}{\mathcal{A}_0}
\newcommand{\Af}{\mathcal{A}_f}
\def\RR{\mathbb{R}}
\def\PP{\mathbb{P}}
\def\EE{\mathbb{E}}
\newtheorem{theorem}{Theorem}[section]
\newtheorem{lemma}[theorem]{Lemma}
\newtheorem{claim}[theorem]{Claim}
\newtheorem{corollary}[theorem]{Corollary}
\newtheorem{proposition}[theorem]{Proposition}
\newtheorem{definition}[theorem]{Definition}
\title{Bootstrap Percolation in Inhomogeneous Random Graphs}
\author{H.~Amini\thanks{Swiss Finance Institute, EPFL, Lausanne, Switzerland.}
\and N.~Fountoulakis\thanks{School of Mathematics, University of Birmingham, United Kingdom. 
Research supported by the EPSRC Grant No. EP/K019749/1.}
\and K.~Panagiotou\thanks{Institute of Mathematics, Ludwig-Maximilians-Universit\"at, Munich, Germany.}}
\begin{document}
\maketitle

\begin{abstract}
A bootstrap percolation process on a graph with $n$ vertices is an ``infection" process evolving in rounds. Let $r \ge 2$ be fixed. Initially, there is a subset of infected vertices. In each subsequent round every uninfected vertex that has at least $r$ infected neighbours becomes infected as well and remains so forever.

We consider this process in the case where the underlying graph is an inhomogeneous random graph whose kernel is of rank one. Assuming that initially every vertex is infected independently with probability $p \in (0,1]$, we provide a law of large numbers for the size of the set of vertices that are infected by the end of the process. Moreover, we investigate the case $p = p(n) = o(1)$ and we focus on the important case of inhomogeneous random graphs exhibiting a power-law degree distribution with exponent $\beta \in (2,3)$. The first two authors have shown in this setting the existence of a critical $p_c =o(1)$ such that with high probability if $p =o(p_c)$, then the process does not evolve at all, whereas if $p = \omega(p_c)$, then the final set of infected vertices has size $\Omega(n)$. In this work we determine the asymptotic fraction of vertices that will be eventually infected and show that it also satisfies a law of large numbers.
\end{abstract}
%\newpage
\section{Introduction}
A \emph{bootstrap percolation process} with \emph{activation threshold} an integer $r\geq 2$
on a graph $G=G(V,E)$ is a deterministic process evolving in rounds.
Every vertex has two states: it is either \emph{infected} or \emph{uninfected} (sometimes also referred to as \emph{active} or
\emph{inactive}, respectively).
Initially, there is a subset $\Ao \subseteq V$ that consists of infected vertices, whereas every other vertex is uninfected.
Subsequently, in each round, if an uninfected vertex has at least $r$ of its neighbours infected, then it also becomes infected
and remains so forever. The process stops when no more vertices become infected, and we denote the final infected set by $\Af$.

The bootstrap percolation process was introduced by Chalupa, Leath and Reich~\cite{ChLeRe:79} in 1979 in the context of magnetic disordered systems. This process (as well as numerous variations of it) has been used as a model
to describe several complex phenomena in diverse areas, from jamming transitions~\cite{tobifi06} and
magnetic systems~\cite{sadhsh02} to neuronal activity~\cite{Am-nn, ET09} and spread of defaults in banking systems~\cite{ACM11}. Bootstrap percolation has also connections to the dynamics of the Ising model at zero temperature~\cite{Fontes02,GlauberMorris2009}. A short survey regarding applications can be found in~\cite{AdL03}.

Several qualitative characteristics of bootstrap percoloation, and in particular the dependence of the initial set $\Ao$ on the final infected set $\Af$, have been studied on a variety of graphs, such as trees~\cite{BPP06, FS08}, grids~\cite{CM02, holroyd03, BBDM2010}, lattices on the hyperbolic plane~\cite{BootHyper2013}, hypercubes~\cite{BB06}, as well as on many models of random graphs~\cite{Am-bp, balpit07, ar:JLTV10}. In particular, consider the case $r = 2$ and $G$ is the two-dimensional grid with $V = [n]^2=\{1, \dots, n\}^2$ (i.e., a vertex becomes infected if at least two of its neighbours are already infected). Then, for $\Ao \subseteq V$ whose elements are chosen independently at random, each with probability $p=p(n)$, the following sharp threshold was determined by Holroyd \cite{holroyd03}. The probability $I(n, p)$ that the entire square is eventually infected satisfies $I(n,p) \rightarrow 1$ if $\liminf_{n \rightarrow \infty} p(n) \log n > \pi^2/18$,
and  $I(n,p) \rightarrow 0$ if $\limsup_{n \rightarrow \infty} p(n) \log n < \pi^2/18$.
A generalization of this result to the higher dimensional case was proved by Balogh, Bollob\`as and Morris \cite{BBM09} (when $G$ is the 3-dimensional grid on $[n]^3$ and $r=3$) and Balogh, Bollob\`as, Duminil-Copin and Morris \cite{BBDM2010} (in general).

%In the context of real-world networks and in particular in social networks, a bootstrap percolation process can be thought of as a primitive model for the spread of an infection or a new trend within a network. Each of them has a threshold $r$ and $\Ao$ corresponds to the set of individuals who are initially ``infected" or have adopted the new trend.

%Regarding the initial conditions of the bootstrap percolation process, our general assumption will be that the set $\Ao$ of vertices which become initially infected is a random subset of vertices, where each vertex is included independently with probability $p$.

In this paper we study the bootstrap percolation process on inhomogeneous random graphs.
Informally, these random graphs are defined through a sequence of weights that are assigned to the vertices which, in turn, determine the probability that two vertices are adjacent. More specifically, we are interested in the case where this probability is proportional to the product of the weights of these vertices. In particular, pairs of vertices where are at least one of them has a high weight are more likely to appear as edges.

A special case of our setting is the $G(n,p)$ model of random graphs, where every edge on a set of $n$ vertices is
present independently with probability $p$. Here every vertex has the same weight. Recently, Janson, {\L}uczak, Turova and Vallier~\cite{ar:JLTV10} presented a complete analysis of the bootstrap percolation process for
various ranges of $p$. We focus on their findings regarding the range where $p = d/n$ and $d>0$ is fixed, as they are most relevant for the setting studied in this paper. In~\cite{ar:JLTV10} a law of large numbers for $|\Af|$ was shown when the density of $\Ao$ is positive, that is, when $|\Ao| = \theta n$, where $\theta \in (0,1)$.
It was further shown that when $|\Ao| = o(n)$, then typically no evolution occurs.
In other words, the density of the initially infected vertices must be positive in order for the density of the finally infected vertices to increase. This fact had been pointed out earlier by Balogh and Bollob\'as, cf.~\cite{balpit07}.
A similar behavior was observed in the case of random regular graphs \cite{balpit07}, as well as in
random graphs with given vertex degrees. These were studied by the first author in~\cite{Am-bp}, when the sum of the square of degrees scales linearly with $n$. As we shall see shortly, the random graph model we consider here is essentially a random graph with given \emph{expected} degrees.

The main result of this paper provides a law of large numbers for $|\Af|$ given $|\Ao|$ for weight sequences that satisfy fairly general and
 natural regularity conditions. We then consider weight sequences that follow a power law distribution, i.e., the proportion of vertices with
weight~$w$ scales like $w^{-\beta}$ for some~$\beta > 2$, with a particular focus on the case where $\beta \in (2,3)$. 
The parameter $\beta$ is called the \emph{exponent} of the power law. 
Note that although in this case the weight sequence has a bounded average weight, its second moment is growing with the number of vertices. Power-laws emerge in several contexts such as ranging from ecology and economics to social networks (see e.g.\ the survey of
Mitzenmacher~\cite{Mitz2004}). Already during the late 19th century Pareto observed a power law in the disrtibution of the wealth within
populations~\cite{b:p96}. In a completely different context, Lotka~\cite{ar:l26} in 1926 observed a power law distribution on the 
frequencies of scientists that are cited a certain number of times in Chemical Abstracts during the period 1910-1916.  
The article of Albert and Barab\'asi~\cite{AlbBarStatMechs} provides several examples of networks that exhibit
power law degree distributions. In fact, most of these examples exhibit power laws that have exponents between 2 and 3. 
This range of exponents is also associated with \emph{ultra-small} worlds. Chung and Lu~\cite{CL03} showed that for the model which 
we will consider in this paper, the average distance between two vertices in the largest (giant) component scales like $\log \log n$. 

In this work we extend a theorem proved by the first two authors in~\cite{ar:AmFount2012} giving a threshold function $a_c (n)= o(n)$
such that when $a(n)$ grows slower than $a_c (n)$, then with high probability no evolution occurs, but if $a(n)$ grows faster than 
$a_c (n)$, then even if $a(n)=o(n)$, the final set contains a positive fraction of the vertices. Here we determine exactly this fraction and
we show that as long as $a(n) = o(n)$, then it does not depend on $a(n)$ itself. In the rest of this section we proceed with the definition
of the random graph model that we consider and the statement of our theorems.

\paragraph{Notation}
For non-negative sequences $x_n$ and $y_n$ we
write $x_n = O(y_n)$ if there exist $N \in \mathbb{N}$ and $C > 0$ such that $x_n \leq C y_n$ for all $n \geq N$, and
$x_n = o(y_n)$, if $x_n/ y_n \rightarrow 0$, as $n \rightarrow \infty$.
We sometimes also write $x_n \ll y_n$ for $x_n = o(y_n)$.

Let $\{ X_n \}_{n \in \mathbb{N}}$ be a sequence of real-valued random variables on a sequence of probability spaces
$\{ (\Omega_n, \mathbb{P}_n)\}_{n \in \mathbb{N}, \mathcal{F}_n}$. 
If $c \in \mathbb{R}$ is a constant, we write $X_n \stackrel{p}{\rightarrow} c$ to denote that $X_n$ \emph{converges in probability to $c$}, that is, for any $\eps >0$ we have $\mathbb{P}_n (|X_n - c|>\eps) \rightarrow 0$ as $n \rightarrow \infty$. Moreover, let $\{ a_n \}_{n \in \mathbb{N}}$ be a sequence of real numbers that tends to infinity as $n \rightarrow \infty$. We write $X_n = o_p (a_n)$, if $|X_n|/a_n$ \emph{converges to 0 in probability}. If $\mathcal{E}_n$ is a measurable subset of $\Omega_n$, for any $n \in \mathbb{N}$, we say that the sequence $\{ \mathcal{E}_n \}_{n \in \mathbb{N}}$ occurs \emph{asymptotically almost surely (a.a.s.)} or \emph{with high probability (w.h.p.)} if $\mathbb{P}_n (\mathcal{E}_n) = 1-o(1)$ as
$n\rightarrow \infty$.

% Also, we denote by $\Be (p)$ a Bernoulli distributed random variable whose probability of being equal to 1 is $p$.
% The notation $\Bin (k,p)$ denotes a binomially distributed random variable corresponding to the number of
% successes of a sequence of $k$ independent Bernoulli trials each having probability of success equal to $p$.

\section{Models and Results}\label{sec:models}

%\subsubsection{Random graph models}

The random graph model that we consider is asymptotically equivalent to a model
considered by Chung and Lu~\cite{CL03}, and is a special case of the so-called
\emph{inhomogeneous random graph}, which was introduced by S\"oderberg~\cite{ar:s02} and
defined in its full generality by Bollob\'as, Janson and Riordan in~\cite{ar:bjr07}.

\subsection{Inhomogeneous Random Graphs with Rank-1 Kernel}

Let $n\in\mathbb{N}$ and consider the vertex set~$[n] := \{1, \dots, n\}$. Each vertex~$i$ is assigned a positive
weight~$w_i(n)$, and we will write~$\bw=\mathbf{w}(n) = (w_1(n), \dots, w_n(n))$. We will often suppress the
dependence on $n$, whenever it is obvious from the context. For convenience, we will assume that $w_1 \leq w_2 \leq \cdots \leq w_n$.
For any~$S \subseteq [n]$, set
\[
	W_S(\mathbf{w}) := \sum_{i\in S} w_i.
\]
In our random graph model the event of including the edge~$\{i,j\}$ in the
resulting graph is independent of the inclusion of any other edge, and
its probability equals
\begin{equation}
\label{eq:pijCL}
	p_{ij}(\mathbf{w}) = \min\left\{\frac{w_iw_j}{W_{[n]}(\mathbf{w})},
1\right\}.
\end{equation}
This model was studied by Chung et al.\ for fairly general choices of~$\mathbf{w}$,
who studied in a series of papers~\cite{CL02,CL03} several typical properties of the resulting graphs,
such as the average distance between two randomly chosen vertices that belong to the same component or the component size distribution. We will refer to this model as the \emph{Chung-Lu} model, and we shall
write~$CL(\mathbf{w})$ for a random graph in which each possible edge~$\{i,j\}$ is included
independently with probability as in~\eqref{eq:pijCL}. Moreover, we will suppress the dependence on
$\mathbf{w}$, if it is clear from the context which sequence of weights we refer to.

Note that in a Chung-Lu random graph the weights (essentially) control the
\emph{expected} degrees of the vertices. Indeed, if we ignore the minimization
in~\eqref{eq:pijCL}, and also allow a loop at vertex~$i$, then the expected
degree of that vertex is~$\sum_{j=1}^n w_iw_j/W_{[n]} = w_i$.

\subsection{Regular Weight Sequences}

Following van der Hofstad~\cite{book:vdH}, for any $n\in \mathbb{N}$ and
any sequence of weights $\mathbf{w}(n)$ let
\[
	 F_n(x) = n^{-1} \sum_{i=1}^n \mathbf{1}[w_i(n) \leq x], \ \ \forall x\in [0,\infty)
\]
be the empirical distribution function of the weight of a vertex chosen uniformly at random.
We will assume that $F_n$ has a certain structure.
\begin{definition}
\label{def:regcond}
We say that $(\mathbf{w}(n))_{n \ge 1}$ is \emph{regular}, if it has the following 
properties.
\begin{itemize}
  \item {\bf[Weak convergence of weight]} There is a distribution function
$F:[0,\infty)\to [0,1]$ such that for all $x$ at which $F$ is continuous
$\lim_{n\to\infty}F_n(x) = F(x)$.
\item {\bf[Convergence of average weight]} Let $W_n$ be a random variable with
distribution function $F_n$, and let $W_F$ be a random variable with
distribution function $F$. Then $\lim_{n\to\infty}\Ex{W_n} = \Ex{W_F} < \infty$.
\item {\bf[Non-degeneracy]} There is a $x_0 \in \mathbb{R}^+$ such that $F_n(x) = 0$ for all $x \in [0,x_0)$ and  $n \in \mathbb{N}$.
\end{itemize}
\end{definition}
The regularity of~$(\mathbf{w}(n))_{n\ge 1}$ guarantees two important properties. Firstly,
the weight of a random vertex is approximately distributed as a random variable that follows a certain distribution.
Secondly, this variable has finite mean and it is easy to see that the associated Chung-Lu random graph has bounded average degree with high probability. The third property in Definition~\ref{def:regcond} is a minor restriction guaranteeing that no vertex has a vanishing expected degree and is added for convinience in order to simplify several of our technical considerations.

At many places in our arguments it will be important to select vertices randomly according to their weight, i.e. the probability to choose $i \in [n]$ equals $w_i / W_{[n]}(\mathbf{w})$. This is the so-called \emph{size-biased} distribution and we denote by $W_F^*$ a random variable with this distribution. A straightforward calculation shows that for every bounded continuous function~$f$ 
\begin{equation} \label{eq:Size_biased}
\Ex {f(W_F^*)} = {\Ex {W_F f(W_F)} \over \Ex{W_F}}.
\end{equation}
\subsection{Results}

The main theorem of this paper gives a law of large numbers for the size of $\Af$ when $\Ao$ has positive density in the case where the underlying random graph is a Chung-Lu random graph with a regular weight sequence.
Let $\psi_r (x)$ for $x \geq 0$ be equal to the probability that a Poisson-distributed random variable with
parameter $x$ is at least $r$, i.e.,
$$\psi_r (x):= \PP(\Po(x)\geq r) = \sum_{j \ge r} e^{-j}\,  {x^j}/{j!}.$$
Let $X$ be a non-negative random variable and $p \in [0,1]$. For any $r\geq 1$ and $y \in \mathbb{R}^+$ set
$$ f_r (y; X,p) = (1-p) \ex {\psi_r (Xy)} + p - y.$$
\begin{theorem} \label{thm:FinalInfection}
Let $(\mathbf{w}(n))_{n \geq 1}$ be regular with limiting distribution function $F$. Consider
the bootstrap percolation process on $CL(\mathbf{w})$ with activation threshold $r\geq 2$, where $\Ao \subseteq [n]$ includes any
vertex independently with fixed probability $p \in (0,1)$. Let $\hat{y}$ be the smallest positive solution of
\begin{equation}
\label{eq:defhaty}
f_r(y;W_F^*,p) = 0.
\end{equation}
Assume also that $f_r'(\hat{y};W_F^*,p) < 0$.
Then
\begin{equation}
\label{eq:convergenceGeneral}
n^{-1}{|\Af|} \stackrel{p}{\rightarrow} (1-p)\ex {\psi_r (W_F\hat{y})}+p, \ \mbox{as $n \rightarrow \infty $}.
\end{equation}
\end{theorem}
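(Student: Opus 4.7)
The plan is to exploit the independence of edges in $CL(\bw)$ and to analyse the process via a vertex-by-vertex exploration combined with a fluid-limit approximation. I maintain a queue $Q$ of infected-but-unprocessed vertices, initialised to $\Ao$. At each step I pop some $v\in Q$, reveal the edges incident to $v$ using the principle of deferred decisions, and increment a counter $K_j$ for every newly discovered neighbour $j$. Whenever $K_j$ reaches $r$ and $j\notin\Ao$, the vertex $j$ becomes infected and is pushed onto $Q$; the set $\Af$ equals the set of all vertices ever enqueued, and the process halts at the first time $T$ when $Q$ is empty.

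The natural state variable is $Y_t = W_{S_t}(\bw)/W_{[n]}(\bw)$, where $S_t$ is the set of vertices popped from $Q$ during the first $t$ steps. Because $p_{ij} = w_iw_j/W_{[n]}$ and the edges are mutually independent, the counter $K_j(t)$ is well approximated by $\Po(w_j Y_t)$ for each fixed $j$. Consequently the weighted fraction of infected vertices after $t$ steps is close to
\[
A(Y_t) := (1-p)\ex{\psi_r(W_F^* Y_t)} + p,
\]
where $W_F^*$ arises through \eqref{eq:Size_biased} when we average $w_j\psi_r(w_j Y_t)$ against the weight sequence. The exploration can continue only as long as $A(Y_t) > Y_t$ (i.e.\ there are unprocessed infected vertices), and it terminates the first time $Y_t$ meets a root of $A(y)=y$. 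Since $Y_0$ is close to $p$ and $A$ is increasing, this first root is exactly the smallest positive solution $\hat y$ of $f_r(y;W_F^*,p) = 0$. Concentration of $Y_t$ around this deterministic trajectory is obtained by a Wormald-type differential-equation argument in the spirit of~\cite{ar:JLTV10}: one tracks the vector of numbers of vertices in each weight-bracket--counter-value pair, writes its one-step drift explicitly using the edge probabilities, and controls fluctuations via Azuma--Hoeffding on the martingale increments coming from each edge revelation. The transversality hypothesis $f_r'(\hat y;W_F^*,p)<0$ ensures that $A(y)-y$ changes sign strictly at $\hat y$, so the stochastic trajectory cannot drift past $\hat y$ to a different root and the hitting time concentrates. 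Once $Y_T \stackrel{p}{\to}\hat y$ is established, the vertex-count limit \eqref{eq:convergenceGeneral} follows from a routine application of the weak law of large numbers: conditional on $Y_T$, each vertex $j$ is infected with probability close to $p+(1-p)\psi_r(w_j\hat y)$, and averaging over the empirical distribution $F_n\to F$ yields $p+(1-p)\ex{\psi_r(W_F\hat y)}$.

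The main technical obstacle will be the contribution of high-weight vertices. Regularity of $(\bw(n))$ only provides $L^1$-control, so a $o(n)$-fraction of vertices may carry weight $\omega(1)$; for such a vertex $j$, the edge probabilities $p_{ij}$ need not be small and the Poisson approximation for $K_j$ breaks down, as does the independence-flavoured martingale bound. The standard remedy is a two-level truncation: cut weights at a slowly growing $M=M(n)\to\infty$, run the fluid-limit analysis on the truncated graph where the Poisson bounds hold uniformly, and then argue separately that the discarded heavy vertices are too few to displace the weighted mass or to trigger a parallel cascade. The transversality hypothesis is invoked a second time here: it certifies that the $o(1)$ perturbation introduced by truncation does not move the equilibrium past $\hat y$ to a different root. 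A secondary, more routine, issue is that the counters $K_j$ are not strictly independent across $j$; this is handled by conditioning on the history $(W_{S_t})_{t \le T}$, after which the remaining edges are genuinely independent and standard concentration inequalities apply.
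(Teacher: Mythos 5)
Your overall strategy --- sequentially exposing infected vertices, tracking weight-class-by-counter counts with Wormald's differential equation method, identifying the limit through the smallest root of $f_r(y;W_F^*,p)=0$, and truncating heavy weights --- is the same as the paper's, but the proposal has genuine gaps at exactly the points where the paper's work lies. First, the truncation. You cut the weights at a slowly growing $M(n)\to\infty$ and propose to run the fluid-limit analysis on the truncated graph; but with $M(n)\to\infty$ the number of tracked coordinates and the Lipschitz constants of the drifts (which carry factors $W_i/d$) are not uniformly bounded, so Theorem~\ref{thm-eqdif1} does not apply as stated. This is why the paper truncates at a \emph{fixed} level $C_\gamma$ with finitely many weight classes (Definition~\ref{def:FinitarySequence}), proves Theorem~\ref{thm:FinalInfectionI} for that finitary system, and only afterwards lets $\ell\to\infty$, $\gamma\to0$. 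Moreover, shrinking or discarding heavy vertices only gives a \emph{lower} bound on $|\Af|$: for the upper bound you cannot drop them, since once infected they spread infection; the paper instead declares them infected and replaces each by boundedly many copies of weight $2C_\gamma$, proving the domination inequality \eqref{eq:aim_proof} in Claim~\ref{clm:stoch_sandwich}. Your phrase ``too few to trigger a parallel cascade'' is precisely the statement that needs this coupling. Finally, each truncation produces a \emph{different} fixed-point equation, and the claim that its smallest root stays within $o(1)$ of $\hat y$ (with the sign condition on the derivative preserved) is not a consequence of $f_r'(\hat y;W_F^*,p)<0$ alone; the paper needs Lemma~\ref{lem:size_biased_approx}, Lemma~\ref{lem:stableSol} and Propositions~\ref{prop:cts_approx}--\ref{prop:roots_approx} (an Arzel\'a--Ascoli, subsequence argument) for exactly this transfer, and even then only obtains subsequential convergence of the approximating roots.

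Second, the endgame. Wormald's theorem controls the trajectory only while the queue's weighted mass exceeds $\eps n$; when the process exits that domain near the root it is not yet finished, and ``the stochastic trajectory cannot drift past $\hat y$'' is not a proof that only $O(\eps n)$ further vertices become infected. The paper devotes a substantial separate argument to this: after $T_{D_\eps}$ it dominates the process by a subcritical multi-type branching process, using $\mu_{\cU}'(\hat{\tau}^{(\ell,\gamma)})<0$ (the transversality hypothesis, via \eqref{muDerivative} and \eqref{eq:eigenvalue}) to keep the largest eigenvalue of the progeny matrix bounded away from $1$, together with Talagrand-type concentration to control how that eigenvalue drifts as the late stages evolve. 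An Azuma--Hoeffding bound on the exploration martingale does not substitute for this, because once the queue is sublinear the fluctuations are no longer small relative to the quantity being tracked. So while the architecture of your sketch matches the paper, the two-sided coupling for heavy vertices, the double-limit transfer of the fixed point, and the subcritical closing argument all still have to be supplied.
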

We remark that a solution $\hat{y}$ to~\eqref{eq:defhaty} always exists because $f_r(y;W_F^*,p)$ is continuous, $f_r(0;W_F^*,p) > 0$ and $f_r(1;W_F^*,p) \leq 0$. Note that the conclusion of our results is valid only if
$f_r'(\hat{y};W_F^*,p) < 0$. This does not happen only if
$$\EE\left[\frac{e^{-\hat{y}W_F^*}(W_F^*\hat{y})^r}{r!}\right] = \frac{\hat{y}}{(1-p)r},$$
and for such (rather exceptional) weight sequences we expect a different behavior.
Moreover, we show that (c.f.\ Lemma~\ref{lem:power_law_derivative}) if the weight sequence has power law distribution with exponent between 2 and 3, this case will not happen (i.e., $f_r'(\hat{y};W_F^*,p) < 0$ always).

Intuitively, the quantity $\hat{y}$ represents the limit of the probability that a random vertex is becomes infected. 
The fixed-point equation $f_r(y;W_F^*,p) = 0$, whose solution $\hat{y}$ is, effectively says that a vertex is infected if 
either it is externally infected (which occurs with probability $p$) or (if not, which occurs with probability $1-p$) it has at least 
$r$ infected neighbours. The latter is a Poisson-distributed random variable with parameter equal to $W_F^* \hat{y}$. 
The first factor essentially states the fact that a vertex becomes some other vertex's neighbour with probability proportional to the 
latter's weight, whereas it is infected with probability approximately $\hat{y}$.   

We will now see an extension of the above theorem to the case where $p$ is not anymore bounded away from 0. Under certain conditions
the above theorem can be transferred to this case simply setting $p=0$.  These conditions ensure that a positive but rather
small fraction of the vertices become infected and this effectively corresponds to taking a $p$ that is in fact bounded away from 0 but small. 

\subsection{Power-law Weight Sequences}

Our second result focuses on an important special case of weight sequences, namely those following a power law distribution. This is described by the following condition.
\begin{definition}
\label{def:Distr}
We say that a regular sequence~$( \mathbf{w}(n) )_{n \ge 1}$ follows \emph{a power law with exponent~$\beta$},
if there are~$0< c_1< c_2$, $x_0 >0$ and $0 < \zeta \leq {1/(\beta -1)} $ such that for all~$x_0 \le x \le n^{\zeta}$
\[
	c_1 x^{-\beta + 1}\le 1 - F_n(x) \le c_2 x^{-\beta + 1},
\]
and $F_n(x) = 0$ for $x < x_0$ and $F_n(x) = 1$ for $x > n^{\zeta}$.
Moreover, for any $x > x_0$ we have for some $c>0$
$$ \lim_{n \to \infty} F_n(x) = F(x) = 1 - c x^{-\beta + 1}. $$
\end{definition}
Note that the above definition implies that for $\zeta > {1/(\beta -1)}$, we have $n(1-F_n(n^\zeta)) = 0$, since
$1-F_n(n^\zeta) \leq c_2 n^{-\zeta(\beta-1)}= o(n^{-1})$. So it is natural to assume that $\zeta  \leq {1/(\beta -1)}$.

A particular example of a power-law weight sequence is given in~\cite{CL03}, where the authors choose~$w_i = d ({n}/{(i + i_0)})^{1/(\beta - 1)}$ for some $d>0$. This results typically in a graph with a power law degree sequence with exponent~$\beta$, average degree~$O(d)$, and maximum degree proportional to~$({n}/{i_0})^{1/(\beta - 1)}$, see also~\cite{book:vdH} for a detailed discussion. When $\beta \in (2,3)$, these random graphs are also characterized as \emph{ultra-small worlds}, due to the fact that the typical distance of two vertices that belong to the same component is $O(\log \log n)$, see~\cite{CL03,book:vdH}.

Theorem~\ref{thm:FinalInfection} addresses the case where the initial set $\Ao$ has positive density. Our second result is complementary
and considers the setting where $p = p(n) = o(1)$, with a particular focus on the case where the exponent of the power law is in $(2,3)$
Assume that $\Ao$ has density $a(n)/n$. In~\cite{ar:AmFount2012} the first two authors determined a function $a_c(n)$ (see the statement of the next theorem)
such that when $a(n) = o (a_c (n))$, then a.a.s.\ $|\Ao| = |\Af|$, whereas if $a(n) = \omega(a_c(n))$ but $a(n) = o(n)$, then a.a.s.\ $|\Af| > \eps n$, for some $\eps >0$. We refine this result using the proof of Theorem~\ref{thm:FinalInfection} and determine the fraction of vertices that belong to $\Af$.
\begin{theorem} \label{thm:power_law}
Let $(\mathbf{w}(n))_{n \geq 1}$ be regular following a power law with exponent $\beta\in(2,3)$ and with 
$\zeta$ that satisfies ${r-1 \over 2r - \beta + 1} < \zeta \leq {1\over \beta -1}$.
Let $$a_c (n) = n^{(r(1-\zeta) + \zeta (\beta -1)-1)/ r}.$$
Consider the bootstrap percolation process on $CL (\mathbf{w})$ with activation threshold $r\geq 2$.
Assume that $\Ao$ is a random subset of $[n]$ where each vertex is included independently with probability $a(n)/n$. Then, if $a(n) = \omega(a_c (n))$ and $a(n) = o(n)$
$$ n^{-1}{|\Af|} \stackrel{p}{\rightarrow} \ex {\psi_r (W_F \hat{y})} \ \mbox{as $n \rightarrow \infty $}, $$
where $\hat{y}$ is the smallest positive solution of
$$ y = \ex {\psi_r (W_F^* y)}.$$
\end{theorem}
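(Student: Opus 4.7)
The plan is to follow the proof of Theorem~\ref{thm:FinalInfection} with the constant $p$ replaced by the vanishing density $p = p(n) = a(n)/n$, and to show that the fixed-point analysis from that proof degenerates in a controlled way as $p \to 0^+$. Concretely, if $\hat{y}_p$ denotes the smallest positive root of $f_r(y; W_F^*, p) = 0$, one establishes that $\hat{y}_p \to \hat{y}$ as $p \to 0^+$, where $\hat{y}$ is the smallest positive root of $y = \ex{\psi_r(W_F^* y)}$. The limiting infected fraction $(1-p)\ex{\psi_r(W_F \hat{y}_p)} + p$ then converges to $\ex{\psi_r(W_F \hat{y})}$, matching the claim.

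The first step is analytical: to establish that $g(y) := \ex{\psi_r(W_F^* y)}$ has a smallest positive fixed point and that $g'(\hat{y}) < 1$. From the size-biasing identity~\eqref{eq:Size_biased}, $W_F^*$ inherits a power-law tail with exponent $\beta - 2 \in (0,1)$ (with infinite mean in the limit, yet $\ex{\psi_r(W_F^* y)} \leq 1$ is always well-defined). Splitting $\psi_r$ into its small-argument polynomial regime and its tail regime, a direct calculation gives $g(y) \sim c\, y^{\beta-2}$ as $y \to 0^+$, so $g(y) > y$ for $y$ near $0$. Together with $g(1) \leq 1$ this yields a smallest positive fixed point $\hat{y} \in (0,1]$, and the strict inequality $g'(\hat{y}) < 1$, i.e.\ $f_r'(\hat{y}; W_F^*, 0) < 0$, is precisely the content of the lemma advertised in the text (Lemma~\ref{lem:power_law_derivative}). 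Continuity $\hat{y}_p \to \hat{y}$ as $p \to 0$ then follows by a bracketing argument.

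The second step is probabilistic: adapt the fluid-limit / round-by-round edge-exposure machinery underlying Theorem~\ref{thm:FinalInfection} to the seed $\Ao$ of size $a(n) = o(n)$. Two things must be verified. First, the process must escape the subcritical regime and grow past any fixed density $\eps > 0$; the hypothesis $a(n) = \omega(a_c(n))$, together with the hub-driven mechanism specific to power-law graphs (the condition $\zeta > (r-1)/(2r-\beta+1)$ guarantees that the highest-weight vertices acquire $\geq r$ infected neighbours with probability bounded away from $0$), achieves this by the results of~\cite{ar:AmFount2012}. Second, once the infected set has reached constant density, a monotone coupling with the constant-$p'$ process for each small $p' > 0$ allows Theorem~\ref{thm:FinalInfection} to pin the asymptotic fraction to $\ex{\psi_r(W_F \hat{y}_{p'})}$; letting $p' \to 0$ via the continuity from step~1 then yields the claimed limit.

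The main obstacle is ensuring that the process selects \emph{the smallest} positive fixed point $\hat{y}$ rather than a larger one. The lower bound on $|\Af|/n$ is immediate from monotonicity once the escape phase is complete. The upper bound is more delicate: one must show that the trajectory of the fluid approximation, started from the small seed of density $a(n)/n$, does not overshoot past $\hat{y}$ before coming to rest. The strict inequality $g'(\hat{y}) < 1$ from step~1 is precisely what makes $\hat{y}$ a stable fixed point approached from below, so a careful concentration analysis of the increments---mirroring the one used to handle the initial rounds in the proof of Theorem~\ref{thm:FinalInfection}---should supply the matching upper bound and complete the argument.
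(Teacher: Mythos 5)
Your step~1 (positivity of $\hat y$ via the $y^{\beta-2}$ behaviour of $\ex{\psi_r(W_F^*y)}$, the stability $f_r'(\hat y;W_F^*,0)<0$ as in Lemma~\ref{lem:power_law_derivative}, and continuity of the fixed points as the seed density vanishes) is sound, and your upper bound is exactly the paper's: since $a(n)/n=o(1)$, couple $\Ao\subseteq\Ao^{(\eps)}$, apply the constant-density result with $p=\eps$, and let $\eps\to0$ using $\hat y_\eps\to\hat y$. The genuine gap is in your lower bound. You claim that "once the infected set has reached constant density, a monotone coupling with the constant-$p'$ process" pins the fraction, and that the lower bound is "immediate from monotonicity once the escape phase is complete." No such coupling exists: the set produced by the escape phase of~\cite{ar:AmFount2012} is concentrated on the high-weight vertices (essentially $(1-\eps)|\cK_C|$ hubs), whereas a Bernoulli($p'$) seed consists overwhelmingly of low-weight vertices, so w.h.p.\ $\Ao^{(p')}\not\subseteq$ the post-escape infected set and monotonicity in the seed gives nothing. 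Nor can you invoke Theorem~\ref{thm:FinalInfection} as a black box for this seed, since that theorem requires a Bernoulli($p$) initial set with $p$ bounded away from $0$, and its fixed-point equation carries the $p$-terms rather than the hub contribution; there is also a conditioning issue, as the escape phase has already exposed edges.

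What is actually needed --- and what the paper does --- is to rerun the fluid-limit analysis with the infected hubs themselves as the seed. Lemma~\ref{lem:CKernelInfection} infects $(1-\eps)|\cK_{C_\gamma}|$ kernel vertices with all low-weight vertices frozen; these are then reassigned weight $C_\gamma$ and taken as the initially infected class $\cC_\gamma'$ in the discretised graph $CL'(\tilde{\bW}^{(\ell,\gamma)-})\subseteq CL(\bw)$, and Theorem~\ref{thm:FinalInfectionI} is applied with $p=0$ (the theorem is formulated for $p\in[0,1)$ with $\cC_\gamma'$ infected precisely for this purpose). The resulting fixed-point equation replaces the $p$-term by the hub-weight term $W_\gamma'/d$, and the auxiliary analysis (Propositions~\ref{prop:cts_approx} and~\ref{prop:roots_approx}) shows that this term both prevents the finitary root from collapsing to the trivial root $0$ and forces it to converge to $\hat y$ as $\gamma\downarrow 0$, $\ell\to\infty$. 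Without this hub-seeded, $p=0$ version of the fluid limit, your argument only yields $|\Af|\geq \eps n$ for some $\eps>0$, not the matching lower bound $(1-o(1))\ex{\psi_r(W_F\hat y)}\,n$.
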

Let us remark here that the (rescaled) size of the final set does not depend on $|\Ao|$. We should also point out that the lower bound 
on $\beta$ has its origins at the proof of the main theorem in~\cite{ar:AmFount2012}. For $\zeta \leq {r-1 \over 2r - \beta + 1}$, the 
authors of~\cite{ar:AmFount2012} identified two functions $a_c^{-}(n) \ll a_c^{+}(n) = o(n)$ such that if $a(n) \gg a_c^+(n)$, then 
$|\Af| > \eps n$, for some $\eps >0$, but if $a(n) \ll a_c^- (n)$, then a.a.s. $|\Ao|= |\Af|$. In fact, the proof of the above theorem is 
such that it also holds for $a(n) \gg a_c^+ (n)$. More generally, the above theorem holds as long as the initial density is
such that a.a.s most vertices of weight that is larger some big constant become infected.

\subsection{Outline}

The proofs of Theorems~\ref{thm:FinalInfection} and~\ref{thm:power_law} are based on a \emph{finitary} approximation of the weight sequence $\bw (n)$. In the following section we construct a sequence of weight sequences having only a finite number of weights and that ``approximate'' the initial sequence in a certain well-defined sense. Thereafter, we show the analogue of Theorem~\ref{thm:FinalInfection} for finitary sequences; this is Theorem~\ref{thm:FinalInfectionI} stated below. The proof of Theorem~\ref{thm:FinalInfectionI} is based on the so-called \emph{differential equation} method, which was developed by
Wormald~\cite{Worm95,Worm99}, and is used to keep track of the evolution of the bootstrap percolation process through the exposure of the neighbours of each
infected vertex. Such an exposure algorithm was also applied in the homogeneous setting~\cite{ar:JLTV10}. Of course,
the inhomogeneous setting imposes significant obstacles. We close the paper with the proof of some  rather technical results, which transfer the condition on the derivative that appears in the
statement of Theorem~\ref{thm:FinalInfection} in the finitary setting.

\section{Finitary Weight Sequences}
In this section we will consider, what we call, \emph{finitary} weight sequences on $[n]$ that are suitable approximations of an arbitrary weight sequence $\bw(n)$. As a first step we are going to ``remove'' all weights from $\mathbf{w}$ that are too large in the following sense. Suppose that  $\bw (n)$ is regular and that the corresponding sequence of empirical distributions
 converges to $F$.
 %Assume that the minimum weight is bounded from below by $x_0 >0$ for all $n$.
For $\gamma > 0$ let
$$C_{\gamma} = C_\gamma(F) = \inf \{x  \mid F(x) \geq 1-\gamma \}.$$
Then, as $n \rightarrow \infty$, the following facts are immediate consequences. Let $\cC_{\gamma} = \cC_{\gamma}(n,F)$ be the set of vertices in $[n]$ with weight at least $C_\gamma(F)$. 
\begin{enumerate}
\item[1.] If the infimum in the definition of $C_\gamma(F)$ is attained then $|\cC_{\gamma}(n,F)|/n \rightarrow \gamma$.
\item[2.] We have that
$$ n^{-1}{W_{\cC_{\gamma}(n,F)} (\bw(n)) } \rightarrow \int_{C_{\gamma}}^{\infty} x dF(x) =:
W_{\gamma}(F), $$
where the latter is the Lebesque-Stieltjes integral with respect to $F$.
\item[3.] The assumption $\ex {W_F} = d < \infty$ implies that $\PRO {W_F > x } = o(1/x)$ as $x \rightarrow \infty$. Thus
\begin{equation}
\label{eq:convpr}
C_\gamma(F) \PRO {W_F > C_\gamma(F)} \rightarrow 0, \ \mbox{as $\gamma \downarrow 0$}.
\end{equation}
We will be using this observation at several places in our proofs.
\end{enumerate}
We will approximate a regular $(\bw(n))_{n \ge 1}$ by a sequence where most vertices have their weights within a finite set of values and moreover the weights are bounded by $2C_\gamma(F)$ (cf.~\cite{book:vdH} where a similar approach is followed in a different context).

\begin{definition} \label{def:FinitarySequence}
Let $\ell \in \mathbb{N}$ and $\gamma \in (0,1)$. Let $n'=n'(n) \in \mathbb{N}$ be an increasing function of $n$.
We say that a regular weight sequence
\[
	(\bW^{(\ell, \gamma)}(n))_{n \ge 1 }
	= \left(W_1^{(\ell, \gamma)}(n),\dots, W_{n'}^{(\ell, \gamma)}(n)\right)_{n \ge 1}
\]
is a $(\ell, \gamma)$-\emph{discretisation} of a regular $(\bw(n))_{n \ge 1}$ if the following conditions are satisfied. Let $x_0 > 0$ be such that $F_n(x) = 0$ for all $x < x_0$ and $n \in \mathbb{N}$ and let $F$ be the limiting distribution of $(\bw(n))_{n \ge 1}$. Then there is a $p_\ell \in \mathbb{N}$, and real numbers $\gamma_1, \dots, \gamma_{p_\ell} \in (0,1)$ such that $\sum_{i=1}^{p_\ell} \gamma_i = 1 - \gamma$ and real weights $x_0 \le W_1 \le \dots \le W_{p_\ell} \le C_\gamma(F)$ such the following hold.
\begin{enumerate}
\item[1.] There is a partition of $[n]\setminus \cC_{\gamma}(F)$ with $p_\ell$ parts, denoted by $\cC_1(n),\ldots, \cC_{p_\ell}(n)$ such that $|\cC_i(n)| = (1+o(1)) \gamma_i n $ for all $1\le i \le p_\ell$.
\item[2.] For all $1 \le i \le p_\ell$ and for all $j \in \cC_i(n)$  we have $W_j^{(\ell, \gamma)}(n') = W_i$.
\item[3.] Let $\cC_\gamma'(n) :=  [n']\setminus \cup_{i=1}^{p_\ell} \cC_i(n)$. Then
$C_{\gamma}(F) \leq W_j^{(\ell, \gamma )}(n') \leq 2 C_{\gamma}(F)$ for all $j \in \cC_\gamma'(n)$ .
\end{enumerate}
Moreover, as $n \to\infty$
\begin{enumerate}
\item[4.] There is a $0 \le \gamma' < \gamma + 2W_\gamma(F)/C_\gamma(F)$ such that
$ n^{-1}{|\cC_\gamma'(n)|} \rightarrow \gamma' . $
\item[5.] There is a $0 \le W_\gamma' \le 5 W_\gamma(F)$ such that
$n^{-1}{W_{\cC_\gamma'(n)}(\bW^{(\ell,\gamma)}(n))} \rightarrow  W_\gamma'.$
%\item[6.] the weight sequence $\bW^{(\ell, \gamma)}(n')$ gives rise to a sequence of the corresponding empirical distributions which we denote by $F^{(\ell, \gamma)}_{n'}$ and we assume that they converge weakly to a limiting distribution $F^{(\ell,\gamma)}$.
\end{enumerate}
\end{definition}
The upper bounds in \emph{4.}\ and \emph{5.}\ are tailored to the proof of
Theorem~\ref{thm:FinalInfection}. Note that in the previous definition no requirement is made on the magnitude of the $W_i$s, and thus $\bW^{(\ell,\gamma)}$ might look very different from $\bw$. The next definition quantifies when a $(\ell,\gamma)$-discretisation is ``close'' to a given regular $(\bw(n))_{n \ge 1}$ with limiting distribution function $F$.
\begin{definition} \label{def:F-conv}
Let $(\bw(n))_{n \ge 1}$ be regular and let $F$ be its limiting distribution function. Let $\rho:[0,1] \to \mathbb{R}^+$ be such that $\lim_{\gamma\to0}\rho (\gamma ) = 0$.
A sequence $( (\bW^{(\ell, \gamma)}(n))_{n \ge 1} )_{\ell \in \mathbb{N}}$ of discretisations of $(\bw(n))_{n \ge 1}$ is called $F$-\emph{convergent with error $\rho$} if there exists $\gamma_0>0$ and $L_1(\gamma) \in \mathbb{N}$ such that for any $\gamma < \gamma_0$ and  $\ell > L_1 (\gamma)$
\begin{enumerate}
\item
$\sup_{x\in[x_0,C_\gamma(F)]} |F^{(\ell, \gamma)}(x)- F(x)| < 2(\gamma + W_\gamma(F) /C_\gamma(F))$ and \item $\left|\int_{0}^{\infty}x dF^{(\ell,\gamma)}(x) - d \right| < \rho(\gamma)$,  where $d=\int_{0}^{\infty}x dF(x)=\mathbb{E}[W_F]$.
\end{enumerate}
\end{definition}
Let us write~$F^{(\ell, \gamma)}_{n}$ for the empirical distribution function of the weight of a random vertex in $CL(\bW^{(\ell,\gamma)}(n))$. By assumption, $F^{(\ell, \gamma)}_{n}$ converges to a function $F^{(\ell, \gamma)}$. It follows that the corresponding random variable $W_{F^{(\ell,\gamma)}}$, which we denote for brevity by $U^{(\ell, \gamma)}$, is  such that for $1 \le i \le p_\ell$
\begin{equation}
\label{eq:probs}
	\PRO{W_{F^{(\ell,\gamma)}} = W_i}
	= \PRO{U^{(\ell,\gamma)} = W_i} =  {\gamma_i \over 1- \gamma + \gamma'}=:\gamma_i'.
\end{equation}

Now let $F^{*(\ell,\gamma)}$ denote the cumulative distribution function of
the $U^{(\ell,\gamma)}$ size-biased distribution, cf.~\eqref{eq:Size_biased}. Let also $F^*$ denote that of $W_F^*$.
The conditions of Definition~\ref{def:F-conv} imply the following technical statement that we will use later in our proof, which states that $F^{*(\ell,\gamma)}$ is close to $F^*$ almost everywhere in the interval $[x_0, C_\gamma(F)]$. In particular, let $\mathcal{D}_{\gamma, \ell}$ denote the set of discontinuities of $F^{*(\ell,\gamma)}$ in the closed interval
$[x_0,C_\gamma(F)]$ and let $\mathcal{D}$ denote the set of discontinuities of $F^*$. We set
$$\mathcal{D}_\gamma := \mathcal{D} ~\cup~ \bigcup_{\ell \in \mathbb{N}} \mathcal{D}_{\gamma,\ell}.$$
This is a countable set and therefore it is of measure zero.
We will show that the $L_\infty$-norm of the difference between $F^*$ and $F^{*(\ell,\gamma)}$ on $[x_0,C_\gamma(F)] \setminus \mathcal{D}_\gamma$ can be bounded by a vanishing (as $\gamma \rightarrow 0$) function of $\gamma$.
\begin{lemma}\label{lem:size_biased_approx}
Let $(\bw(n))_{n \ge 1}$ be regular and let $F$ be its limiting distribution function. Let $( (\bW^{(\ell, \gamma)}(n))_{n\ge 1})_{\ell \in \mathbb{N}}$ be $F$-{convergent with error $\rho$}. Then there exists a $\rho_1:\mathbb{R}^+\to\mathbb{R}^+$ such that $\lim_{\gamma\to0}\rho_1 (\gamma ) = 0$ for which the following holds. There is $\gamma_1 >0$ such that for any $0 < \gamma < \gamma_1$
and any $\ell$ sufficiently large (depending on $\gamma$ only)
$$ | F^{*(\ell,\gamma)}(x) - F^*(x)| < \rho_1 (\gamma)
\qquad
\text{for all }  x \in [x_0,C_\gamma(F)] \setminus \mathcal{D}_\gamma.$$
\end{lemma}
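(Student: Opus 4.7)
The plan is to reduce the comparison of the size-biased CDFs $F^{*(\ell,\gamma)}$ and $F^*$ to a pointwise comparison of the underlying CDFs $F^{(\ell,\gamma)}$ and $F$, which is exactly what the $F$-convergence hypothesis provides. Applying the size-biased formula \eqref{eq:Size_biased} (which carries over to indicator functions via the Radon--Nikodym derivative $y/d$),
$$F^*(x) = d^{-1}\int_0^x y\, dF(y), \qquad F^{*(\ell,\gamma)}(x) = (d^{(\ell,\gamma)})^{-1}\int_0^x y\, dF^{(\ell,\gamma)}(y),$$
where $d = \ex{W_F}$ and $d^{(\ell,\gamma)} = \int_0^\infty y\, dF^{(\ell,\gamma)}(y)$. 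Writing $I(x)$ and $I^{(\ell,\gamma)}(x)$ for the two numerators, we obtain
$$F^{*(\ell,\gamma)}(x) - F^*(x) = \frac{I^{(\ell,\gamma)}(x) - I(x)}{d^{(\ell,\gamma)}} + \frac{I(x)\,(d - d^{(\ell,\gamma)})}{d\,d^{(\ell,\gamma)}}.$$

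The second summand is immediately controlled by condition~2 of Definition~\ref{def:F-conv}, which gives $|d - d^{(\ell,\gamma)}| < \rho(\gamma)$, combined with $I(x) \le d$ and the bound $d^{(\ell,\gamma)} \ge d/2$ valid for $\gamma$ small enough. For the first summand I would invoke integration by parts for Stieltjes integrals: since $F$ and $F^{(\ell,\gamma)}$ are right-continuous, non-decreasing, and vanish on $[0,x_0)$,
$$I(x) = xF(x) - \int_{x_0}^x F(y)\, dy, \qquad I^{(\ell,\gamma)}(x) = xF^{(\ell,\gamma)}(x) - \int_{x_0}^x F^{(\ell,\gamma)}(y)\, dy,$$
and hence
$$|I^{(\ell,\gamma)}(x) - I(x)| \le x\,|F^{(\ell,\gamma)}(x) - F(x)| + \int_{x_0}^x |F^{(\ell,\gamma)}(y) - F(y)|\, dy.$$
For $x \in [x_0, C_\gamma(F)]$, condition~1 of Definition~\ref{def:F-conv} bounds both the boundary term and the integrand by $2(\gamma + W_\gamma(F)/C_\gamma(F))$, yielding a total of at most $4 C_\gamma(F)(\gamma + W_\gamma(F)/C_\gamma(F)) = 4\bigl(\gamma C_\gamma(F) + W_\gamma(F)\bigr)$.

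The key elementary observation is that $\gamma C_\gamma(F) \le W_\gamma(F)$. Indeed, the definition $C_\gamma(F) = \inf\{x : F(x) \ge 1-\gamma\}$ together with right-continuity of $F$ forces $\PRO{W_F \ge C_\gamma(F)} \ge \gamma$, whence
$$W_\gamma(F) = \ex{W_F\,\mathbf{1}[W_F \ge C_\gamma(F)]} \ge C_\gamma(F)\,\PRO{W_F \ge C_\gamma(F)} \ge \gamma C_\gamma(F).$$
Since $W_\gamma(F) \to 0$ as $\gamma \downarrow 0$ (by finiteness of $\ex{W_F}$) and $\rho(\gamma) \to 0$ by hypothesis, combining the two estimates yields a uniform bound of the form $\rho_1(\gamma) = c\bigl(W_\gamma(F) + \rho(\gamma)\bigr)$ for some absolute constant $c$, which vanishes with $\gamma$ and is independent of both $\ell > L_1(\gamma)$ and $x$. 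The role of excluding $\mathcal{D}_\gamma$ is only to guarantee that $x$ is a continuity point of both $F^*$ and $F^{*(\ell,\gamma)}$, so that the half-open/closed interval conventions in the Stieltjes integrals cause no ambiguity.

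The main technical step is the integration-by-parts manoeuvre: the hypothesis provides only \emph{pointwise} control of $F^{(\ell,\gamma)} - F$, whereas the size-biased CDFs are weighted by $y$, which accentuates large weights; the conversion becomes clean precisely because $\gamma C_\gamma(F)$ is dominated by the tail mass $W_\gamma(F)$ arising from the regularity of $\bw(n)$. The remainder is routine algebra.
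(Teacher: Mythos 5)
Your argument is correct and follows the same skeleton as the paper's proof: split the difference of the two size-biased distribution functions into a numerator term and a normalisation term, control the normalisation via condition~2 of Definition~\ref{def:F-conv}, and convert the numerator difference into a pointwise comparison of $F^{(\ell,\gamma)}$ and $F$ via integration by parts for Lebesgue--Stieltjes integrals, landing on the same bound $2C_\gamma(F)\cdot 2(\gamma+W_\gamma(F)/C_\gamma(F))=4(\gamma C_\gamma(F)+W_\gamma(F))$. The one genuine structural difference is that you apply the size-biased identity directly to the indicator $\mathbf{1}_{\{y\le x\}}$, justified by the Radon--Nikodym description of the size-biased law, whereas the paper only has \eqref{eq:Size_biased} for bounded continuous $f$ and therefore first approximates the indicator by the smooth functions $h^{(t)}_x$ and passes to the limit $t\to\infty$ by dominated convergence; your shortcut is legitimate (a standard monotone-class extension) and removes a layer of limiting arguments, at the cost of having to be careful that the Stieltjes integral $\int_{[0,x]}y\,dF(y)$ really equals $\ex{W_F\mathbf{1}_{\{W_F\le x\}}}$, which your integration-by-parts formula handles correctly. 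Your closing observation that $\gamma C_\gamma(F)\le W_\gamma(F)$ (from $\PRO{W_F\ge C_\gamma(F)}\ge\gamma$, which follows from the infimum definition of $C_\gamma$) is not in the paper --- the paper instead appeals to \eqref{eq:convpr} --- and it gives a slightly cleaner form $\rho_1(\gamma)=c(W_\gamma(F)+\rho(\gamma))$ for the error; both routes rely in the end on the tail expectation $W_\gamma(F)$ vanishing, so nothing is lost or gained in generality.
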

\begin{proof}
Let $x \in [x_0,C_\gamma(F)] \setminus \mathcal{D}_\gamma$.
For $t > 0$ let
\begin{equation*}
	h^{(t)}(y) := h_x^{(t)} (y) :=
	\begin{cases}
		1- \exp((y-x)t), & y < x,\\
		0, & y \geq x
		\end{cases}.
\end{equation*}
We will use this function as a continuous approximation of $\mathbf{1}_{\{y < x \}}$.
Indeed, for any $y \in \mathbb{R}$ we have $h^{(t)}(y) \to \mathbf{1}_{\{ y < x \}}$ as $t \rightarrow \infty$. The dominated
convergence theorem yields for any $\gamma, \ell$
\begin{equation} \label{eq:t-convergence}
\begin{split}
\int_{0}^{\infty} h^{(t)}(y)  dG(y)  & \stackrel{t\rightarrow \infty}\rightarrow
\int_{0}^{\infty} \mathbf{1}_{\{ y < x \}}  dG (y) = G (x-),
\quad \text{where } G \in \{F^{*(\ell,\gamma)}, F^*\}.
\end{split}
\end{equation}
%If $C$ is a point of continuity of $F^*$and  of $F^{*(\ell,\gamma)}$, then in fact $F^*(C-) = F^* (C)$ and $F^{*(\ell,\gamma)} (C-)= F^{*(\ell,\gamma)} (C)$.
The definition of the size-biased random variable (cf.\ \eqref{eq:Size_biased}) implies that
\begin{equation*}
\begin{split}
\int_{0}^{\infty} h^{(t)}(y)  dF^{*(\ell,\gamma)} (y)  &= {\ex {U^{(\ell,\gamma)} h^{(t)} (U^{(\ell,\gamma)})} \over \ex {U^{(\ell,\gamma)}}}
\mbox{ and }
\int_{0}^{\infty} h^{(t)}(x)  dF^{*} (y) = {\ex {W_F  h^{(t)} (W_F )} \over d}.
\end{split}
\end{equation*}
%The following claim asserts that if $\ell$ is large enough depending on $\gamma$ only, then the distance between the above expressions is bounded by a vanishing function of $\gamma$, uniformly for all $t$.
%\begin{claim} \label{clm:h_approx}
We are going to show that there is $\gamma_1 >0$ such that for any $0 < \gamma < \gamma_1$,  any $\ell$ large enough depending only on $\gamma$  and any $t>0$ we have: for $x \in [x_0, C_\gamma] \setminus \mathcal{D}_\gamma$
\begin{equation}
\label{eq:h_approx}
\left| {\ex {W_F  h^{(t)} (W_F )}  \over d} - {\ex {U^{(\ell,\gamma)}  h^{(t)} (U^{(\ell,\gamma)} )}
\over \ex {U^{(\ell,\gamma)}} } \right| < {2 \over d} \big( \rho (\gamma ) +  4(\gamma C_\gamma(F)  + W_\gamma(F) ) \big).
\end{equation}
%\end{claim}
With this fact at hand the proof of Lemma~\ref{lem:size_biased_approx} can be completed as follows. By \eqref{eq:t-convergence} we can choose $t$ large enough so that, say,
\begin{equation*}
\begin{split}
\left|\int_{0}^{\infty} h^{(t)}(y)  dG(y) -
\int_{0}^{\infty} \mathbf{1}_{\{ y < x \}}  dG(y)\right| < \rho (\gamma),
\quad \text{where } G \in \{F^{*(\ell,\gamma)}, F^*\}.
\end{split}
\end{equation*}
Thus, by~\eqref{eq:h_approx} and the triangle inequality we obtain
$$ | F^{*(\ell,\gamma)} (x) - F^* (x) | < 2 \rho (\gamma ) + {2 d^{-1}} \left( \rho (\gamma ) +  4(\gamma C_\gamma  +
W_\gamma ) \right) =: \rho_1 (\gamma).$$
Note that $\rho_1 \to 0$ when $\gamma\to0$, due to~\eqref{eq:convpr}. This completes the proof of Lemma~\ref{lem:size_biased_approx}. We finish with the proof of~\eqref{eq:h_approx}.
We write
\begin{equation} \label{eq:expectations_together}
\begin{split}
\ex {W_F  h^{(t)} (W_F )} = \int_{0}^{x } y h^{(t)} (y) dF(y)
\mbox{ and }
 \ex {U^{(\ell,\gamma)}  h^{(t)} (U^{(\ell,\gamma)} )} =
\int_{0}^{x} y h^{(t)} (y) dF^{(\ell,\gamma)}(y).
\end{split}
\end{equation}
Note that $y h_x^{(t)} (y)$ is differentiable (and therefore continuous) for any $x_0 \leq y < x$ and the modulus of
its derivative is bounded in this interval. Hence, it has bounded total variation. These facts allow us to use the integration-by-parts
formula for the Lebesque-Stieltjes integral. Abbreviating $g(y) = y h^{(t)} (y)$, since $F$ is right-continuous, we obtain
\begin{equation} \label{eq:1st_Int}
\begin{split}
\int_{0}^{x} g(y) dF(y)
& =  F(x) x  h^{(t)} (x) - 0 \cdot h^{(t)} (0) F(0-) - \int_{0}^{x} F(y) d g (y) \\
&= F(x) x  h^{(t)} (x) - \int_{0}^{x} F(y) d g (y).
\end{split}
\end{equation}
Similarly, we obtain
\begin{equation} \label{eq:2nd_Int}
\begin{split}
\int_{0}^{x}  g (y) dF^{(\ell,\gamma)}(y) =
 F^{(\ell,\gamma)} (x) x  h^{(t)} (x ) -
\int_{0}^{x } F^{(\ell,\gamma)}(y) d g (y).
\end{split}
\end{equation}
The first part of Definition~\ref{def:F-conv} implies that if $0 < \gamma < \gamma_0$ and $\ell > L_1 (\gamma)$, then %since both $F^{(\ell,\gamma)}, F$ are right-continuous and $C$ is a point of continuity of $F^{(\ell,\gamma)}$ and $F$ we have
$$
%|F^{(\ell, \gamma)} (C+) - F(C+)| =
|F^{(\ell, \gamma)} (x) - F(x)| < 2(\gamma + W_\gamma(F) / C_\gamma(F)). $$
Let us abbreviate $y(\gamma)= 2(\gamma + W_\gamma / C_\gamma)$. With this notation,~\eqref{eq:expectations_together}, \eqref{eq:1st_Int} and \eqref{eq:2nd_Int} together yield
\begin{equation*}
\label{eq:diffnumer}
\begin{split}
	\left| \ex {W_F  h^{(t)} (W_F )} - \ex {U^{(\ell,\gamma)}  h^{(t)} (U^{(\ell,\gamma)} )} \right|
	& \le
	y(\gamma) xh^{(t)}(x) + \int_{0}^{x} |F(y) - F^{(\ell,\gamma)} (y) |  d g (y) \\
	& \le
	y(\gamma) C_\gamma(F) + y(\gamma) (g(x) - g(0)) \\
	& \le
	2y(\gamma)C_\gamma(F).
\end{split}
\end{equation*}
The second part of Definition~\ref{def:F-conv} implies that for any $\ell$ large enough (depending only on $\gamma$)
\begin{equation*} \label{eq:den}
\left| \ex {U^{(\ell,\gamma)}} - d \right| < \rho (\gamma).
\end{equation*}
Using that for $a,b,c,d > 0$ we have that $|\frac{a}b - \frac{c}d| = |\frac{ad-cb}{bd}|\le \frac{|a-c|}{\min\{b,d\}} + \frac{a|b-d|}{bd}$ we get the estimate
\begin{equation*}
\begin{split}
& \left| {\ex {W_F  h^{(t)} (W_F )}  \over d} - {\ex {U^{(\ell,\gamma)}  h^{(t)} (U^{(\ell,\gamma)} )}
\over \ex {U^{(\ell,\gamma)}} } \right| \\
 \le & \frac{\left| \ex {W_F  h^{(t)} (W_F )} - \ex {U^{(\ell,\gamma)}  h^{(t)} (U^{(\ell,\gamma)} )} \right|}{ \min\{d,\ex {U^{(\ell,\gamma)}}\}} + \frac{\ex {W_F  h^{(t)} (W_F )}\left |d- \ex {U^{(\ell,\gamma)}}\right|}{d \ex {U^{(\ell,\gamma)}}}.
\end{split}
\end{equation*}
Let $0 < \gamma_1 \le \gamma_0$ be such that for any $0 < \gamma  < \gamma_1$ we have $\rho(\gamma) < d/2$. Then for all such $\gamma$ the first term in the previous expression is bounded by $4y(\gamma)C_\gamma(F)/d$ and since $\ex {W_F  h^{(t)} (W_F )} \le d$ the second term is bounded by $2\rho(\gamma)/d$; \eqref{eq:h_approx} follows.
\end{proof}
For technical reasons we consider a slightly different definition of the random graph model that we denote by $CL' ( \bW^{(\ell, \gamma)} )$. In this modified model the edge probabilities are proportional
to the product of the weights of the vertices, except that the normalizing factor is not equal to the sum of the weights in
$\bW^{(\ell, \gamma)}$, but it is equal to $W_{[n]}(\bw(n))$, that is, the edge $\{i,j\}$ is contained in $CL' ( \bW^{(\ell, \gamma)} )$ with probability
\[
	p_{ij}(\bW^{(\ell, \gamma)}(n), \bw(n)) = \min\left\{ \frac{w_i^{(\ell,\gamma)} w_i^{(\ell,\gamma)}}{W_{[n]}(\bw)} ,1\right\}.
\]
%Recall that $W_{[n]}= dn(1+o(1))$.
The next theorem quantifies the number of the finally infected vertices when the weight sequence is a discretisation of a given regular $(\bw(n))_{n \ge 1}$. It is general enough so that it can be used in the proof of Theorem~\ref{thm:power_law} as well.
\begin{theorem} \label{thm:FinalInfectionI}
Let $r \geq 2$, $\gamma \in (0,1)$. Let $(\bw(n))_{n \ge 1}$ be regular and let $F$ be its limiting distribution function. 
There exists $\gamma_2>0$ such that for $\gamma\in(0,\gamma_2)$  and for any $\delta  \in (0,1)$ 
there is a subsequence $\mathcal{S} :=\{\ell_k\}_{k\in \mathbb{N}}$ with the property that for any $\ell \in  \mathcal{S}$ 
the following holds.
Let $( (\bW^{(\ell, \gamma)}(n))_{n\ge 1})_{\ell \in \mathbb{N}}$ be $F$-{convergent with error $\rho$}. Moreover, assume that $f_r'(\hat{y};W_F^*,p) <0$ (cf. Theorem~\ref{thm:FinalInfection}).

Assume that initially all vertices of $CL' (\bW^{(\ell, \gamma)})$ that belong to
$\cC_{\gamma}'(n)$ are infected, whereas each vertex in $\cC_i(n)$ is infected independently with probability $p\in [0,1)$, for each $i=1,\ldots, p_\ell$. Let $\Af^{(\ell,\gamma)}$ denote the set of vetices in $[n'] \setminus \cC_{\gamma}'(n)$ that become eventually infected during a bootstrap percolation process with activation threshold $r$. 
Then with probability $1-o(1)$
$$ n^{-1}{|\Af^{(\ell,\gamma)}| } = (1\pm \delta) \left( (1-p) \ex{W_F \hat{y}} + p \right). $$
%  \left( (1-p)\sum_{i=1}^{p_\ell}\gamma_i
% \psi_r \left(W_i \hat{y}_{\ell,\gamma}\right)+p~( 1-\gamma )  + \gamma' \right), $$
% and $\hat{y}_{\ell,\gamma}$ is the
% smallest positive solution of
% $$ y = (1-p) \sum_{i=1}^{p_\ell} {W_i \gamma_i \over d}\psi_r \left(W_i y \right)  +
% p~{1\over d}~\sum_{i=1}^{p_\ell}W_i \gamma_i + {W_{\gamma}' \over d}.$$
%(cf. (\ref{eq:probs}) for the definition of the $\gamma_i'$s.)
\end{theorem}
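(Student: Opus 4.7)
The plan is to analyse the process via a sequential edge-exposure procedure, convert the evolution into a system of ODEs via the differential equation method of Wormald~\cite{Worm95,Worm99}, and then pass from the finitary fixed point to the target one using Lemma~\ref{lem:size_biased_approx}. Since the discretised weight sequence takes only the finitely many values $W_1,\ldots,W_{p_\ell}$ on $\cup_i \cC_i(n)$, together with values in $[C_\gamma(F),2C_\gamma(F)]$ on the small set $\cC_\gamma'(n)$, the state of the process can be encoded by $O(p_\ell)$ scalar variables. Concretely, mark every initially infected vertex as \emph{active}: this includes all of $\cC_\gamma'(n)$ plus a $p$-fraction of each class $\cC_i(n)$. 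At each step pick one active vertex $v$, reveal every potential edge incident to $v$ using the $CL'$ probabilities $p_{vu} = w_v^{(\ell,\gamma)} w_u^{(\ell,\gamma)}/W_{[n]}(\bw)$ independently across edges, and mark $v$ processed. For each uninfected $u$, maintain a counter $X_u$ of revealed infected neighbours; once $X_u$ reaches $r$, declare $u$ infected and push it onto the active stack. The process stops when the active stack empties. For each $i$ and $k\in\{0,\ldots,r-1\}$, let $S_{i,k}(t)$ count the uninfected $u \in \cC_i(n)$ with $X_u = k$ after $t$ steps, and let $A(t)$ count the active vertices.

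When an active vertex of weight $W$ is processed, each uninfected $u \in \cC_j$ with counter $k$ has its counter incremented independently with probability $W \cdot W_j / W_{[n]}(\bw)$, triggering a new infection precisely when $k = r-1$. Since all weights are bounded by $2C_\gamma(F)$, the conditional one-step drifts of the scaled variables $S_{i,k}(t)/n$ are Lipschitz in the state and the per-step jumps are $O(1)$. Reparameterising by the accumulated processed weight $y(t) := (\text{total weight of processed vertices})/W_{[n]}(\bw)$ decouples the system through this single scalar: an uninfected $u \in \cC_j$ has counter distributed asymptotically as $\Po(W_j y)$, so Wormald's theorem shows that with probability $1-o(1)$ the rescaled variables remain within $o(1)$ of the deterministic trajectory up to the stopping time $\hat{\tau}$ at which the ODE analogue of $A$ first hits zero. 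A direct computation, using $\#\{\text{active}\} = \#\{\text{infected}\} - \#\{\text{processed}\}$ together with the Poisson description of each counter, identifies $\hat{y}:=y(\hat{\tau})$ as the smallest positive solution of the finitary fixed-point equation $y = p + (1-p)\mathbb{E}[\psi_r(U^{*(\ell,\gamma)} y)]$. Consequently the terminal infected fraction in $\cC_i$ is $p + (1-p)\psi_r(W_i \hat{y}) + o(1)$, and summing over classes with $|\cC_i|/n = (1+o(1))\gamma_i$ gives $n^{-1}|\Af^{(\ell,\gamma)}| = (1+o(1))\big(p + (1-p)\mathbb{E}[\psi_r(U^{(\ell,\gamma)} \hat{y}^{(\ell,\gamma)})]\big)$. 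Lemma~\ref{lem:size_biased_approx} and Definition~\ref{def:F-conv}(2) then transfer this to within $\delta$ of $(1-p)\mathbb{E}[\psi_r(W_F\hat{y})]+p$ for $\gamma$ small and $\ell$ large.

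The chief obstacle is to apply Wormald's method \emph{up to} the stopping time $\hat{\tau}$, which requires that the deterministic $A$-trajectory cross zero transversally there. This is exactly where the hypothesis $f_r'(\hat{y};W_F^*,p) < 0$ enters: combined with Lemma~\ref{lem:size_biased_approx} it yields the analogous transversality $f_r'(\hat{y}^{(\ell,\gamma)}; U^{*(\ell,\gamma)}, p) < 0$ for the finitary fixed point, valid for $\gamma$ small and for $\ell$ restricted to a suitable subsequence $\mathcal{S}$ chosen by a diagonal argument around the countable discontinuity set $\mathcal{D}_\gamma$. Transversality ensures that the random stopping time concentrates on its deterministic counterpart, so Wormald's theorem applies through $\hat{\tau}$. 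A secondary issue is the discrepancy between the $CL'$ normalising factor $W_{[n]}(\bw)$ and the actual sum $W_{[n]}(\bW^{(\ell,\gamma)})$; by Definition~\ref{def:FinitarySequence}(4)--(5) and Definition~\ref{def:F-conv}(2) their ratio tends to $1+O(\rho(\gamma) + \gamma + W_\gamma(F)/C_\gamma(F))$, which vanishes as $\gamma \to 0$ in view of~\eqref{eq:convpr} and is absorbed into the $\delta$ tolerance.
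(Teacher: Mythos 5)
Your overall strategy---sequential exposure, Wormald's differential-equation method, and a transfer from the finitary fixed point to $\hat y$ via Lemma~\ref{lem:size_biased_approx}---is the same as the paper's, but the decisive steps are asserted rather than proved, and the first of these is a genuine gap. Wormald's theorem only gives $Y_l(t)=n z_l(t/n)+o_p(n)$ while the rescaled trajectory stays inside an open domain on which the drifts are Lipschitz and bounded; here the drifts involve the ratio $w_{\cU}/u$ (the average weight of the active set), which is not controlled as the active set empties, and in any case an $o_p(n)$ error is useless once the quantity at stake (the residual active population and the further infections it seeds) is itself of order $\epsilon n$ or smaller. So you cannot run Wormald ``through'' the extinction time, and the sentence ``transversality ensures that the random stopping time concentrates on its deterministic counterpart'' is not a proof. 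The paper instead stops at the exit time $T_{D_\epsilon}$ of a domain on which $w_{\cU}/n$ is still of order $\epsilon$, translates the derivative condition $\mu_{\cU}'(\hat\tau^{(\ell,\gamma)})<0$ into the subcriticality bound $\sum_i W_i^2\, c_{i,r-1}/W_{[n]}<\kappa_0<1$, and then runs a substantial endgame argument: stochastic domination of the remaining process by a multi-type branching process with a drifting progeny matrix, concentration of the functional $Z_t=(\xi,\overline{u}_t)$ via Talagrand's inequality, control of the leading eigenvalue $\rho_t$ along the way (Claims~\ref{clm:conc_proc}--\ref{clm:eigenboundfinal}), and extinction within $O(\log n)$ further rounds with only an $O(\epsilon)$-fraction of extra infections, using the Athreya--Ney asymptotics. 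Your proposal contains no substitute for this part, and without it the claimed value of $n^{-1}|\Af^{(\ell,\gamma)}|$ does not follow.

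A second, smaller issue concerns the fixed point itself. You assert that since the distribution functions are close (Lemma~\ref{lem:size_biased_approx}), the smallest positive root of the finitary equation is within $\delta$ of $\hat y$. Uniform closeness of the functions does not by itself give closeness of \emph{smallest} roots: a priori the finitary equation could have a spurious root well below $\hat y$ at which the process stalls. The paper's Propositions~\ref{prop:cts_approx} and~\ref{prop:roots_approx} address exactly this, via equicontinuity and an Arzel\`a--Ascoli subsequence, the sign change at $\hat y\pm\delta_0$ guaranteed by $f_r'(\hat y;W_F^*,p)<0$, and---crucially---the strictly positive term $W_\gamma'/d$ contributed by the initially infected heavy class $\cC_\gamma'(n)$, which rules out roots near $0$. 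Your finitary fixed-point equation $y=p+(1-p)\EE[\psi_r(U^{*(\ell,\gamma)}y)]$ omits this $W_\gamma'/d$ term, i.e.\ precisely the ingredient used to exclude the degenerate small root, so this transfer step needs to be carried out along the paper's lines (or by an equivalent argument) rather than cited as automatic.
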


\subsection{Proof of Theorem~\ref{thm:FinalInfection}}

Given a regular $(\bw(n))_{n \ge 1}$, 
Theorem~\ref{thm:FinalInfection} follows from Theorem~\ref{thm:FinalInfectionI} by constructing an $F$-convergent sequence $( (\bW^{(\ell, \gamma)}(n))_{n\ge 1})_{\ell \in \mathbb{N}}$. We first describe our construction and prove some properties of it, and then proceed with the proofs of our main results.

\subsubsection{The construction of approximating weight sequences}

Let $(\mathbf{w}(n))_{n \ge 1}$ be regular and consider the limiting distribution function $F$. For $\gamma \in (0,1)$, recall that if
$\gamma \in F([0,\infty))$, then $F(C_{\gamma}) = 1 - \gamma$, where $C_\gamma = C_\gamma (F)$. 
We assume that $\gamma$ has this property.
Recall also that from Definition~\ref{def:regcond} there is a positive real number $x_0$ such that $F(x) = 0$ for  $x < x_0$.
For any $x>0$ the symbol $F(x+)$ ($F(x-)$, respectively) will denote the right (left, resp.) limit of $F$ at $x$. Of course, $F$ is 
right-continuous and, therefore, $F(x+)=F(x)$. 
%For any natural number $\ell$, we consider the partition of the interval $[x_0, C_\gamma)$ into
%half-open intervals that is indexed by $\ell$.
%We will approximate $F(x)$ by a sequence of distribution
%functions indexed by $\ell \in \mathbb{N}$ that are piecewise constant within each part of the above partition.

Let $d_1 < d_2 < \cdots$ be the set of discontinuities of $F$ in 
$[x_0, C_\gamma)$ -- this is a countable set (possibly finite).
These $d_i$s incur a natural partition of $[x_0 , C_\gamma )$ into half-open intervals
$D_i:= [d_i,d_{i+1})$ for $i\ge0$, where $d_0 :=x_0$. Let $c_i := d_{i+1} - d_i + F (d_{i+1}) - F(d_{i+1}-) $. In other words,
$c_i$ is the length of the interval $D_i$ together with the magnitude of the $(i+1)$st discontinuity.
Let $c_{i_1} \geq c_{i_2} \geq \cdots$ be the ordering of the $c_i$s  according to their
size and let $k_j$ be the $j$th largest size that appears in this ordering.
We set $K_j := \{ i  \ : \ c_i = k_j \}$ -- this is the set of indices $i$ such that $c_i$ has the $j$th largest size in the above ordering.
Note that $\sum_j k_j ~|K_j| < \infty$.

For any $\ell \in \mathbb{N}$, consider the set of indices $\cup_{j=1}^\ell K_j$; we assume that these are
$j_1 < \cdots < j_{s_\ell}$.
Consider now the partition $$\mathcal{P}_\ell' := \{ [x_0,d_{j_1}), [d_{j_1},d_{j_1+1}), [d_{j_1+1}, d_{j_2}), \ldots,
[d_{j_{s_\ell+1}},C_\gamma) \}.$$
Let $L=L(\ell)$ be the minimum natural number such that if we further partition each $[d_{j_i},d_{j_i+1})$ in $L$ equal half-open intervals
$[d_{j_i}=y_0, y_1),[y_1,y_2),\ldots, [y_{L-1},y_{L}=d_{j_i+1})$, then
$F(y_{j+1}) - F(y_i) < 1/ \ell$.
Also, note that for the remaining parts the quantities $F(d_{j_1}-) - F(x_0), F(d_{j_2}-) - F(d_{j_1+1}), \ldots, F(C_\gamma-) -
F(d_{j_{s_\ell} + 1})$ are bounded by $\sum_{j > \ell} k_j~|K_j|$.
We let $\mathcal{P}_\ell$ be the refinement of $\mathcal{P}_\ell'$, where we include the above parts for each interval
$[d_{j_i},d_{j_i+1})$. Let $p_\ell$ be the total number of parts in $\mathcal{P}_\ell$ and let
$I_i := [W_i^-, W_i^+)$, for $i=1,\ldots, p_\ell$, denote the $i$th part (note that each part is an interval).
We let
\begin{equation*}
	\eps_\ell : = \max \left\{1/\ell, \sum_{j > \ell} k_j~|K_j|\right\}.
\end{equation*}
This quantity bounds $F(W_i^+) - F(W_i^-)$, that is, for all $i=1,\ldots, p_\ell$
\begin{equation}
\label{eq:eps_def}
F(W_i^+) - F(W_i^-) < \eps_\ell.
\end{equation}

Given this partition and the weight sequence $\bw (n)$, for each $n \geq 1$ we define two (discretised) weight sequences
$\bW^{(\ell, \gamma)+} (n')$ and $\bW^{(\ell,\gamma)-} (n'')$ on the sets $[n']$ and $[n'']$, respectively, as follows. The partition
$\mathcal{P}_\ell$ gives rise to a partition of $[n] \setminus \cC_{\gamma}$, where for each $i =1,\ldots, p_\ell$ we have
$\cC_i = \{j \ : \ w_j(n) \in I_i \}$.
We denote this partition by $\cP_{n,\ell,\gamma}$ and we let this be the associated partition of
$\bW^{(\ell,\gamma)+} (n')$  and $\bW^{(\ell, \gamma)-} (n'')$. In particular,

\medskip

\noindent
-- for each $i = 1,\ldots, p_\ell$ and for each $j \in \cC_i$, we set
$$W_j^{(\ell, \gamma)-} (n) := W_i^-, \ \mbox{and} \ W_j^{(\ell, \gamma)+} (n') := W_i^+.$$

\medskip

\noindent
-- consider the random subset of $\cC_\gamma$, in which every element of $\cC_\gamma$ is included independently with probability $p$.
An application of the Chernoff bounds implies that a.a.s. this has size at least $\lfloor p|\cC_\gamma| - n^{2/3} \rfloor=:k_-$.
Consider a set of vertices $\cC_\gamma^-=\{ v_1,\ldots, v_{k_-} \}$ which is disjoint from $[n]$.
We identify with $[n'']$ the set $\left( \cup_{i=1}^{p_\ell} \cC_i \right)  \bigcup \cC_\gamma^-$, with the assumption that those vertices
which belong to $\cup_{i=1}^{p_\ell} \cC_i$ retain their labels.  It follows that $n'' = (1-\gamma + p\gamma)n (1+o(1))$.

For any $j \in \cC_\gamma^- = [n''] \setminus \cup_{i=1}^{p_\ell} \cC_i$ we set  $W_j^{(\ell, \gamma)-} (n) := C_\gamma$.
Note that
$$ \lim_{n \rightarrow \infty} {|\cC_\gamma^-|\over n} = p \gamma,$$
and if $W_{\cC_\gamma^-} (\bW^{(\ell,\gamma)-})$ denotes the total weight of these vertices, then this satisfies
$$ \lim_{n \rightarrow \infty} {W_{\cC_\gamma^-} (\bW^{(\ell,\gamma)-})  \over n} = p\gamma C_\gamma =:W_\gamma^-. $$

\medskip

\noindent
-- for any vertex $j \in \cC_\gamma$ such that $w_j(n) \geq 2 C_\gamma$ we consider $r_j:=2 \lfloor {w_j(n) \over C_\gamma}\rfloor$
copies of this vertex each having weight $2C_\gamma$, which we label as $v_{j1},\ldots, v_{jc_j}$.
For each such $j$ we let $\eps_j(n)= {w_j(n) \over C_\gamma} - \lfloor {w_j (n) \over C_\gamma} \rfloor$ and
we set $R = \lceil 2 \sum_{j \ : \ w_j(n) \geq 2 C_\gamma} \eps_j (n) \rceil$.
If $j \in \cC_\gamma$ is
such that $C_\gamma \leq  w_j(n) < 2 C_\gamma$, then we introduce a single copy $v_{j1}$ having weight equal to $w_j$ (in other
words $r_j=1$).

We let $\cC_\gamma^+$ be the set that is the union of these copies together with a set of $R$ vertices which we denote by
$\mathcal{R}$ (disjoint from the aforementioned sets) each having weight $2 C_\gamma$:
$$ \cC_\gamma^+ := \mathcal{R} \cup \bigcup_{j \in \cC_\gamma} \{v_{j1},\ldots, v_{jr_j} \}. $$
Let $n' = \left| \left( \cup_{i=1}^{p_\ell} \cC_i \right)  \bigcup \cC_\gamma^+ \right|$ and identify the set $[n']$ with the vertices
in $\left( \cup_{i=1}^{p_\ell} \cC_i \right)  \bigcup \cC_\gamma^+$, under the assumption that the vertices in
$\cup_{i=1}^{p_\ell} \cC_i$ retain their labels.
We will use the symbol $\cC_\gamma^+$ to denote the set $[n'] \setminus \left( \cup_{i=1}^{p_\ell} \cC_i \right)$. In other words, the
set  $\cC_\gamma^+$ consists of the replicas of the vertices in $\cC_\gamma$, as these were defined above, together with the set of
vertices corresponding to $\mathcal{R}$. This completes the definition of $\bW^{(\ell, \gamma)+} (n')$.

Note that
\begin{equation*}
\begin{split}
|\cC_\gamma^+| &=\sum_{j \ : \  C_\gamma \leq w_j <  2C_\gamma} 1 +  \sum_{j \ : \ w_j \geq  2C_\gamma }
2 \lfloor  {w_j \over C_\gamma} \rfloor +  R \\
&= \sum_{j \ : \  C_\gamma \leq w_j <  2C_\gamma} 1 + 2 \sum_{j \ : \ w_j \geq  2C_\gamma } {w_j \over C_\gamma}  + e(n),
%& \leq |\cC_\gamma| + 2 {W_{\cC_\gamma}(\mathbf{w}) \over  C_\gamma}.
\end{split}
\end{equation*}
with $0\leq e(n) < 1$,
whereby it follows that as $n \rightarrow \infty$
\begin{equation} \label{eq:n'_bound}
 {|\cC_\gamma^+| \over n} \rightarrow \PRO{C_\gamma \leq W_F < 2C_\gamma} + 2 {\ex{\mathbf{1}_{\{ W_F \geq 2C_\gamma \}}W_F}
\over C_\gamma}=:\gamma^+
< \gamma + 2 {W_\gamma \over C_\gamma},
\end{equation}
% Also, the total weight of the vertices in $\cC_\gamma^+$ can be bounded as follows.
% \begin{equation*}
% \begin{split}
% W_{\cC_\gamma^+} (\bW^{(\ell,\gamma)+}) &= \sum_{j \ : \  C_\gamma \leq w_j <  2C_\gamma} w_j +
% \sum_{j \ : \ w_j \geq  2C_\gamma } 2 \lfloor  {w_j \over C_\gamma} \rfloor (2 C_\gamma) \\
% &\leq W_{\cC_\gamma}(\mathbf{w}) + 4 \sum_{j \ : \ w_j \geq  2C_\gamma } w_j \leq 5 W_{\cC_\gamma} (\mathbf{w}).
% \end{split}
% \end{equation*}
Hence, as $n \rightarrow \infty$
\begin{equation} \label{eq:weight_bound}
\begin{split}
{W_{\cC_\gamma^+} (\bW^{(\ell,\gamma)+})  \over n} \rightarrow
& \ex{\mathbf{1}_{\{ C_\gamma \leq W_F < 2C_\gamma \}}W_F} + 4 \ex{\mathbf{1}_{ \{ W_F \geq 2C_\gamma \} }W_F}
=:W_\gamma^+ \\
& \leq \ex{\mathbf{1}_{\{ C_\gamma \leq W_F \} }W_F} + 4 \ex{\mathbf{1}_{ \{ W_F \geq C_\gamma \}}W_F } =  5 W_\gamma.
\end{split}
\end{equation}

We denote by $U_{n}^{(\ell,\gamma)-}$ and $U_{n'}^{(\ell, \gamma)+}$ the weight in $\bW^{(\ell,\gamma)-} (n)$  and
$\bW^{(\ell, \gamma)+} (n')$ of a uniformly chosen vertex from $[n]$ and $[n']$, respectively.
Also, we let $F_n^{(\ell, \gamma)-}, F_{n'}^{(\ell, \gamma)+}$  denote their distribution functions.
Note that both $F_n^{(\ell, \gamma)-}, F_{n'}^{(\ell, \gamma)+}$ converge pointwise as $n\rightarrow \infty$
to the functions $F^{(\ell, \gamma)-}, F^{(\ell, \gamma)+}$, respectively, where

\medskip

\noindent
--for each $i = 1,\ldots, p_\ell$ and for each $x \in I_i$ which is a point of continuity of $F$, we set
$$F^{(\ell, \gamma)-} (x) := {F(W_i^-)\over 1-\gamma + p \gamma}, \ \mbox{and} \ F^{(\ell, \gamma)+} (x) =
 {F(W_i^+) \over 1- \gamma + \gamma^+}. $$

\medskip

\noindent
-- for any $x\geq C_\gamma$ we have $F^{(\ell,\gamma)-}(x) = 1$;

\medskip
\noindent
-- for any $C_\gamma \leq x < 2C_\gamma$ which is a point of continuity of $F$ we have
\begin{equation} \label{eq:tail_value} F^{(\ell,\gamma)+}(x) = {F( x ) \over 1-\gamma+ \gamma^+},\end{equation} whereas for
$x\geq 2 C_\gamma$ we have $F^{(\ell,\gamma)+}(x) = 1$.

We will now verify that both weight sequences are $F$-convergent with a certain error $\rho$, which we give explicitly.
For any $x \in I_i$ we have
\begin{equation} \label{eq:F_approx}
\begin{split}
& \left| F^{(\ell,\gamma)+}(x) - F(x) \right| < \left| {F(W_i^+) \over 1- \gamma + \gamma^+} - F(W_i^-) \right| \\
&= \left| {F(W_i^+) \over 1- \gamma + \gamma^+}  - {F(W_i^-) \over 1- \gamma + \gamma^+}
+ {F(W_i^-) \over 1- \gamma + \gamma^+} - F(W_i^-) \right| \\
& \leq \left|  {F(W_i^+) - F(W_i^-) \over 1- \gamma + \gamma^+}  \right|
+ \left| {F(W_i^-) \over 1- \gamma + \gamma^+} - F(W_i^-)  \right| \\
&\stackrel{(\ref{eq:eps_def})}{\leq} 
{\eps_\ell \over 1 - \gamma + \gamma^+} + F(W_i^-) {\gamma^+ - \gamma \over 1- \gamma + \gamma^+} \leq
{\eps_\ell \over 1 - \gamma + \gamma^+} +  {\gamma^+ - \gamma \over 1- \gamma + \gamma^+} <
{3\over 2} {\gamma^+ - \gamma \over 1- \gamma + \gamma^+},
\end{split}
\end{equation}
for any $\ell$ sufficiently large (depending on $\gamma$ only).
Similarly, for any $\ell$ sufficiently large (depending on $\gamma$) and $x \in I_i$ we have
\begin{equation}\label{eq:F_approx-}
\begin{split}
\left| F^{(\ell,\gamma)-}(x) - F(x) \right| < {\left| F(W_i^+)  - F(W_i^-) \right|\over 1- \gamma +p \gamma } 
\stackrel{(\ref{eq:eps_def})}{<}
{\eps_\ell \over 1 - \gamma+ p\gamma} <
{3\over 2} {\gamma^+ - \gamma \over 1- \gamma + \gamma^+}.
\end{split}
\end{equation}
Furthermore, since $F^{(\ell,\gamma)+}$ is constant (and equal to 1) for $x \geq 2 C_\gamma$ we have
\begin{equation*}
\begin{split}
 \int_{0}^{\infty}x dF^{(\ell,\gamma)+}(x) = \int_{0}^{2C_\gamma}x dF^{(\ell,\gamma)+}(x)
+ \int_{2C_\gamma}^{\infty}x dF^{(\ell,\gamma)+}(x) = \int_{0}^{2C_\gamma}x dF^{(\ell,\gamma)+}(x).
\end{split}
\end{equation*}
Using the integration-by-parts formula for the Lebesque-Stieltjes integral we can write the latter as
\begin{equation} \label{eq:expansion}
\begin{split}
\int_{0}^{2C_\gamma}x dF^{(\ell,\gamma)+}(x) &= 2C_\gamma F^{(\ell,\gamma)+}(2C_\gamma+) - 0 \cdot
F^{(\ell,\gamma)+}(0-)  -
\int_{0}^{2C_\gamma} F^{(\ell,\gamma)}(x)dx \\
& = 2C_{\gamma} - \int_{0}^{2C_\gamma} F^{(\ell,\gamma )+}(x)dx.
\end{split}
\end{equation}
We will approximate the above integral using (\ref{eq:F_approx}). For $\ell$ large enough we have
\begin{equation} \label{eq:temp_approx_I}
\begin{split}
&\left|\int_{0}^{2C_\gamma} F^{(\ell,\gamma )+}(x)dx - \int_{0}^{2C_\gamma} F (x)dx \right| \leq \\
&\left| \int_{0}^{C_\gamma} F^{(\ell,\gamma )+}(x)dx - \int_{0}^{C_\gamma} F (x)dx \right|
+ \left|\int_{C_\gamma}^{2C_\gamma} F^{(\ell,\gamma )+}(x)dx - \int_{C_\gamma}^{2C_\gamma} F (x)dx  \right| \\
&\leq \int_{0}^{C_\gamma}\left|F^{(\ell,\gamma)+}(x) - F(x) \right|dx +
\int_{C_\gamma}^{2C_\gamma}\left|F^{(\ell,\gamma)+}(x) - F(x) \right|dx \\
&\stackrel{(\ref{eq:F_approx}), (\ref{eq:tail_value})}{\leq} {3\over 2} {\gamma^+ - \gamma \over 1- \gamma + \gamma^+} C_\gamma
+ {\gamma^+ - \gamma \over 1- \gamma + \gamma^+} C_\gamma =
{5 \over 2} {\gamma^+ - \gamma \over 1- \gamma + \gamma^+} C_\gamma \\
& < 3 \gamma^+ C_\gamma \stackrel{(\ref{eq:n'_bound})}{<}  3\gamma C_\gamma + 6 W_\gamma.
\end{split}
\end{equation}
Applying again the integration-by-parts formula for the Lebesque-Stieltjes integral we have
\begin{equation} \label{eq:temp_approx_II}
\begin{split}
\int_{0}^{2C_\gamma} F (x)dx &= 2C_\gamma F(2C_\gamma+) - 0\cdot F(0-) - \int_{0}^{2C_\gamma} x dF(x) \\
&= 2C_\gamma (1- \PRO {W_F > 2C_\gamma})  - \int_{0}^{2C_\gamma} x dF(x).
\end{split}
\end{equation}
Hence (\ref{eq:temp_approx_I}) and (\ref{eq:temp_approx_II}) imply that
\begin{equation*}
\begin{split}
\left| \int_{0}^{2C_\gamma} F^{(\ell,\gamma )+}(x)dx + \int_{0}^{2C_\gamma} x dF(x) - 2 C_\gamma \right| <
2C_\gamma \PRO {W_F > 2 C_\gamma} + 3 \gamma C_\gamma + 6 W_\gamma,
\end{split}
\end{equation*}
whereby using (\ref{eq:expansion}) we have
\begin{equation*}
\begin{split}
\left| \int_{0}^{2C_\gamma}x dF^{(\ell,\gamma)+}(x) - \int_{0}^{2C_\gamma} x dF(x)  \right| <
2C_\gamma \PRO {W_F > 2 C_\gamma} + 3 \gamma C_\gamma + 6 W_\gamma.
\end{split}
\end{equation*}
But also
$$ \left| \int_{0}^{2C_\gamma} x dF(x)  - \int_{0}^{\infty} x dF(x)   \right|  \leq \int_{2C_\gamma}^{\infty} x dF(x) =
\ex {\mathbf{1}_{W_F > 2 C_\gamma} W_F } . $$
But $\int_{0}^{\infty} x dF(x) = d$ and therefore,
\begin{equation*}
\begin{split}
\left| \int_{0}^{2C_\gamma}x dF^{(\ell,\gamma)+}(x) - d \right| & <\\
 &2C_\gamma \PRO {W_F > 2 C_\gamma} + 3 \gamma C_\gamma + 6 W_\gamma +\ex {\mathbf{1}_{W_F > 2 C_\gamma} W_F}.
\end{split}
\end{equation*}
We set
$$\rho (\gamma ) : =  2C_\gamma \PRO {W_F > 2 C_\gamma} + 3 \gamma C_\gamma + 6 W_\gamma +\ex {\mathbf{1}_{W_F > 2 C_\gamma} W_F} . $$
Using similar estimates (cf.~(\ref{eq:F_approx-})), we can also show that
$$ \left| \int_{0}^{2C_\gamma}x dF^{(\ell,\gamma)-}(x) - d \right|  <\rho (\gamma).  $$
The above findings can be summarized in the following lemma.
\begin{lemma}\label{lem:size_biased}
As $n\rightarrow \infty$, we have $$U_{n}^{(\ell,\gamma)-} \stackrel{d}{\rightarrow} U^{(\ell, \gamma)-} \ \mbox{and} \
U_{n'}^{(\ell, \gamma)+} \stackrel{d}{\rightarrow} U^{(\ell, \gamma)+}$$
where $U^{(\ell, \gamma)-}$ and $U^{(\ell,\gamma)+}$ are random variables whose distribution functions are
$F^{(\ell,\gamma)-}, F^{(\ell,\gamma)+}$, respectively.
% If $W_{n}^{(\ell, \gamma)-}$ and $W_{n'}^{(\ell, \gamma)+}$ denote the size-biased versions of
% $U_{n}^{(\ell,\gamma)-}$ and $U_{n'}^{(\ell, \gamma)+}$, we have as $n\rightarrow \infty$
% $$W_{n}^{(\ell ,\gamma)-} \stackrel{d}{\rightarrow} W_F^{(\ell, \gamma)-} \ \mbox{and} \
% W_{n'}^{(\ell ,\gamma)+} \stackrel{d}{\rightarrow} W_F^{(\ell, \gamma)+},$$
% where $W_F^{(\ell, \gamma)-}$ and $W_F^{(\ell, \gamma)+}$ follow the $F^{(\ell,\gamma)-}$ and $F^{(\ell,\gamma )+}$ size-biased
% distributions, respectively.
Furthermore, for any $\gamma \in (0,1)$ there exists $L_1(\gamma)$ such that for any $\ell > L_1(\gamma)$
$$ \left\| F^{(\ell,\gamma)-} - F \right\|_{\infty[x_0,C_\gamma]}, \left\| F^{(\ell,\gamma)+} - F \right\|_{\infty[x_0,C_\gamma]}
< {3\over 2} {\gamma^+ - \gamma \over 1- \gamma + \gamma^+}.$$
Also, for any such $\ell$ we have
$$\left| \int_{0}^{\infty}x dF^{(\ell,\gamma)+}(x)  - d \right|, \left| \int_{0}^{\infty}x dF^{(\ell,\gamma)-}(x)  - d
\right| < \rho ( \gamma). $$

% $$ {\bf 1}_{\{ U^{(\ell,\gamma)-} < C_\gamma \}} U^{(\ell, \gamma)-},{\bf 1}_{\{ U^{(\ell,\gamma)+} < C_\gamma \}} U^{(\ell,
% \gamma)+} \stackrel{d}{\rightarrow} {\bf 1}_{\{ W_F < C_\gamma \}} W_F,$$
% and
% $$ {\bf 1}_{\{ W_F^{(\ell,\gamma)-} < C_\gamma \}}W_F^{(\ell, \gamma)-},{\bf 1}_{\{ W_F^{(\ell,\gamma)+} < C_\gamma
% \}}W_F^{(\ell, \gamma)+} \stackrel{d}{\rightarrow}{\bf 1}_{\{ W_F^* < C_\gamma \}} W_F^*.$$
% where $W_F^*$ follows the $F$ size-biased distribution.
\end{lemma}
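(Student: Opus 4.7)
My plan is to observe that the three claims of Lemma~\ref{lem:size_biased} are essentially corollaries of the construction of $\bW^{(\ell,\gamma)\pm}$ and of the estimates derived in the paragraphs immediately preceding the lemma; the proof is then a matter of collecting these pieces together.

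For the convergence in distribution, I would argue directly from the construction. By definition, the empirical distribution $F_n^{(\ell,\gamma)-}$ places mass $|\cC_i|/n$ at each $W_i^-$ and mass $|\cC_\gamma^-|/n$ at $C_\gamma$. The partition endpoints $W_i^\pm$ of $\mathcal{P}_\ell$ can be taken to be continuity points of $F$, since by construction the countable set of discontinuities of $F$ in $[x_0,C_\gamma)$ was placed into its own distinguished intervals. Hence, by regularity of $(\bw(n))_{n \ge 1}$, $|\cC_i|/n \to F(W_i^+) - F(W_i^-)$; together with the displayed limit $|\cC_\gamma^-|/n \to p\gamma$, this yields pointwise convergence of $F_n^{(\ell,\gamma)-}$ to $F^{(\ell,\gamma)-}$ at continuity points, which is equivalent to $U_n^{(\ell,\gamma)-} \stackrel{d}{\to} U^{(\ell,\gamma)-}$. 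The argument for $U_{n'}^{(\ell,\gamma)+}$ is analogous, now using \eqref{eq:n'_bound} to handle the cap set $\cC_\gamma^+$ and the formula \eqref{eq:tail_value} for the single-copy vertices of weight in $[C_\gamma, 2C_\gamma)$.

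For the sup-norm bound, I would simply point to the chains of inequalities \eqref{eq:F_approx} and \eqref{eq:F_approx-}. Each concludes with the bound $\tfrac{3}{2}\,\tfrac{\gamma^+ - \gamma}{1 - \gamma + \gamma^+}$, valid for any $x$ in a part $I_i$, provided $\ell$ is large enough that $\eps_\ell$ is much smaller than $\gamma^+ - \gamma$. I would define $L_1(\gamma)$ as the smallest such $\ell$ and note that, since the $I_i$ cover $[x_0, C_\gamma]$, the pointwise bound extends to a uniform bound on the whole interval.

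For the integral bound, I would collect the integration-by-parts computations \eqref{eq:expansion}--\eqref{eq:temp_approx_II}, which lead directly to the explicit definition of $\rho(\gamma)$ and yield the stated inequality for $F^{(\ell,\gamma)+}$. The analogous chain for the ``minus'' sequence is even simpler since $F^{(\ell,\gamma)-}$ is constant on $[C_\gamma,\infty)$, so the integral truncates at $C_\gamma$; the same bound $\rho(\gamma)$ applies. The only nontrivial check, which is where I would invest the most care, is verifying that each of the four summands in the definition of $\rho(\gamma)$, namely $2C_\gamma \,\PRO{W_F > 2C_\gamma}$, $3\gamma C_\gamma$, $6W_\gamma$, and $\ex{\mathbf{1}_{W_F > 2C_\gamma} W_F}$, vanishes as $\gamma \to 0$. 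The first follows from \eqref{eq:convpr}, the second and third from the finiteness of $\ex{W_F}$ together with the definition of $C_\gamma$, and the last by dominated convergence; this ensures $\rho(\gamma) \to 0$ as required.
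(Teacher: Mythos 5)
Your proposal is correct and follows essentially the same route as the paper: the paper itself proves this lemma simply by summarizing the computations immediately preceding it (the pointwise limits of $F_n^{(\ell,\gamma)\pm}$ from the construction, the sup-norm bounds \eqref{eq:F_approx} and \eqref{eq:F_approx-}, and the integration-by-parts estimates \eqref{eq:expansion}--\eqref{eq:temp_approx_II} that define $\rho(\gamma)$), which is exactly what you assemble, with your verification that $\rho(\gamma)\to 0$ being a harmless extra needed later for $F$-convergence. The only point worth a glance is the convergence-in-distribution step, where your computation of the limit of $F_n^{(\ell,\gamma)-}$ on a part $I_i$ gives $F(W_i^+)$ rather than the paper's stated $F(W_i^-)$ (the paper's $\pm$ endpoints appear swapped there); since the two differ by at most $\eps_\ell$ this has no effect on the lemma's bounds.
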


\subsubsection{Bounds on $|\Af|$} For a subset $S \subseteq [n]$, let $\Af (S)$ denote the final set of infected vertices in $CL (\bw)$ assuming that $\Ao = S$. With this notation we have of course that $\Af = \Af (\Ao)$. We also set $\Af^-(S)$ to be the set of infected vertices in $CL' (\bW^{(\ell,\gamma)-})$, respectively, assuming that the initial set
is $S \cap [n'']$.
Finally, for a subset $S \subseteq [n']$ let $\Af^+(S)$  the final set of infected vertices on $CL' (\bW^{(\ell,\gamma)+})$.
We will show the following.
\begin{claim} \label{clm:stoch_sandwich}
Let $p\in(0,1)$. Assume that $\Ao$ is a random subset of $[n]$ where each vertex is included with probability $p$ independently of any other vertex. Then there is a coupling space on which a.a.s.
\begin{equation}
\label{eq:couplingAf}
|\Af^- (\Ao \cup \cC_\gamma^- )|\leq |\Af| \leq |\Af^+ (\Ao \cup \cC_\gamma^+)|.
\end{equation}
\end{claim}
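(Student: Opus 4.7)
The plan is to realize both inequalities on a joint probability space by constructing monotone edge-couplings between $CL(\bw)$ and each of $CL'(\bW^{(\ell,\gamma)+})$ and $CL'(\bW^{(\ell,\gamma)-})$, and then to propagate them through the bootstrap dynamics by induction on the round number. Throughout, the low-weight vertices are shared between $[n]$, $[n'']$ and $[n']$ via their common labels in $\bigcup_{k=1}^{p_\ell}\cC_k(n)$, while the high-weight parts $\cC_\gamma$, $\cC_\gamma^-$, $\cC_\gamma^+$ are handled separately in each direction, using the standard monotonicity of bootstrap percolation (adding edges or initially infected vertices only enlarges the final set).

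For the upper bound, between any two low-weight vertices $i,j$ a single uniform $U_{ij}$ drives the edge in both $CL(\bw)$ and $CL'(\bW^{(\ell,\gamma)+})$; since $W_{k(i)}^+\ge w_i$ and $W_{k(j)}^+\ge w_j$, any such edge in $CL(\bw)$ is automatically an edge of $CL'(\bW^{(\ell,\gamma)+})$. For a low-weight $i$ and a high-weight $j\in\cC_\gamma$ with $w_j\ge 2C_\gamma$, write $p_{ij}=w_iw_j/W_{[n]}(\bw)$ and $q=2w_iC_\gamma/W_{[n]}(\bw)$. Sample the $r_j=2\lfloor w_j/C_\gamma\rfloor$ edges from $i$ to the copies of $j$ in $CL'(\bW^{(\ell,\gamma)+})$ from independent uniforms $U_{i,j,1},\ldots,U_{i,j,r_j}$; then $V_{ij}:=1-(1-\min_l U_{i,j,l})^{r_j}$ is uniform on $[0,1]$, and the elementary estimate
\[
r_jq\;\ge\;\frac{4w_iw_j}{W_{[n]}(\bw)}-\frac{4w_iC_\gamma}{W_{[n]}(\bw)}\;\ge\;2p_{ij},
\]
together with $q=O(1/n)$ (since $C_\gamma$ is fixed and $w_i\le C_\gamma$), yields $1-(1-q)^{r_j}\ge p_{ij}$ for $n$ large; defining the edge $\{i,j\}$ of $CL(\bw)$ to be present exactly when $V_{ij}\le p_{ij}$ then forces at least one copy of $j$ in $\cC_\gamma^+$ to be adjacent to $i$ in $CL'(\bW^{(\ell,\gamma)+})$. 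The case $C_\gamma\le w_j<2C_\gamma$ has $r_j=1$ with the single copy inheriting the weight $w_j$, and the coupling is immediate. Since the copies corresponding to distinct $j$'s are distinct vertices, an induction on the round number shows that the set of infected low-weight vertices of $CL(\bw)$ is contained in the set of infected low-weight vertices of $CL'(\bW^{(\ell,\gamma)+})$: a low-weight $i$ that gathers at least $r$ infected neighbours in $CL(\bw)$ gathers at least as many in $CL'(\bW^{(\ell,\gamma)+})$, because the low-weight contribution is preserved by the induction hypothesis and the low-low coupling, and each high-weight neighbour in $\cC_\gamma$ yields a distinct adjacent copy in $\cC_\gamma^+$ which is initially infected by construction. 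Using $|\cC_\gamma^+|\ge|\cC_\gamma|\ge|\Af\cap\cC_\gamma|$ and that $\cC_\gamma^+$ is fully infected, we obtain $|\Af|\le|\Af^+(\Ao\cup\cC_\gamma^+)|$. The main technical point here is the multi-copy inequality $1-(1-q)^{r_j}\ge p_{ij}$: the factor of $2$ in $r_j=2\lfloor w_j/C_\gamma\rfloor$ is tailored precisely to absorb the floor loss together with the gap $w_j-2C_\gamma$.

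For the lower bound, a Chernoff estimate gives $|\Ao\cap\cC_\gamma|\ge p|\cC_\gamma|-n^{2/3}=|\cC_\gamma^-|$ a.a.s., and on this event I fix an injection $\phi:\cC_\gamma^-\hookrightarrow\Ao\cap\cC_\gamma$. For $i\in\bigcup_k\cC_k$ and $v\in\cC_\gamma^-$, the edge $\{i,v\}$ in $CL'(\bW^{(\ell,\gamma)-})$ has probability $w_iC_\gamma/W_{[n]}(\bw)$, while $\{i,\phi(v)\}$ in $CL(\bw)$ has the larger probability $w_iw_{\phi(v)}/W_{[n]}(\bw)$ (since $w_{\phi(v)}\ge C_\gamma$), so a common uniform couples them with the former implying the latter; low-low edges are coupled using $W_{k(i)}^-\le w_i$. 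An analogous induction on the round number then shows $|\Af^-(\Ao\cup\cC_\gamma^-)|\le|\Af|$ a.a.s., since every infected neighbour of $i$ in $\cC_\gamma^-$ maps via $\phi$ to a distinct vertex of $\Ao\cap\cC_\gamma$ that is adjacent to $i$ in $CL(\bw)$ and infected from round~$0$.
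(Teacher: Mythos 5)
Your proposal is correct and follows essentially the same route as the paper: monotone edge couplings based on $W^{(\ell,\gamma)-}\le w\le W^{(\ell,\gamma)+}$, a Chernoff bound to identify $\cC_\gamma^-$ with initially infected vertices of $\cC_\gamma$, and the multi-copy estimate $1-\bigl(1-2w_iC_\gamma/W_{[n]}\bigr)^{2\lfloor w_j/C_\gamma\rfloor}\ge w_iw_j/W_{[n]}$, which is exactly the paper's inequality~(\ref{eq:aim_proof}) (proved there via Bonferroni, and in your version via $r_jq\ge 2p_{ij}$ plus a second-order term that is negligible because $\max_j w_j=o(n)$, which follows from regularity). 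The only cosmetic difference is that you run an explicit round-by-round induction where the paper phrases the conclusion as stochastic domination through the intermediate quantity $|\Af(\Ao\cup\cC_\gamma)|$.
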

\begin{proof}
As $\Ao$ is formed by including every vertex in $[n]$ independently with probability $p$, it follows that a.a.s.\ at least $k_-$ elements of $\cC_\gamma$ become initially infected. We identify exactly $k_-$ of them with the
set $\cC_\gamma^-$. Note that for each $i \in [n]$ we have
$W_i^{(\ell,\gamma)-}(n) \leq w_i(n)$. This implies that for each pair $i,j \in [n'']$ of distinct vertices, the probability that these are
adjacent is smaller in $CL'(\bW^{(\ell,\gamma)-})$ compared to that in $CL (\mathbf{w})$. Hence, there is coupling space on which
$$ CL'(\bW^{(\ell,\gamma)-}) \subseteq CL(\mathbf{w}), $$
and the first inequality in~\eqref{eq:couplingAf} follows. The second inequality follows from a slightly more involved argument. Let $j \in \cC_\gamma$ be such that $w_j (n) \geq 2C_\gamma$ and let $k \in \cup_{i=1}^\ell \cC_i $.
The probability that $k$ is adjacent to $j$ in $CL(\mathbf{w})$ is equal to $w_k w_j / W_{[n]}$.
Also, the probability that $k$ is adjacent to at least one of the copies of $j$ in $[n']$ in the random graph
$CL'(\bW^{(\ell,\gamma)+})$ is
$$ 1 - \left( 1- {2w_k C_\gamma \over W_{[n]}} \right)^{2 \lfloor {w_j / C_\gamma}\rfloor}. $$
Assume that we show that for $n$ sufficiently large we have that for any $k \in \cup_{i=1}^{p_\ell} \cC_i $ and any $j \in \cC_\gamma$
\begin{equation}  \label{eq:aim_proof}
{w_k w_j \over W_{[n]}} \leq 1 - \left( 1- {2w_k C_\gamma \over W_{[n]}} \right)^{2 \lfloor {w_j / C_\gamma}\rfloor}.
\end{equation}
Moreover, assume that every vertex in $\cC_\gamma'$ is among those vertices that are initially infected.
Now, observe that there is coupling space in which we have
\begin{equation} \label{eq:stoch_inclusion}
CL(\mathbf{w})[\cup_{i=1}^{p_\ell}\cC_i] \subseteq CL'(\bW^{(\ell,\gamma)+})[\cup_{i=1}^{p_\ell}\cC_i].
\end{equation}
This is the case, since for any $k \in \cup_{i=1}^{p_\ell}\cC_i$
we have $w_k(n) \leq W_k^{(\ell,\gamma)+}(n)$.
Consider a vertex $k \in \cup_{i=1}^{p_\ell}\cC_i$ and now let $j \in \cC_\gamma$.
Now, Inequality (\ref{eq:aim_proof}) implies that the
probability that $k$ is adjacent to $j$ in $CL(\mathbf{w})$ is at most the probability that $k$ is adjacent to at least one of the
copies of $j$ in $[n']$ within $CL'(\bW^{(\ell,\gamma)+})$. Thereby, it follows that the number of neighbours
of $k$ in $\cC_\gamma$ in the random graph $CL(\mathbf{w})$ is stochastically dominated by the size of the neighbourhood of $k$
in $\cC_\gamma'$ in the random graph $CL'(\bW^{(\ell,\gamma)+})$. This observation together with (\ref{eq:stoch_inclusion}) imply
that
$$ |\Af (\Ao \cup \cC_\gamma)| \leq_{st} |\Af^+(\Ao \cup \cC_\gamma^+)|.$$
But also,
\begin{equation*} |\Af| \leq_{st} |\Af (\Ao \cup \cC_\gamma)|.
\end{equation*}
The second stochastic inequality of the claim follows from the above two inequalities. It remains to show (\ref{eq:aim_proof}). Using the Bonferroni inequalities we have
\begin{equation} \label{eq:Bonferroni}
\begin{split}
 &1 - \left( 1- {2w_k C_\gamma \over W_{[n]}} \right)^{2 \lfloor {w_j / C_\gamma}\rfloor}
 	\geq   2 \lfloor {w_j \over C_\gamma}\rfloor {2w_k C_\gamma \over W_{[n]}} - 2 \left( {w_j / C_\gamma} \right)^2 ~{4 w_k^2 C_\gamma^2 \over W_{[n]}^2}.
\end{split}
\end{equation}
But
$$  2 \lfloor {w_j \over C_\gamma}\rfloor {2w_k C_\gamma \over W_{[n]}} \geq
2 \left({w_j \over C_\gamma} -1 \right) {2w_k C_\gamma \over W_{[n]}}
= 2 {w_j \over C_\gamma} \left( 1- {C_\gamma \over w_j} \right) {2w_k C_\gamma \over W_{[n]}}
\stackrel{w_j /C_\gamma \geq 2}{\geq}
{2 w_k w_j \over W_{[n]}}.$$
Substituting this lower bound into (\ref{eq:Bonferroni}) we obtain
$$1 - \left( 1- {2w_k C_\gamma \over W_{[n]}} \right)^{2 \lfloor {w_j \over C_\gamma}\rfloor} \geq
{2 w_k w_j \over W_{[n]}} - {8 w_k^2 w_j^2 \over W_{[n]}^2} =
 {2 w_k w_j \over W_{[n]}} \left(1 - {4 w_k w_j \over W_{[n]}} \right) >  {w_k w_j \over W_{[n]}},$$
 for $n$ sufficiently large, as $w_k < C_\gamma$ and $w_j = w_j(n) = o(n)$ (uniformly for all $j$) but $W_{[n]} = \Theta (n)$.
\end{proof}
We will now apply Theorem~\ref{thm:FinalInfectionI} to the random variables that bound $|\Af|$ in Claim~\ref{clm:stoch_sandwich}. Theorem~\ref{thm:FinalInfectionI} implies that there exists $\gamma_2 >0$ satisfying the following:
for any $\gamma < \gamma_2$ and any $\delta \in (0,1)$ there exists an infinite set of natural numbers
$\mathcal{S}^1$ such that for every $\ell \in \mathcal{S}^1$ with probability $1-o(1)$
\begin{equation} \label{eq:UpperBound}
\begin{split}
n^{-1}{|\Af^{+} (\Ao \cup \cC_\gamma^+) |} \leq
 (1+ \delta) ((1-p) \ex{ W_F \hat{y}} + p),
% \left( c^+(\gamma) (1-p)\ex { \psi_r \left( {\bf 1}_{\{ U^{(\ell,\gamma)+} < C_\gamma \}} U^{(\ell,\gamma)+}\hat{y}_{\ell,\gamma}^+\right)}
% +p~( 1-\gamma ) + \gamma^+ \right)
\end{split}
\end{equation}
and an infinite set of natural numbers
$\mathcal{S}^2$ such that for every $\ell \in \mathcal{S}^2$ with probability $1-o(1)$
\begin{equation} \label{eq:LowerBound}
\begin{split}
n^{-1}{|\Af^- (\Ao \cup \cC_\gamma^- ) |} \geq  (1-\delta) ((1-p) \ex{ W_F \hat{y}} + p).
% \left( c^- (\gamma ) (1-p)\ex { \psi_r \left({\bf 1}_{\{ U^{(\ell,\gamma)-} < C_\gamma \}} U^{(\ell,\gamma)-}\hat{y}_{\ell,\gamma}^-\right)}
% +p~(1- \gamma) + \gamma p  \right).
\end{split}
\end{equation}
%Note that (\ref{eq:n'_bound}) implies that for $n$ large enough $n' \leq n (1+ \gamma + 2 W_\gamma / C_\gamma )$ -- the factor of
%$n$ can be arbitrarily close to 1 provided that $\gamma$ becomes small.
Hence, Claim~\ref{clm:stoch_sandwich} together with (\ref{eq:UpperBound}) and (\ref{eq:LowerBound}) imply the following a.a.s.
bounds on the size of $\Af$:
%There exists $\gamma_2 >0$ with the property that for every $\gamma < \gamma_2$ there exists an infinite set of natural numbers
\begin{equation*}
\begin{split}
n^{-1} {|\Af| } = (1\pm \delta) ((1-p) \ex{ W_F \hat{y}} + p),
\end{split}
\end{equation*}
whereby Theorem~\ref{thm:FinalInfection} follows.

\subsection{Proof of Theorem~\ref{thm:power_law}}
%\begin{proof}[Proof of Theorem~\ref{thm:power_law}]

Let us assume that $\Ao$ is randomly selected, including each vertex independently with probability $a(n)/n$, where
$a(n) \gg a_c (n)$ but $a(n) = o(n)$ (cf. Theorem~\ref{thm:power_law} for the definition of the function $a_c(n)$).
For $\eps \in (0,1)$ let $\Ao^{(\eps)}$ denote a random subset of $[n]$ where
each vertex is included independently with probability $\eps$. If $n$ is large enough, then $\Ao$ can be coupled with $\Ao^{(\eps)}$,
that is, there is a coupling space in which $\Ao \subseteq \Ao^{(\eps)}$.
The following stochastic upper bound can be deduced as in Claim~\ref{clm:stoch_sandwich}.
\begin{claim} \label{clm:UpperDomination}
For any $\eps \in (0,1)$ and any $\gamma >0$, if $n$ is large enough, then
\begin{equation*}
|\Af| \leq_{st} |\Af (\Ao^{(\eps)} \cup \cC_\gamma )| \leq_{st} |\Af^+ (\Ao^{(\eps)} \cup \cC_\gamma^+ )|.
\end{equation*}
\end{claim}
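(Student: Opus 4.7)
The plan is to establish the two stochastic dominations separately, each by a monotonicity-and-coupling argument in the spirit of the proof of Claim~\ref{clm:stoch_sandwich}.

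\textbf{First inequality.} Since $a(n) = o(n)$, for all $n$ sufficiently large we have $a(n)/n < \eps$. I would couple $\Ao$ and $\Ao^{(\eps)}$ on a common probability space by drawing, for each vertex $v \in [n]$, an independent uniform $U_v \in [0,1]$, and placing $v \in \Ao$ iff $U_v < a(n)/n$ and $v \in \Ao^{(\eps)}$ iff $U_v < \eps$. Then $\Ao \subseteq \Ao^{(\eps)} \subseteq \Ao^{(\eps)} \cup \cC_\gamma$ pointwise. Since bootstrap percolation is monotone in its initial set on a fixed graph, on this coupling $\Af = \Af(\Ao) \subseteq \Af(\Ao^{(\eps)} \cup \cC_\gamma)$, and the first stochastic inequality follows.

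\textbf{Second inequality.} For the second inequality I would reuse almost verbatim the second half of the proof of Claim~\ref{clm:stoch_sandwich}, with $\Ao^{(\eps)}$ playing the role that $\Ao$ played there. The two ingredients carry over unchanged: for every $k \in \cup_{i=1}^{p_\ell} \cC_i$ we have $w_k(n) \le W_k^{(\ell,\gamma)+}(n)$, so edge probabilities restricted to $\cup_i \cC_i$ in $CL(\bw)$ are dominated by those in $CL'(\bW^{(\ell,\gamma)+})$; and inequality~\eqref{eq:aim_proof}, valid for $n$ sufficiently large by the Bonferroni calculation already performed, shows that the probability that $k \in \cup_i \cC_i$ is joined to $j \in \cC_\gamma$ in $CL(\bw)$ is at most the probability that $k$ is joined to at least one copy of $j$ in $CL'(\bW^{(\ell,\gamma)+})$. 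Merging each $j \in \cC_\gamma$ with its cluster of copies then yields a joint coupling on which the edges of $CL(\bw)$ relevant for infection transmission into $\cup_i \cC_i$ are contained in the corresponding edges of $CL'(\bW^{(\ell,\gamma)+})$. Since every vertex of $\cC_\gamma^+$ is initially infected on the right-hand side, monotonicity of the bootstrap process again delivers $|\Af(\Ao^{(\eps)} \cup \cC_\gamma)| \leq_{st} |\Af^+(\Ao^{(\eps)} \cup \cC_\gamma^+)|$.

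\textbf{Main obstacle.} There is in truth very little obstacle here: the Bonferroni estimate~\eqref{eq:aim_proof} that drove Claim~\ref{clm:stoch_sandwich} does not involve the initial set at all, and so the present claim reduces to checking that the coupling construction is insensitive to swapping the $\mathrm{Bernoulli}(a(n)/n)$ initial occupation for a $\mathrm{Bernoulli}(\eps)$ one. The only subtlety is to ensure that the two couplings, one on initial sets and one on edges, are compatible, but since each inequality holds pointwise for any joint realisation of $(\Ao, \Ao^{(\eps)})$ together with the edges of both random graphs, they can simply be chained to yield the claim.
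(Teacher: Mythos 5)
Your proposal is correct and matches the paper's own treatment: the paper simply notes that $\Ao$ can be coupled inside $\Ao^{(\eps)}$ for large $n$ (since $a(n)=o(n)$) and that the rest "can be deduced as in Claim~\ref{clm:stoch_sandwich}", which is exactly the monotone coupling of initial sets plus the reuse of the Bonferroni-based edge coupling~\eqref{eq:aim_proof} that you spell out. No substantive difference in approach.
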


We will now deduce a stochastic lower bound on $|\Af|$.
For $C>0$, let $\cK_C$ denote the set of vertices having weight at least $C$ in ${\mathbf w}$.
In~\cite{ar:AmFount2012} the first two authors prove that if $\eps \in (0,1)$ is sufficiently small and $\Ao$ is selected as above, then at
least a $1-\eps$-fraction of the vertices of $\cK_C$ become infected if we consider a bootstrap percolation process on $CL(\mathbf{w})$
with activation threshold $r$ where the vertices in $[n] \setminus \cK_C$ are assumed to be ``frozen", that is, they never get infected.
\begin{lemma}[Proposition 3.7~\cite{ar:AmFount2012}] \label{lem:CKernelInfection}
There exists an $\eps_0 = \eps_0 (\beta, c_1, c_2) >0$ such that for any positive $\eps < \eps_0$
there exists $C=C( c_1, c_2, \beta, \eps, r) >0$ for which the following holds.
Assume that $\Ao$ is as above and consider a bootstrap percolation process on $CL(\bw)$ with activation threshold $r\geq 2$ and
the set $\Ao$ as the initial set, with the restriction that the vertices in $[n] \setminus \{ \cK_C \cup \Ao\}$ never become infected.
Then at least $(1- \eps) |\cK_C|$ vertices of $\cK_C$ become infected with probability $1-o(1)$.
\end{lemma}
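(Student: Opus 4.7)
The plan is to prove Lemma~\ref{lem:CKernelInfection} by a layered cascade argument: discretise $\cK_C$ into geometrically spaced weight classes and propagate the infection from the heaviest class downward. Fix some $b > 1$ (to be chosen large in terms of $\eps$) and let $I_k = [C b^k, C b^{k+1})$, $V_k = \{i : w_i \in I_k\}$, for $0 \le k \le K := \lceil\log_b(n^\zeta/C)\rceil$. The power-law hypothesis gives $|V_k| = \Theta(n (Cb^k)^{1-\beta})$ a.a.s., uniformly in $k$, and in particular the top class $V_K$ has size $\Theta(n^{1-\zeta(\beta-1)})$.

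First I would handle the \emph{base case}, showing that a $(1-\eps/2)$-fraction of $V_K$ is infected already by $\Ao$ alone. For $v \in V_K$ the number of neighbours of $v$ in $\Ao$ is stochastically close to $\Bin(|\Ao|, w_v/W_{[n]})$, which has mean $\Theta(n^\zeta a(n)/n)$. A direct computation shows that the threshold $a_c(n) = n^{(r(1-\zeta) + \zeta(\beta-1) - 1)/r}$ is exactly calibrated so that, when $a(n) \gg a_c(n)$, the mean above is large enough that $\PRO{\Bin(|\Ao|, w_v/W_{[n]}) < r} = o(|V_K|^{-1})$. A union bound over $V_K$ then infects almost all of $V_K$.

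Next comes the \emph{inductive step}. Assume that a $(1-\eps_k)$-fraction of $\bigcup_{j \ge k} V_j$ is already infected. For a vertex $v \in V_{k-1}$, the expected number of its neighbours lying in the already-infected portion of higher classes is at least
\begin{equation*}
\sum_{j \ge k} (1-\eps_j)|V_j| \cdot \frac{w_v (Cb^j)}{W_{[n]}}
\;=\; \Theta\!\left( \sum_{j \ge k} (1-\eps_j) (Cb^{k-1})(Cb^j)^{2-\beta} \right).
\end{equation*}
Since $2-\beta<0$, the sum is dominated by its first term and is of order $(1-\eps_k)\, C^{3-\beta} b^{(k-1)(3-\beta)}\cdot b^{\beta-2}$, which can be made arbitrarily large (in particular $\gg r$) by choosing $C$ and $b$ large depending only on $\beta, c_1, c_2, r, \eps$. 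A Chernoff bound together with a union bound over $V_{k-1}$ then yields $\eps_{k-1} \le \eps_k + \delta$ where $\delta$ can be taken as small as we like by tuning $C$; iterating $K = O(\log n)$ times gives $\eps_0 \le \eps/2 + K\delta \le \eps$, provided $\delta = o(1/\log n)$, which is achievable because the gap between the mean and $r$ is polynomial in $C$ and hence in $1/\delta$.

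The main obstacle is exactly this \emph{loss-control across $K = \Theta(\log n)$ layers}: one needs the ``safety margin'' in the Chernoff bound at each level to be strong enough that the accumulated slack over all levels still leaves us above $1-\eps$. This is the reason $C$ must be chosen large as a function of $\eps$ (and of $r$, $\beta$). A secondary technical point is that the quantities $|V_k|$ and $W_{[n]}$ must be controlled uniformly in $k$; this follows from standard Chernoff estimates using the two-sided bound $c_1 x^{1-\beta} \le 1 - F_n(x) \le c_2 x^{1-\beta}$ in the power-law definition, together with the regularity of $(\bw(n))_{n\ge 1}$ giving $W_{[n]} = (d+o(1))n$ a.a.s. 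The restriction that vertices outside $\cK_C \cup \Ao$ are frozen only strengthens the argument, since we never need to rely on infections from low-weight vertices.
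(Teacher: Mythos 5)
Your base case is where the argument breaks, and it breaks precisely in the regime the lemma is about. For $v$ in the top class $V_K$ (weight $\approx n^{\zeta}$), the number of neighbours of $v$ in $\Ao$ is approximately $\Bin(|\Ao|, w_v/W_{[n]})$ with mean $\mu = \Theta\bigl(a(n)\,n^{\zeta-1}\bigr)$. Plugging in $a_c(n) = n^{(r(1-\zeta)+\zeta(\beta-1)-1)/r}$ gives $\mu_c = \Theta\bigl(n^{(\zeta(\beta-1)-1)/r}\bigr)$, which tends to $0$ whenever $\zeta < 1/(\beta-1)$. The calibration of $a_c$ is therefore \emph{not} that $\PRO{\Bin(|\Ao|,w_v/W_{[n]}) < r} = o(|V_K|^{-1})$; it is the opposite statement, namely that $\PRO{\Bin \ge r} = \Theta(\mu^r) = \Theta(|V_K|^{-1})$ at $a = a_c$, so that the \emph{expected number} of top-class vertices with $r$ initially infected neighbours is $\Theta(1)$. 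Consequently, for $a_c(n) \ll a(n) \ll n^{1-\zeta}$ (say $a = a_c\log\log n$), the per-vertex probability of direct infection is $\Theta((a/a_c)^r/|V_K|) = o(1)$: only a diverging \emph{number}, but a vanishing \emph{fraction}, of $V_K$ is infected by $\Ao$ alone, and your union bound (which would require $\mu \gtrsim \log |V_K|$, i.e.\ $a \gtrsim n^{1-\zeta}\log n$) cannot start the cascade. This is exactly why the actual proof of Proposition 3.7 in~\cite{ar:AmFount2012} (the paper under review only cites it and proves nothing here) works with a sparse set of seeds in the top weight classes and amplifies their number step by step down the weight scale, and it is in that amplification that the hypothesis $\zeta > \frac{r-1}{2r-\beta+1}$ is needed — a hypothesis your proof never uses, which is itself a warning sign, since for smaller $\zeta$ the threshold is no longer $a_c$.

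Two secondary points. First, in the inductive step the arithmetic of the dominant term should give a conditional mean of order $C^{3-\beta}b^{m(3-\beta)+(2-\beta)}$ for a vertex in class $m$; this is a constant for the bottom classes (where most of $\cK_C$ lives), so with $C$ fixed you cannot push the per-vertex failure probability below $1/|V_{k-1}|$, and a union bound over the class is unavailable — nor is it needed: one should accept a failure fraction $\approx \exp(-c\,C^{3-\beta}b^{m(3-\beta)})$ per class, note that these fractions, weighted by $|V_m|/|\cK_C| = \Theta(b^{m(1-\beta)})$, sum to $O(\exp(-c\,C^{3-\beta}b^{2-\beta}))$, and make this $<\eps$ by choosing $C$ large. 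Second, and for the same reason, your requirement $\delta = o(1/\log n)$ per level is neither achievable with constant $C$ nor necessary; the losses decay rapidly with the level and are summable, so there is no accumulation problem once the bookkeeping is done by fractions of each class rather than by a uniform per-level $\delta$. But even with these repairs the layered scheme only works once a constant fraction of some high weight class is infected, so the missing seeding/amplification argument remains the essential gap.
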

% Lemma~\ref{lem:CKernelInfection} implies that for any $\eps >0$ that is sufficiently small there exists
% $C = C( c_1, c_2, \beta, \eps, r) >0$ such that
% with probability $1-o(1)$ at least $(1-\eps) |\cK_C|$ vertices of $\cK_C$ will be infected in $CL(\bw )$, assuming that the vertices in
% $[n] \setminus \{\cK_C \cup \Ao \}$ never become infected. 
Let $\E_{C,\eps, n}$ denote this
event and, if it is realised, we let $\cK_{C,\eps}$ denote a subset of $\lfloor (1-\eps)|\cK_{C}| \rfloor=:k$ vertices in
$\cK_C$ that become infected chosen in some particular way (for example, the $k$ lexicographically smallest vertices).
Hence, the following holds.
\begin{claim} \label{clm:LowerInclusion}
For any $C>0$ and any $\eps \in (0,1)$, there is a coupling such that if $\E_{C, \eps, n}$ is realised, then we have
$$  \Af (\cK_{C,\eps}) \subseteq \A_f.$$
\end{claim}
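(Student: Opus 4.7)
The plan is to deduce the inclusion from two elementary monotonicity properties of bootstrap percolation, so the ``coupling'' is really the trivial one: we build a single realisation of $CL(\bw)$ and use the same $\Ao$ for both processes under comparison.

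First, I would record a purely deterministic monotonicity principle. Fix a graph $G$ and threshold $r$, and compare two runs starting from the same initial set $S$: the \emph{unrestricted} process and a \emph{restricted} process in which a set $U$ disjoint from $S$ is declared frozen (vertices of $U$ can never become infected). A round-by-round induction shows that the infected set of the restricted process at time $t$ is contained in the infected set of the unrestricted process at time $t$: indeed, any vertex that the restricted process infects at step $t+1$ has $r$ neighbours already infected in the restricted process and hence, by the inductive hypothesis, also in the unrestricted one. Applying this with $S = \Ao$ and $U = [n]\setminus(\cK_C \cup \Ao)$, the restricted process is precisely the one considered in Lemma~\ref{lem:CKernelInfection}. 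On the event $\E_{C,\eps,n}$ it infects every vertex of the designated subset $\cK_{C,\eps}$, and monotonicity therefore gives $\cK_{C,\eps} \subseteq \Af(\Ao) = \Af$.

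Second, I would use that $\Af$ is a fixed point of the bootstrap dynamics on $CL(\bw)$: by definition, no vertex in $[n]\setminus \Af$ has $r$ or more neighbours in $\Af$. Hence, starting bootstrap percolation on $CL(\bw)$ from any $T \subseteq \Af$, an easy induction on rounds shows that the currently infected set remains inside $\Af$, because at every round a newly infected vertex outside $\Af$ would require $r$ neighbours in $\Af$, contradicting the stability of $\Af$. Taking $T = \cK_{C,\eps}$ yields $\Af(\cK_{C,\eps}) \subseteq \Af$, which is the claim. There is essentially no technical obstacle: the statement is a deterministic consequence of these two monotonicity facts once Lemma~\ref{lem:CKernelInfection} has placed $\cK_{C,\eps}$ inside $\Af$, with the randomness entering only through the event $\E_{C,\eps,n}$ on which the argument is conditioned.
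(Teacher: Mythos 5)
Your proposal is correct and matches the paper's (implicit) reasoning: the paper states Claim~\ref{clm:LowerInclusion} as an immediate consequence of Lemma~\ref{lem:CKernelInfection}, relying on exactly the two deterministic monotonicity facts you spell out — the restricted (frozen) process is dominated by the unrestricted one on the same realisation of $CL(\bw)$ with the same $\Ao$, so on $\E_{C,\eps,n}$ we have $\cK_{C,\eps}\subseteq\Af$, and then $\Af(\cK_{C,\eps})\subseteq\Af(\Af)=\Af$ since $\Af$ is closed under the dynamics. The trivial coupling (same graph, same initial set) is indeed all that is needed.
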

% Now, given $\eps < \eps_0$ and $\beta >2$, we further let $\gamma >0$ be such that $C_{\gamma} =
% C (\gamma_1, \gamma_2, \beta, \eps)$.
Let $\gamma \in F([0,\infty) )$ be such that $C_\gamma = C$, where $C = C (\eps)$ is as in Lemma~\ref{lem:CKernelInfection}.

Consider a set of vertices $\{v_1,\ldots, v_k\}$ which is disjoint from $[n]$.
We define a sequence $\tilde{\bW}^{(\ell, \gamma)-}$
on $\left(\cup_{i=1}^{p_\ell} \cC_i \right) \bigcup \{v_1,\ldots, v_k \}$ as follows. For every $j \in \cC_i$, with $i=1,\ldots, p_\ell$,
we have $\tilde{W}^{(\ell, \gamma)-}_j=W_j^{(\ell, \gamma)-}$, whereas for
every $j=1,\ldots, k$ we let $\tilde{W}^{(\ell, \gamma)-}_{v_j} = C_\gamma$.
We let $n_-$ be the number of vertices of the sequence $\tilde{\bW}^{(\ell,\gamma)-}$, that is, the size of
$\left(\cup_{i=1}^{p_\ell} \cC_i \right) \bigcup \{v_1,\ldots, v_k \}$. Since $k = (1-\eps) \gamma n (1+o(1))$, this satisfies
$n_- =( (1-\gamma) + \gamma(1-\eps) )n (1+o(1)) = (1-\gamma\eps) n(1+o(1))$.
Hence, for large $n$ we have $n_- < n$. We identify the vertices in $\{ v_1,\ldots, v_k \}$ with the lexicographically $k$ first
vertices in $\cC_\gamma$ and we denote both subsets by $\cC_{\gamma, k}$ (that is, both $\{ v_1,\ldots, v_k \}$ and the corresponding
subset of $\cC_\gamma$).
Setting $\tilde{W}_\gamma^- : = (1-\eps) \gamma C_\gamma $, the weight of these vertices is $n\tilde{W}_\gamma^- (1+o(1))$,
since each of them has weight equal to $C_\gamma$.

The weight sequence $\tilde{\bW}^{(\ell,\gamma)-}$ gives rise to a probability distribution which is the limiting probability distribution
of the weight of a uniformly chosen vertex from $[n_-]$. We let $\tilde{U}^{(\ell,\gamma)-}$ be a random variable which follows this
distribution and let $\tilde{W}_F^{(\ell,\gamma)-}$ denote a random variable which follows the $\tilde{U}^{(\ell,\gamma)-}$ size-biased
distribution. The definition of $\tilde{\bW}^{(\ell,\gamma)-}$ yields
$$ \PRO {\tilde{U}^{(\ell,\gamma)-} = W_i^-} = {\gamma_i \over 1- \gamma \eps}, \
\mbox{and} \ \PRO {\tilde{U}^{(\ell,\gamma)-} = C_\gamma} = {(1- \eps ) \gamma \over 1-\gamma \eps}. $$

As is Lemma~\ref{lem:size_biased}, one can show that $\tilde{\bW}^{(\ell,\gamma)-}$ is an $F$-convergent weight sequence
with error $\rho$, where $\rho=\rho (\gamma)$ is a function such that $\rho(\gamma) \downarrow 0$ as $\gamma \downarrow 0$.
We omit the proof.

% Consider the subset of $\cK_{C_\gamma}$ which consists of the $\lfloor (1-\eps)|\cK_{C_{\gamma, \eps}}| \rfloor$ vertices of smallest
% weight -- we denote this set by $\cK_{C_{\gamma},\eps}'$. Hence, $|\cK_{C_{\gamma},\eps}|=|\cK_{C_{\gamma},\eps}'|$.

Let $\hat{\Af} ( \cC_{\gamma,k} )$ be the final set of infected vertices in $CL (\bw )$
assuming that the initial set is $\cC_{\gamma,k}$ and moreover no vertices in
$\cC_{\gamma} \setminus \cC_{\gamma, k}$ ever become infected.
Hence, on the event $\mathcal{E}_{C_\gamma, \eps, n}$ we have
$$ |\hat{\Af} ( \cC_{\gamma,k} )| \leq_{st} |\Af ( \cK_{C_\gamma, \eps} )|.$$
But the assumption that no vertices in $\cC_{\gamma} \setminus \cC_{\gamma, k}$ ever become active
amounts to a bootstrap percolation process on $CL' (\tilde{\bW}^{(\ell,\gamma)-})$ with activation threshold equal to $r$.
Let $\tilde{\Af}  (S)$ denote the final set in this graph under the assumption that the initial set is $S \subseteq [n']$.
Since  $CL' (\tilde{\bW}^{(\ell,\gamma)-}) \subseteq CL  (\bw )$ on a certain coupling space we have
$$ |\tilde{\Af} ( \cC_{\gamma,k} )| \leq_{st} |\hat{\Af} (\cC_{\gamma,k} )|. $$
Therefore
$$|\tilde{\Af} (\cC_{\gamma,k} )| \leq_{st} | \Af ( \cK_{C_\gamma, \eps} )|. $$
%since in $\bW^{(\ell,\gamma)-}$ all vertices which belong to $\cC_\gamma$ have the same weight that is equal to $C_\gamma$.
This together with Claim~\ref{clm:LowerInclusion} imply the following stochastic lower bound on $|\Af|$.
\begin{claim} \label{clm:LowerDomination}
For any $\gamma, \eps \in (0,1)$, if $\E_{C_{\gamma},\eps, n}$ is realised, then
\begin{equation*}
|\tilde{\Af} ( \cC_{\gamma,k} ) | \leq_{st} |\Af|.
\end{equation*}
\end{claim}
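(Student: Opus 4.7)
The plan is to prove the claim by stringing together the chain of stochastic inequalities established in the paragraphs immediately above with the pointwise inclusion of Claim~\ref{clm:LowerInclusion}, all on a single coupling space on which
\[
|\tilde{\Af}(\cC_{\gamma,k})| \;\le\; |\hat{\Af}(\cC_{\gamma,k})| \;\le\; |\Af(\cK_{C_\gamma,\eps})| \;\le\; |\Af|
\]
holds pointwise on the event $\E_{C_\gamma,\eps,n}$; the stochastic inequality of the claim is then immediate.

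The first link is obtained by an edge-by-edge subgraph coupling. For every $j \in \cup_{i=1}^{p_\ell} \cC_i$ we have $\tilde{W}^{(\ell,\gamma)-}_j = W_i^- \le w_j$, and for every $v_j \in \cC_{\gamma,k}$ (identified with the $k$ lex-smallest vertices of $\cC_\gamma$) the definition gives $\tilde{W}^{(\ell,\gamma)-}_{v_j} = C_\gamma \le w_{v_j}$, while both models normalise by the same $W_{[n]}(\bw)$. A standard product coupling therefore realises $CL'(\tilde{\bW}^{(\ell,\gamma)-})$ as a subgraph of $CL(\bw)$ restricted to the vertex set $(\cup_{i=1}^{p_\ell}\cC_i) \cup \cC_{\gamma,k}$. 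Since freezing $\cC_\gamma \setminus \cC_{\gamma,k}$ in the definition of $\hat{\Af}$ is equivalent to running bootstrap percolation on precisely that induced subgraph, monotonicity of the bootstrap rule in the edge set gives $|\tilde{\Af}(\cC_{\gamma,k})| \le |\hat{\Af}(\cC_{\gamma,k})|$ on the coupling. For the second link I would exploit the sorted-weight convention $w_1 \le \cdots \le w_n$: the set $\cC_{\gamma,k}$ is the lightest $k$-tuple in $\cC_\gamma$, whereas on $\E_{C_\gamma,\eps,n}$ the set $\cK_{C_\gamma,\eps}$ supplied by Lemma~\ref{lem:CKernelInfection} is a random $k$-subset of $\cC_\gamma$ whose vertices are at least as heavy. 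A label-permutation coupling inside $\cC_\gamma$ that sends $\cC_{\gamma,k}$ into $\cK_{C_\gamma,\eps}$ can only increase the edge probabilities incident to the initial set, and in passing from $\hat{\Af}$ to $\Af(\cK_{C_\gamma,\eps})$ the freezing restriction is removed; by monotonicity of bootstrap in both the initial set and the edge set this yields the second inequality. The third link is the content of Claim~\ref{clm:LowerInclusion}, which realises $\Af(\cK_{C_\gamma,\eps})$ and $\Af$ on the same realisation of $(CL(\bw),\Ao)$ with $\Af(\cK_{C_\gamma,\eps}) \subseteq \Af$. Composing the three couplings on the common realisation of $(CL(\bw),\Ao)$ produces the required pointwise chain.

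The delicate step is the second link, because $\hat{\Af}(\cC_{\gamma,k})$ and $\Af(\cK_{C_\gamma,\eps})$ operate on the same random graph but with different and statistically dependent initial sets, so a naive edge coupling does not suffice. The rescue comes from the sorted-weight convention: a relabelling within $\cC_\gamma$ that aligns $\cC_{\gamma,k}$ with a subset of $\cK_{C_\gamma,\eps}$ can only enlarge the neighbourhoods of the vertices used as initial seeds, and together with the removal of the freezing restriction this delivers the pointwise comparison on the coupling that the stochastic inequality demands.
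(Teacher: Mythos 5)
Your decomposition is exactly the one the paper uses: the chain $|\tilde{\Af}(\cC_{\gamma,k})|\leq_{st}|\hat{\Af}(\cC_{\gamma,k})|\leq_{st}|\Af(\cK_{C_\gamma,\eps})|$ combined with Claim~\ref{clm:LowerInclusion}, and your first link (pointwise domination of the weights with the common normalisation $W_{[n]}(\bw)$, plus the observation that freezing $\cC_\gamma\setminus\cC_{\gamma,k}$ is the same as running the process on the induced subgraph) and your third link coincide with the paper's. The genuine problem is your justification of the middle link. The set $\cK_{C_\gamma,\eps}$ is measurable with respect to the same realisation of $(CL(\bw),\Ao)$ -- it is read off from the frozen process of Lemma~\ref{lem:CKernelInfection} -- so a ``label-permutation coupling inside $\cC_\gamma$ that sends $\cC_{\gamma,k}$ into $\cK_{C_\gamma,\eps}$'' is a graph-dependent relabelling, not a coupling of two copies of the model. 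After such a relabelling the edges incident to the new seed set are no longer independent Bernoulli variables with probabilities $w_iw_j/W_{[n]}$, so the assertion that ``the permutation can only increase the edge probabilities incident to the initial set'' has no pointwise meaning, and monotonicity of bootstrap percolation in the edge set cannot be invoked. Concretely, conditioning on the outcome of the frozen process biases exactly the edges that were examined to produce $\cK_{C_\gamma,\eps}$ (those inside $\cC_\gamma$ and those between $\cC_\gamma$ and $\Ao$), and this conditioning is not monotone; ``the seeds are at least as heavy'' compares two models with fresh independent edges and does not by itself yield $|\hat{\Af}(\cC_{\gamma,k})|\leq_{st}|\Af(\cK_{C_\gamma,\eps})|$ when the seed set is a function of the graph. (To be fair, the paper states this intermediate inequality without proof; but your proposed mechanism for it does not work as written, and the sorted-label observation is not the missing ingredient.)

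The dependence you correctly flag is circumvented not by relabelling but by using only randomness the frozen process never examined. Since $\Ao\setminus\cC_\gamma\subseteq\Af$ as well, compare $\tilde{\Af}(\cC_{\gamma,k})$ with the closure of $\cK_{C_\gamma,\eps}\cup(\Ao\setminus\cC_\gamma)$ in which only vertices of $[n]\setminus\cC_\gamma$ are allowed to become infected; this closure is still contained in $\Af$, because its seed set lies in $\Af$ and restricting who may become infected only shrinks the closure. The edges this restricted spread uses -- from $\cK_{C_\gamma,\eps}$ to light vertices, and among light vertices -- are disjoint from the edges examined by the frozen process (which only looks at edges inside $\cC_\gamma$ and between $\cC_\gamma$ and $\Ao$), hence, conditionally on $(\Ao,\cK_{C_\gamma,\eps})$, they are still independent with their original probabilities. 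Now fix any bijection of the $k$ auxiliary seeds onto $\cK_{C_\gamma,\eps}$: every seed weight in $\tilde{\bW}^{(\ell,\gamma)-}$ equals $C_\gamma$, which is at most the weight of any vertex of $\cC_\gamma$, and the light weights satisfy $W_i^-\leq w_j$, so a product coupling realises the graph driving $\tilde{\Af}(\cC_{\gamma,k})$ as a subgraph of the (conditionally fresh) relevant edges of $CL(\bw)$; the extra seeds coming from $\Ao$ only help. This yields $|\tilde{\Af}(\cC_{\gamma,k})|\leq_{st}|\Af|$ on $\E_{C_\gamma,\eps,n}$ without any appeal to the ordering of the labels inside $\cC_\gamma$.
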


We will now apply Theorem~\ref{thm:FinalInfectionI} to the random variables that bound $|\Af|$ in Claims~\ref{clm:UpperDomination}
and~\ref{clm:LowerDomination}.
Let $\hat{y}_\eps^+,\hat{y}$ be the smallest positive solutions of
$$y = (1-\eps)~\ex{ \psi_r \left(W_F^* y\right) } + \eps,$$
and
$$y = \ex{ \psi_r \left(W_F^* y\right) },$$
respectively.

% Here $c^+(\gamma )$ is as above but we set $c^- (\gamma ) = 1 - \gamma \eps$.
% We let $\hat{y}_{\ell,\gamma,\eps}^+$ be the smallest positive solution of
% \begin{equation*}
% \begin{split} y = &{c^+(\gamma) \over d}\left( (1-\eps)~\ex{\psi_r \left({\bf 1}_{\{ W_F^{(\ell,\gamma)+} < C_\gamma \}}
% W_F^{(\ell,\gamma)+} y\right)}
% + \eps~\ex {{\bf 1}_{\{ U^{(\ell,\gamma)+} < C_\gamma \}} U^{(\ell,\gamma)+}} \right) \\
% & + {W_{\gamma}^+ \over d},
% \end{split}
% \end{equation*}
% and $\tilde{y}_{\ell,\gamma}$ be the smallest positive solution of
% $$ y = {c^-(\gamma) \over d}~\ex{ \psi_r \left({\bf 1}_{\{ \tilde{W}_F^{(\ell,\gamma)-} < C_\gamma \}} \tilde{W}_F^{(\ell,\gamma)-} y\right) }  + {\tilde{W}_{\gamma}^- \over d}.$$
For $\eps < \eps_0$ let $C$ be as in Lemma~\ref{lem:CKernelInfection} and let $\gamma < \gamma_2$
(cf. Theorem~\ref{thm:FinalInfectionI}) be such that $C = C_\gamma$.
Theorem~\ref{thm:FinalInfectionI} implies that for any $\delta \in (0,1)$
there exists an infinite set of natural numbers
$\mathcal{S}^1$ such that for every $\ell \in \mathcal{S}^1$ with probability $1-o(1)$
\begin{equation} \label{eq:UpperBoundI}
\begin{split}
{|\Af^+ (\Ao^{(\eps)} \cup \cC_\gamma^+ )| \over n} \leq (1+ \delta) ((1-\eps ) \ex {W_F \hat{y}_\eps^+} + \eps),
% \left( c^+(\gamma )(1-\eps)
% \ex { \psi_r \left( {\bf 1}_{\{ U^{(\ell,\gamma)+} < C_\gamma \}} U^{(\ell,\gamma)+}\hat{y}_{\ell,\gamma}^+\right)}
% +\eps~( 1-\gamma ) + \gamma^+ \right)
\end{split}
 \end{equation}
and an infinite set of natural numbers $\mathcal{S}^2$ such that for every $\ell \in \mathcal{S}^2$ with probability $1-o(1)$
\begin{equation} \label{eq:LowerBoundII}
{|\tilde{\Af} ( \cC_{\gamma, k} ) | \over n} \geq (1-\delta) \ex {W_F \hat{y}}
% \left( c^-(\gamma ) \ex { \psi_r \left({\bf 1}_{\{ \tilde{U}^{(\ell,\gamma)-} < C_\gamma \}} \tilde{U}^{(\ell,\gamma)-}
% \tilde{y}_{\ell,\gamma}\right)} + \gamma(1-\eps) \right).
\end{equation}
Hence,  Claims~\ref{clm:UpperDomination} and~\ref{clm:LowerDomination} together with
(\ref{eq:UpperBoundI}) and (\ref{eq:LowerBoundII}) imply that a.a.s.
% \begin{lemma} \label{lem:tightboundsI}
% There exists $\eps_0 \in (0,1)$ with the property that for all $0< \eps < \eps_0$ and $2 < \beta < 3$, there exists $\gamma \in (0,1)$
% and an infinite set of natural numbers $\mathcal{S}_\gamma$
% such that for every $\ell \in \mathcal{S}_\gamma$ and every $\delta  \in (0,1)$ we have with probability $1-o(1)$
$$ {|\Af| \over n} \leq (1+ \delta) ((1-\eps ) \ex {W_F \hat{y}_\eps^+} + \eps), $$
and
$$ {|\Af| \over n} \geq (1- \delta) \ex {W_F \hat{y}}. $$
But $y_\eps^+ \rightarrow \hat{y}$ as $\eps \rightarrow 0$ and Theorem~\ref{thm:power_law} follows.
%\end{proof}

\section{Proof of Theorem~\ref{thm:FinalInfectionI}}

In this section we will give the proof of Theorem~\ref{thm:FinalInfectionI}. At the moment, our analysis does not depend on the
parameters $\ell, \gamma$ and, to simplify notation, we will drop the superscript
$(\ell,\gamma)$. For $j=0,\ldots, r-1$, we denote by $\cC_{i,j}$ the subset of $\cC_i$ which consists of those vertices
of $\cC_i$ which have $j$ infected neighbours. We also denote by $\cC_{i,r}$ the subset of $\cC_i$ containing all those vertices that are
infected, that is, they have \emph{at least} $r$ infected neighbours. %Note that at the end of the process $\Af = \cup_{i=1}^t \cC_{ik}$.

We will determine the size of the final set of infected vertices exposing \emph{sequentially} the neighbours of each infected vertex and
keeping track of the number of infected neighbours an uninfected vertex has. In other words, we will be keeping track of the size
of the sets $\cC_{i,j}$. This method of exposure has also been applied in the analysis in~\cite{ar:JLTV10}. However, the inhomogeneity
in the present context bears additional difficulties as the evolutions of the sets $\cC_{i,j}$ are interdependent.

The \emph{sequential} exposure proceeds as follows. For $i=1,\ldots, p_\ell$ and $j=0,\ldots, r$, let $\cC_{i,j}(t)$ denote set
$\cC_{i,j}$ after the execution of the $t$th step.  Here $\cC_{i,j}(0)$ denotes the set $\cC_{i,j}$ before the beginning of the execution.
Furthermore, let $\cU (t)$ denote the set of infected \emph{unexposed} vertices after the execution of the $t$th step, with $\cU(0)$ denoting
the set of infected vertices before the beginning of the process.

At step $t\geq 1$, if $\cU(t-1)$ is non-empty,
\begin{enumerate}
\item[i.] choose a vertex $v$ uniformly at random from $\cU(t-1)$;
\item[ii.] expose the neighbours $v$ in the set $\bigcup_{i=1}^{p_\ell} \cup_{j=0}^{r-1} \cC_{i,j}(t-1)$;
\item[iii.] set $\cU(t) := \cU(t-1) \setminus \{ v\}$.
\end{enumerate}
The above set of steps is repeated for as long as the set $\cU$ is non-empty. The exposure of the neighbours of $v$ can be alternatively
thought of as a random assignment of a mark to each vertex of $\bigcup_{i=1}^{p_\ell} \cup_{j=0}^{r-1} \cC_{i,j}(t-1)$ independently
of every other vertex; if a vertex in $\cC_{i,j}(t-1)$ receives such a mark, then it is moved to $\cC_{i,j+1} (t)$. Hence, during the execution
of the $t$th step each vertex in $\cC_{i,j}(t-1)$ either remains a member of $\cC_{i,j}(t)$ or it is moved to $\cC_{i,j+1}(t)$.

\subsection{Conditional Expected Evolution}
Let $c_{i,j}$ denote the size of the set $\cC_{i,j}$ for all $i=1,\ldots, p_\ell$ and $j=0,\ldots, r-1$.
Our equations will also incorporate the size of $\cU$ at time $t-1$, which we denote by $u (t-1)$, as well as the total weight of
vertices $\cU$, which we denote by $w_{\cU} (t-1)$.
For these values of $i$ and $j$ we let $\V (t) = \left( u(t), w_{\cU} (t), ( c_{i,j}(t) )_{i,j}\right)$.
This vector determines the state of the process after step $t$.
We will now give the expected change of $c_{i,j}$ during the execution of step $t$, conditional on $\V (t-1)$. If step $t$ is to be
executed, it is necessary to have $u(t-1)>0$, which we will assume to be the case.
We begin with $c_{i,0}$, for $i=1,\ldots, p_\ell$, having
\begin{equation} \label{eq:ci0Expected}
\begin{split}
\ex {c_{i,0} (t) - c_{i,0} (t-1) \ | \ \V (t-1)} &= -c_{i,0} (t-1) \sum_{v \in \cU (t-1)} { W_i w_v \over W_{[n]}}~{1\over u (t-1)} \\
& = -c_{i,0} (t-1)~{ W_i \over W_{[n]}}~{w_{\cU} (t-1)\over u (t-1)}.
\end{split}
\end{equation}
The evolution of $c_{i,j}$ for $0< j < r$ involves a term that accounts for the ``losses" from the set $c_{i,j}$ as well as a term which
describes the expected ``gain" from the set $c_{i,j-1}$. For $i= 1,\ldots, p_\ell$ and $0< j < r$ we have
\begin{equation} \label{eq:cijExpected}
\begin{split}
 &\ex {  c_{i,j} (t) - c_{i,j} (t-1) \ | \ \V (t-1)} \\
= &  c_{i,j-1}(t-1) \sum_{v \in \cU (t-1)} { W_i w_v \over W_{[n]}}~{1\over u (t-1)}
- c_{i,j} (t-1) \sum_{v \in \cU (t-1)} { W_i w_v \over W_{[n]}}~{1\over u (t-1)} \\
=& (c_{i,j-1} (t-1) - c_{i,j} (t-1))~{ W_i \over W_{[n]}}~{w_{\cU} (t-1)\over u (t-1)}.
\end{split}
\end{equation}
% For $j=r$, the only loss from the set $c_{i,r}$ is a possibly one vertex that might be selected and removed among the vertices of $\cU (t-1)$
% during the execution of sub-step $t$(i) of the above algorithm. Hence, we have
% \begin{equation} \label{eq:cikExpected}
% \begin{split}
% &\ex {c_{i,r} (t) - c_{i,r} (t-1) \ | \ \V (t-1)} = \\
% &  c_{i,r-1}(t-1) \sum_{v \in \cU (t-1)} { W_i w_v \over W_{[n]}}~{1\over u (t-1)}
% -  \sum_{v \in \cC_{i,r} (t-1)}~{1\over u (t-1)} \\
% &= c_{i,r-1}(t-1)~{ W_i \over W_{[n]}}~{w_{\cU} (t-1)\over u (t-1)} - {c_{i,r-1} \over u(t-1)}.
% \end{split}
% \end{equation}
Finally, we will need to describe the expected change in the size of $\cU$ during step $t$. In this case, \emph{one} vertex is removed
from $\cU (t -1)$, but additional vertices may be added from the sets $\cC_{i,r-1} (t-1)$. More specifically, we write
\begin{equation} \label{eq:uExpected}
\begin{split}
\ex {u (t) - u (t-1) \ | \ \V (t-1)} &= - 1 + \sum_{i=1}^{p_\ell} c_{i,r-1} (t-1)~\sum_{v \in \cU (t-1)} { W_i w_v \over W_{[n]}}~{1\over u (t-1)}
\\
& = -1 + {w_{\cU} (t-1)\over u (t-1)} \sum_{i=1}^{p_\ell} {W_i \over W_{[n]}}~c_{i,r-1} (t-1).
\end{split}
\end{equation}
Similarly, the expected change in the weight of $\cU$ during step $t$ is as follows:
\begin{equation} \label{eq:wuExpected}
\begin{split}
& \ex  {w_{\cU} (t) - w_{\cU} (t-1) \ | \ \V (t-1)} \\
= &  - {w_{\cU} (t-1)\over u (t-1)} + \sum_{i=1}^{p_\ell} W_i c_{i,r-1} (t-1)~\sum_{v \in \cU (t-1)} { W_i w_v \over W_{[n]}}~{1\over u (t-1)} \\
= & -{w_{\cU} (t-1)\over u (t-1)} + {w_{\cU} (t-1)\over u (t-1)} \sum_{i=1}^{p_\ell} {W_i^2 \over W_{[n]}}~c_{i,r-1} (t-1).
\end{split}
\end{equation}

\subsection{Continuous Approximation}

The above quantities will be approximated by the solution of a system of ordinary differential equations. We will consider
a collection of continuous differentiable functions $\gamma_{i,j}: [0,\infty) \rightarrow \mathbb{R}$, for all $i=1,\ldots, p_\ell$ and
$j=0,\ldots, r-1$, through which we will approximate the quantities $c_{i,j}$. To be more precise, $\gamma_{i,j}$ will be shown to be close
to $c_{i,j}/n$. Moreover, $u$ and $w_{\cU}$ will be approximated through the continuous differentiable functions
$\nu, \mu_{\cU} : [0,\infty) \rightarrow \mathbb{R}$
in a similar way. We will also use another continuous function $G: [0,\infty) \rightarrow \mathbb{R}$ which will approximate
the ratio $w_{\cU} / u$; note that this is the average weight of the set of infected unexposed vertices.

The system of differential equations that determine the functions $\gamma_{i,j}$ is as follows:
\begin{equation}\label{eq:DifferentialEquations}
\begin{split}
{d \gamma_{i,0} \over d\tau} &= - \gamma_{i,0} (\tau) {W_i \over d} G (\tau), \\
{d \gamma_{i,j} \over d\tau} &= \left(\gamma_{i,j-1} (\tau) - \gamma_{i,j} (\tau) \right) {W_i \over d} G (\tau), \quad 
\mbox{ $1 \le j \le r-1$}.
%  \\
% {d \gamma_{i,r} \over d\tau} &= \gamma_{i,r-1}(\tau) ~{W_i \over d}~F (\tau) - {\gamma_{i,r-1} (\tau) \over \nu (\tau)}.
\end{split}
\end{equation}
The continuous counterparts of (\ref{eq:uExpected}) and (\ref{eq:wuExpected}) are
\begin{equation} \label{eq:uDifferentialEquation}
{d \nu \over d \tau} = -1 + G (\tau) \sum_{i=1}^{p_\ell} {W_i \over d}~\gamma_{i,r-1} (\tau),
\end{equation}
and
\begin{equation} \label{eq:wuDifferentialEquation}
{d \mu_{\cU} \over d\tau} = - G (\tau) + G (\tau) \sum_{i=1}^{p_\ell} {W_i^2 \over d}~\gamma_{i,r-1} (\tau).
\end{equation}
The initial conditions are 
\begin{equation} \label{eq:InitialConditions}
\begin{split}
\nu (0) &= p~(1-\gamma) + \gamma ',  \ \mbox{for $p \in [0,1)$ (recall that $p$ is the initial infection rate)}, \\
\mu_{\cU} (0) &=W_{\gamma}' +p \sum_{i=1}^{p_\ell} W_i \gamma_i , \\
\gamma_{i,0}(0) &= (1-p) \gamma_i , \\
\gamma_{i,j}(0) &= 0, \ \mbox{for $j=1,\ldots, r-1$}.
\end{split}
\end{equation}
In the following proposition, we will express the formal solution of the above system in terms of $\gamma_{i,0} (\tau)$.
\begin{proposition} \label{prop:Solution}
With $I(\tau) = \int_0^{\tau} G(s) ds$, we have
$$ \gamma_{i,0}(\tau) = \gamma_{i,0} (0) \exp \left( - {W_i  }  I(\tau ) / d\right) .$$
Moreover, for $1\le j \le r-1$ 
$$ \gamma_{i,j} (\tau) = {\gamma_{i,0}(\tau) \over j!} \log^j \left( {\gamma_{i,0} (0) \over \gamma_{i,0} (\tau)} \right). $$
\end{proposition}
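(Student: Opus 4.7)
The plan is to treat the system~\eqref{eq:DifferentialEquations} for each fixed $i$ as a triangular cascade of first-order linear ODEs and solve them one by one, using the stated initial conditions. Let $a_i:=W_i/d$ throughout, so that the equations read $\gamma_{i,0}' = -a_i G\,\gamma_{i,0}$ and $\gamma_{i,j}' + a_i G\,\gamma_{i,j}=a_i G\,\gamma_{i,j-1}$ for $1\le j\le r-1$.

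First, I would handle $\gamma_{i,0}$ directly. Separating variables gives $(\log \gamma_{i,0})' = -a_i G(\tau)$. Integrating from $0$ to $\tau$ and using $I(\tau)=\int_0^\tau G(s)\,ds$ and the initial value $\gamma_{i,0}(0)=(1-p)\gamma_i>0$ yields
\[
\gamma_{i,0}(\tau) \;=\; \gamma_{i,0}(0)\,e^{-a_i I(\tau)}.
\]
In particular $\log(\gamma_{i,0}(0)/\gamma_{i,0}(\tau)) = a_i I(\tau)$, which is the identity that powers the rest of the argument.

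Next, I would prove the formula for $\gamma_{i,j}$ by induction on $j$, using the integrating factor $e^{a_i I(\tau)} = \gamma_{i,0}(0)/\gamma_{i,0}(\tau)$. Multiplying the $j$-th equation by this factor rewrites it as
\[
\frac{d}{d\tau}\bigl(e^{a_i I(\tau)}\gamma_{i,j}(\tau)\bigr) \;=\; a_i G(\tau)\, e^{a_i I(\tau)}\gamma_{i,j-1}(\tau).
\]
Substituting the inductive hypothesis $\gamma_{i,j-1}(\tau)= \frac{\gamma_{i,0}(\tau)}{(j-1)!}(a_i I(\tau))^{j-1}$ and using $e^{a_iI(\tau)}\gamma_{i,0}(\tau)=\gamma_{i,0}(0)$ collapses the right-hand side to $\frac{\gamma_{i,0}(0)\,a_i^{\,j}}{(j-1)!}\,I(\tau)^{j-1}G(\tau)$. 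Since $G\,d\tau = dI$, this is exactly the derivative of $\frac{\gamma_{i,0}(0)\,a_i^{\,j}}{j!}\,I(\tau)^{j}$. Integrating from $0$ to $\tau$ and invoking the initial condition $\gamma_{i,j}(0)=0$ eliminates the constant, giving
\[
e^{a_i I(\tau)}\gamma_{i,j}(\tau) \;=\; \frac{\gamma_{i,0}(0)}{j!}\,(a_i I(\tau))^{j}.
\]
Dividing through by $e^{a_i I(\tau)}$ and rewriting $a_i I(\tau) = \log(\gamma_{i,0}(0)/\gamma_{i,0}(\tau))$ yields the stated closed form.

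There is no genuine obstacle here; the proposition is a direct integration. The only point that requires mild care is the base case $j=1$ of the induction (where the $\log^0$ factor is just $1$) and the consistent use of the identity $e^{a_i I}=\gamma_{i,0}(0)/\gamma_{i,0}(\tau)$ to convert the integrating factor into the logarithmic form that appears in the statement. Existence and uniqueness of the solution on $[0,\tau_{\max})$ follow from the Picard--Lindel\"of theorem, since $G$ is continuous by construction, so the explicit formula we derive is in fact \emph{the} solution of~\eqref{eq:DifferentialEquations} with the initial conditions~\eqref{eq:InitialConditions}.
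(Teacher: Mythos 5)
Your proposal is correct and follows essentially the same route as the paper's proof: separation of variables for $\gamma_{i,0}$, then induction on $j$ treating each equation as a first-order linear ODE solved via an integrating factor (equivalently, the variation-of-parameters formula used in the paper). The only difference is computational and in your favour: by rewriting $\log\bigl(\gamma_{i,0}(0)/\gamma_{i,0}(\tau)\bigr)$ as $(W_i/d)\,I(\tau)$ you reduce the inductive step to integrating $I^{j-1}G = \bigl(I^{j}/j\bigr)'$, which avoids the paper's change of variables and the integration-by-parts evaluation of $\int \log^{j-1}(x)/x\,dx$.
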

\begin{proof}
%The statement can be shown easily by computing the derivates of the given expressions for the $\gamma_{i,j}$s and verifying that they satisfy the system of differential equations. We omit the details.
The expression for $\gamma_{i,0} (\tau)$ can be obtained through separation of variables -- we omit the details.
The remaining expressions will be obtained by induction.
Let us consider the differential equation for $\gamma_{i,j}$, where $0< j < r$, assuming that we have derived the expression for
$\gamma_{i,j-1}$. This differential  equation is a first order ordinary differential equation of the form
$y'(\tau) = a(\tau )y(\tau ) + b(\tau)$ with initial condition $y(0)=0$. Its general solution is equal to
$$ y(\tau) = \exp \left( \int_0^{\tau} a(s) ds \right) \cdot \int_0^{\tau} b(s) \exp \left( - \int_0^{s} a(\rho) d\rho \right) ds. $$
Here, we have
$$ a(\tau) = - {W_i \over d} G(\tau ), \ b(\tau) = \gamma_{i,j-1}(\tau ) {W_i \over d} G(\tau ) =
{W_i \over d}~{\gamma_{i,0}(\tau) \over (j-1)!}~\log^{j-1} \left( {\gamma_{i,0} (0) \over \gamma_{i,0} (\tau)} \right)  G(\tau ),$$
by the induction hypothesis. Thereby and using the expression for $\gamma_{i,0}$ we obtain
\begin{equation} \label{eq:Factor1} \exp \left( \int_0^{s} a(\rho) d\rho \right)= {\gamma_{i,0} (s ) \over \gamma_{i,0} (0)}.
\end{equation}
Hence
\begin{equation} \label{eq:Factor2}
\begin{split}
\int_0^{\tau}  b(s) \exp & \left(  - \int_0^{s} a(\rho) d\rho \right) ds = \\
& {W_i \over d (j-1)!}~\int_0^{\tau} \gamma_{i,0} (s) \log^{j-1} \left( {\gamma_{i,0} (0) \over \gamma_{i,0} (s)} \right)~G(s)~
{\gamma_{i,0} (0) \over \gamma_{i,0} (s)} ds \\
& = \gamma_{i,0}(0)~{W_i \over d (j-1)!}~\int_0^{\tau} \gamma_{i,0} (s)  \log^{j-1} \left( {\gamma_{i,0} (0) \over \gamma_{i,0} (s)} \right)~{G(s) \over \gamma_{i,0}(s)} ds \\
& = - {\gamma_{i,0} (0) \over (j-1)!}~\int_0^{\tau} {1\over \gamma_{i,0}(s)}
\log^{j-1} \left( {\gamma_{i,0} (0) \over \gamma_{i,0} (s)} \right)
\left(-  \gamma_{i,0}(s) {W_i \over d}G(s)  \right) ds \\
& \stackrel{(\ref{eq:DifferentialEquations})}{=}
- {1 \over (j-1)!}~\int_0^{\tau} {\gamma_{i,0} (0)\over \gamma_{i,0}(s)}
\log^{j-1} \left( {\gamma_{i,0} (0) \over \gamma_{i,0} (s)} \right)
\left( {d \gamma_{i,0}\over ds} \right) ds \\
& = - {\gamma_{i,0} (0) \over (j-1)!}~\int_0^{\tau} {\gamma_{i,0} (0)\over \gamma_{i,0}(s)}
\log^{j-1} \left( {\gamma_{i,0} (0) \over \gamma_{i,0} (s)} \right) d \left( {\gamma_{i,0}\over \gamma_{i,0}(0)} \right) \\
& \stackrel{\left( x= \gamma_{i,0}/\gamma_{i,0}(0) \right)}{=}
- {\gamma_{i,0} (0) \over (j-1)!}~\int_{1}^{\gamma_{i,0}(\tau)/\gamma_{i,0}(0)} {1\over x} \log^{j-1} \left({1\over x} \right) dx \\
&= (-1)^{j-1} {\gamma_{i,0} (0) \over (j-1)!}~\int_{\gamma_{i,0}(\tau)/\gamma_{i,0}(0)}^1 {\log^{j-1} (x) \over x} dx.
\end{split}
\end{equation}
For $j=1$, the last integral equals $\log (\gamma_{i,0}(0)/\gamma_{i,0}(\tau))$.
For $j\geq 2$, it can be calculated using integration by parts.
\begin{equation*}
\int  {\log^{j-1} (x) \over x} dx = \int \left( \log (x) \right)' \log^{j-1}(x) dx = \log^j (x) - (j-1) \int {\log^{j-1}(x)\over x}dx,
\end{equation*}
which yields
\begin{equation*}
\int {\log^{j-1}(x)\over x}dx = {\log^j (x) \over j}.
\end{equation*}
Thereby, the last integral in (\ref{eq:Factor2}) is
\begin{equation*}
\int_{\gamma_{i,0}(\tau)/\gamma_{i,0}(0)}^1 {\log^{j-1} (x) \over x} dx = - {1\over j}~ \log^j \left(
{\gamma_{i,0}(\tau) \over \gamma_{i,0}(0)} \right) = {(-1)^{j+1}\over j}~\log^j \left( {\gamma_{i,0}(0) \over \gamma_{i,0}(\tau)} \right).
\end{equation*}
Substituting this into (\ref{eq:Factor2}) we obtain:
\begin{equation} \label{eq:Factor2Final}
\int_0^{\tau}  b(s) \exp  \left(  - \int_0^{s} a(\rho) d\rho \right) ds =
{\gamma_{i,0} (0) \over j!}~\log^j \left( {\gamma_{i,0}(0) \over \gamma_{i,0}(\tau)} \right).
\end{equation}
Combining (\ref{eq:Factor1}) and (\ref{eq:Factor2Final}), we have
$$ \gamma_{i,j} (\tau) = {\gamma_{i,0} (\tau ) \over j!}~\log^j \left( {\gamma_{i,0}(0) \over \gamma_{i,0}(\tau)} \right).$$
\end{proof}
In the sequel we will use the expressions for $\gamma_{i,r-1}$, where $1 \le i \le p_\ell$, and  integrate 
(\ref{eq:uDifferentialEquation}), (\ref{eq:wuDifferentialEquation}) in order to deduce the expressions for $\nu$ and $\mu_{\cU}$.
\begin{proposition} \label{eq:uSolution}
We have
\begin{equation*}
\nu (\tau) = p~(1-\gamma ) +\gamma' - \tau + (1-p) \sum_{i=1}^{p_\ell}  \gamma_i \PRO {\Po \left( {W_i \over d} I(\tau) \right) \geq r}
 \end{equation*}
 and
\begin{equation*}
\mu_{\cU} (\tau ) = W_{\gamma}' +p \sum_{i=1}^{p_\ell} W_i \gamma_i - I(\tau )
+ (1-p) \sum_{i=1}^{p_\ell}  W_i \gamma_i \PRO {\Po \left( {W_i \over d} I(\tau) \right) \geq r}.
\end{equation*}
\end{proposition}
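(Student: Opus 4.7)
The plan is to integrate the two differential equations (\ref{eq:uDifferentialEquation}) and (\ref{eq:wuDifferentialEquation}) directly, after substituting the closed-form expression for $\gamma_{i,r-1}$ that follows from the previous proposition. Writing $I(\tau) = \int_0^\tau G(s)\, ds$, the formula $\gamma_{i,0}(\tau) = \gamma_{i,0}(0)\exp(-W_i I(\tau)/d)$ gives $\log(\gamma_{i,0}(0)/\gamma_{i,0}(\tau)) = W_i I(\tau)/d$, so
\[
	\gamma_{i,r-1}(\tau) = \frac{\gamma_{i,0}(\tau)}{(r-1)!} \left(\frac{W_i I(\tau)}{d}\right)^{r-1}
	= (1-p)\gamma_i \cdot \frac{e^{-W_i I(\tau)/d}\,(W_i I(\tau)/d)^{r-1}}{(r-1)!},
\]
recognising the last fraction as $\PP[\Po(W_i I(\tau)/d) = r-1]$.

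The key observation is that $\frac{d}{dx}\PP[\Po(x)\geq r] = \frac{e^{-x}x^{r-1}}{(r-1)!}$, since in the series $\sum_{j\geq r}\frac{e^{-x}x^j}{j!}$ the incoming term at index $j=r$ and the outgoing terms at higher $j$ telescope, leaving exactly the Poisson mass at $r-1$. Combined with the chain rule, letting $x_i(\tau) := W_i I(\tau)/d$ so that $x_i'(\tau) = W_i G(\tau)/d$, this gives
\[
	G(\tau)\frac{W_i}{d}\gamma_{i,r-1}(\tau) = (1-p)\gamma_i \cdot x_i'(\tau)\cdot \frac{e^{-x_i(\tau)}x_i(\tau)^{r-1}}{(r-1)!} = (1-p)\gamma_i\cdot \frac{d}{d\tau}\PP[\Po(x_i(\tau))\geq r].
\]

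With this identity the right-hand sides of (\ref{eq:uDifferentialEquation}) and (\ref{eq:wuDifferentialEquation}) become exact derivatives. Integrating (\ref{eq:uDifferentialEquation}) from $0$ to $\tau$ and using the initial value $\nu(0) = p(1-\gamma) + \gamma'$ together with $\PP[\Po(0)\geq r] = 0$ for $r\geq 1$ yields
\[
	\nu(\tau) = p(1-\gamma) + \gamma' - \tau + (1-p)\sum_{i=1}^{p_\ell}\gamma_i\,\PP\!\left[\Po(W_i I(\tau)/d)\geq r\right].
\]
For $\mu_{\cU}$ the computation is analogous: a factor of $W_i$ appears in front of $\frac{d}{d\tau}\PP[\Po(x_i(\tau))\geq r]$, the first summand $-G(\tau)$ integrates to $-I(\tau)$, and the initial value $\mu_{\cU}(0) = W_\gamma' + p\sum_i W_i\gamma_i$ gives the stated closed form.

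No step presents a genuine obstacle; the only care required is verifying the telescoping derivative identity for the Poisson tail and correctly carrying the initial conditions from~(\ref{eq:InitialConditions}). The proof is essentially a bookkeeping exercise once one spots that the integrand has been engineered to be the $\tau$-derivative of $\PP[\Po(W_i I(\tau)/d)\geq r]$.
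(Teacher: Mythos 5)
Your proof is correct, and it follows the same overall skeleton as the paper's: both start from the closed form for $\gamma_{i,r-1}$ supplied by Proposition~\ref{prop:Solution} (together with $\log\left(\gamma_{i,0}(0)/\gamma_{i,0}(\tau)\right) = W_i I(\tau)/d$ and the initial conditions~(\ref{eq:InitialConditions})) and then integrate (\ref{eq:uDifferentialEquation}) and (\ref{eq:wuDifferentialEquation}). The difference is in how the integration is executed. The paper substitutes $\frac{W_i}{d}\gamma_{i,0}(s)G(s) = -\frac{d\gamma_{i,0}}{ds}$, changes variables to $x=\gamma_{i,0}/\gamma_{i,0}(0)$, and evaluates $\int \log^{r-1}(1/x)\,dx$ by repeated integration by parts, only recognising the truncated exponential sum as a Poisson tail at the very end. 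You instead observe up front that $\gamma_{i,r-1}(\tau) = (1-p)\gamma_i\, e^{-x_i(\tau)}x_i(\tau)^{r-1}/(r-1)!$ with $x_i(\tau)=W_iI(\tau)/d$, and invoke the telescoping identity $\frac{d}{dx}\PP\left[\Po(x)\geq r\right] = e^{-x}x^{r-1}/(r-1)!$, so that the integrand is an exact $\tau$-derivative and the integration is immediate. This buys you two things: it avoids the change of variables and the by-parts recursion entirely (the paper is, in effect, rederiving that Poisson-tail identity by hand), and it makes the $\mu_{\cU}$ case genuinely one-line (just an extra factor $W_i$ and the term $-G(\tau)$ integrating to $-I(\tau)$), whereas the paper simply declares that case analogous and omits it. Both arguments are complete and rigorous; yours is the cleaner bookkeeping.
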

\begin{proof}
Applying Proposition~\ref{prop:Solution} to \eqref{eq:uDifferentialEquation} yields
\begin{equation*}
{d \nu \over d \tau} = -1 + G (\tau) \sum_{i=1}^{p_{\ell}} {W_i \over d}
~{\gamma_{i,0}(\tau) \over (r-1)!} \log^{r-1} \left( {\gamma_{i,0} (0) \over \gamma_{i,0} (\tau)} \right).
\end{equation*}
By integrating this expression we obtain
\begin{equation} \label{eq:uEquation}
\begin{split}
\nu (\tau ) &= \nu (0) - \tau + {1\over (r-1)!}\sum_{i=1}^{p_\ell} \int_0^{\tau} {W_i \over d}
~\gamma_{i,0}(s) G(s)  \log^{r-1} \left( {\gamma_{i,0} (0) \over \gamma_{i,0} (s)} \right) ds \\
%&=  \nu (0) - \tau - {1\over (r-1)!}\sum_{i=1}^{p_\ell} \int_0^{\tau} \left( -{W_i \over d}
%~\gamma_{i,0}(s) G(s) \right)  \log^{r-1} \left( {\gamma_{i,0} (0) \over \gamma_{i,0} (s)} \right) ds \\
& \stackrel{(\ref{eq:DifferentialEquations})}{=}
\nu (0) - \tau - {1\over (r-1)!}\sum_{i=1}^{p_\ell} \int_0^{\tau}
\left({d \gamma_{i,0} \over ds}\right) \, \log^{r-1} \left( {\gamma_{i,0} (0) \over \gamma_{i,0} (s)} \right)  ds \\
&= \nu (0) - \tau - {1 \over (r-1)!}\sum_{i=1}^{p_\ell} \gamma_{i,0}(0)  \int_1^{\gamma_{i,0}(\tau)/\gamma_{i,0}(0)}
\log^{r-1} \left( {1\over x}\right) dx.
\end{split}
\end{equation}
We calculate the last integral substituting $y$ for $1/x$ and using integration by parts. We have
\begin{equation*}
\begin{split}
\int \log^{r-1} \left( {1\over x} \right) dx &= - \int {\log^{r-1} (y) \over y^2 } dy = \int \left({1\over y} \right)' \log^{r-1} (y) dy \\
& = { \log^{r-1} (y) \over y} - (r-1) \int {\log^{r-2} (y) \over y^2} dy.
\end{split}
\end{equation*}
As $\int {1\over y^2} dy = - {1\over y}$, dividing and multiplying by $(r-1)!$, we obtain
$$ \int \log^{r-1} \left( {1\over x} \right) dx = {(r-1)! \over y}~ \sum_{i=0}^{r-1} {\log^i (y) \over i!},$$
where $y=1/x$. Thereby, for all $i=1,\ldots, p_\ell$ we have
\begin{equation*}
\begin{split}
\int_1^{\gamma_{i,0}(\tau)/\gamma_{i,0}(0)} \log^{r-1} \left( {1\over x}\right) dx =
(r-1)! \left( {\gamma_{i,0}(\tau ) \over \gamma_{i,0}(0 )}~\sum_{i=0}^{r-1}{1\over i!}
\log^i \left({\gamma_{i,0}(0) \over \gamma_{i,0}(\tau)} \right) - 1\right).
\end{split}
\end{equation*}
Substituting the above into (\ref{eq:uEquation}) we obtain
$$
\nu (\tau) = \nu (0) - \tau  + \sum_{i=1}^{p_\ell} \gamma_{i,0}(0)~ \left(1- {\gamma_{i,0}(\tau ) \over \gamma_{i,0}(0 )}~\sum_{j=0}^{r-1}{1\over j!} \log^j \left({\gamma_{i,0}(0) \over \gamma_{i,0}(\tau)} \right) \right).
$$
Observe now that the expression in brackets is equal to the probability that a Poisson distributed random variable
with parameter $\log \left( \gamma_{i,0}( 0) / \gamma_{i,0}(\tau) \right)$ is at least $r$.
But by Proposition~\ref{prop:Solution}, we have
$$\log \left( {\gamma_{i,0}( 0) \over \gamma_{i,0}(\tau)} \right) = {W_i \over d} I(\tau). $$
Also, recall that by (\ref{eq:InitialConditions}) $\gamma_{i,0} (0) = (1-p) \gamma_i$, for each $i=1,\ldots, p_\ell$, and
$\nu (0) = p~( 1- \gamma )+\gamma'$.
Hence
\begin{equation*}
\nu (\tau) = p~(1-\gamma ) + \gamma' - \tau + (1-p) \sum_{i=1}^\ell  \gamma_i \PRO {\Po \left( {W_i \over d} I(\tau) \right) \geq r}.
 \end{equation*}
 The expression of $\mu_{\cU}$ is obtained along the same lines and we omit its proof.
\end{proof}

\subsection{Wormald's Theorem}
We summarize here the method introduced by Wormald in~\cite{Worm95,Worm99} for the analysis of a discrete random process by using differential equations. Recall that a function $f(u_1, . . . , u_{b+1})$ satisfies a Lipschitz condition in a domain $D \subseteq \RR^{b+1}$ if there is a constant $L > 0$
such that
$$|f(u_1, . . . , u_{b+1})-f(v_1, . . . , v_{b+1})| \leq L \max _{1\leq i \leq b+1}|u_i-v_i|$$
for all $(u_1, ..., u_{b+1}), (v_1, ...., v_{b+1}) \in D$. For the random process $(Y_1(t), ..., Y_b(t))\in \mathbb{R}^b$, 
the~\emph{stopping time} $T_D(Y_1, ..., Y_b)$ is defined to be the minimum $t$ such that $$(t/n; Y_1(t)/n, ..., Y_b(t)/n) \notin D.$$
This is written as $T_D$ when $Y_1, ..., Y_b$ are understood from the context.

%The following theorem is the Theorem 5.1 of~\cite{Worm99}. In it, "uniformly" refers to the convergence implicit in the $o()$ terms. Hypothesis $(1)$ ensures that $Y_t$ does not change too quickly throughout the process. Hypothesis $(2)$ tells us what we expect for the rate of change to be, and property $(3)$ ensures that this rate does not change too quickly.

\begin{theorem}[\cite{Worm99}]
\label{thm-eqdif1}
Let $b,n \in \mathbb{N}$. For $1 \leq j \leq b$, suppose that $Y^{(n)}_j(t)$ is a sequence of real-valued random variables such that $0 \leq Y^{(n)}_j \leq C n$ for some constant $C > 0$. Let $H_t$ be the history up to time $t$, i.e., the sequence $\{Y^{(n)}_j(k), \ 0 \leq j \leq b, \ 0 \leq k \leq t\}$. Suppose also that for some bounded connected open set $D \subseteq \RR^{b+1}$ containing the intersection of $\{(t,z_1,...,z_b): t\geq 0\}$ with some neighborhood of
$$\left\{(0,z_1,...,z_b):\PP(Y^{(n)}_j(0)=z_j n, 1\leq j\leq b) \neq 0 \mbox{ for some n}\right\},$$
the following three conditions are satisfied:
\begin{enumerate}
  \item {\rm(Boundedness).} For some functions $\omega=\omega(n)$ and $\lambda=\lambda(n)$ with $\lambda^4\log n < \omega < n^{2/3}/\lambda$ and $\lambda\to \infty$ as $n\to \infty$, for all $l\leq b$ and uniformly for all $t<T_D$,
  $$\PP \left(|Y^{(n)}_l(t+1)-Y^{(n)}_l(t)|>\frac{\sqrt{\omega}}{\lambda^2\sqrt{\log n}}\mid H_t\right) = o(n^{-3});$$

  \item {\rm(Trend).} For all $l \leq b$ and uniformly over all $t<T_D$,
  $$\EE[Y^{(n)}_l(t+1)-Y^{(n)}_l(t)|H_{t}] = f_l(t/n,Y_1^{(n)}(t)/n,...,Y_b^{(n)}(t)/n) + o(1) ;$$

  \item {\rm(Lipschitz).}  For each $l$ the function $f_l$ is continuous and satisfies a Lipschitz condition on $D$ with all Lipschitz constants uniformly bounded.
\end{enumerate}
Then the following hold.
\begin{itemize}
  \item[$(a)$] For $(0,\hat{z}_1,...,\hat{z}_b) \in D$, the system of differential equations
  $$\frac{dz_l}{ds}=f_l(s,z_1,...,z_l), \ \ l=1,...,b ,$$ has a unique solution in $D$, $z_l:\RR \rightarrow \RR$ for $l=1,\dots,b$, which passes through $z_l(0)=\hat{z}_l,$ $l=1,\dots,b,$, and which extends to points arbitrarily close to the boundary of $D$.

\item[$(b)$] We have
$$Y_l^{(n)}(t)=n z_l(t/n) + o_p(n)$$
uniformly for  $0 \leq t \leq \min\{\sigma n,T_{D}\}$ and for each $l$. Here $z_l$ is the solution in (a) with $\hat{z}_l = Y^{(n)}_l(0)/n$, and $\sigma = \sigma_D(n)$ is the supremum of those $s$ to which the solution can be extended.

%\item[$\left(c\right)$]  Let $D \subseteq \RR ^{b+1}$ and assume that the Boundedness and Trend hypotheses are verified but only within the restricted range $t < T_{D}$ of $t$. Then (a) and (b) hold as before, after replacing $0 \leq t \leq \sigma n$ by  $0 \leq t \leq \min\{\sigma n,T_{D}\}$.
\end{itemize}
\end{theorem}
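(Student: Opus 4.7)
The plan is to apply Wormald's differential equation method (Theorem~\ref{thm-eqdif1}) to the sequential exposure process of Section~4. The state vector $\V(t) = (u(t), w_\cU(t), (c_{i,j}(t))_{i \in [p_\ell],\, 0 \leq j \leq r-1})$ consists of $p_\ell r + 2$ real-valued random variables. The conditional one-step increments computed in~\eqref{eq:ci0Expected}--\eqref{eq:wuExpected}, after rescaling by $n$ and substituting $G = \mu_\cU/\nu$, are precisely the trend functions matching the ODE system~\eqref{eq:DifferentialEquations}--\eqref{eq:wuDifferentialEquation}. On the domain $D_\eta := \{u/n \geq \eta\}$ with $\eta>0$ small, I would verify Wormald's three hypotheses. \emph{Boundedness:} a step affects each coordinate by at most the degree of the exposed vertex, which in $CL'(\bW^{(\ell,\gamma)})$ is stochastically dominated by a binomial of mean $O(1)$ since all weights lie in $[x_0,2C_\gamma]$, giving single-step changes $O(\log n)$ with probability $1 - n^{-\omega(1)}$. \emph{Trend:} immediate from~\eqref{eq:ci0Expected}--\eqref{eq:wuExpected}. \emph{Lipschitz:} the drift functions are rational in the state variables with denominators $u/n$ and $W_{[n]}/n$, both bounded away from zero on $D_\eta$.

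With the explicit solutions furnished by Propositions~\ref{prop:Solution} and~\ref{eq:uSolution}, let $\tau^{*(\ell,\gamma)}$ be the first positive zero of $\nu$ and set $y^* := I(\tau^{*(\ell,\gamma)})/d$. Because the finitary weights lie in $[x_0, 2C_\gamma]$, the ratio $G = \mu_\cU/\nu$ stays confined to the same range throughout, so $\mu_\cU$ vanishes simultaneously with $\nu$ at $\tau^{*(\ell,\gamma)}$. The two identities
\[
\tau^{*(\ell,\gamma)} = p(1-\gamma) + \gamma' + (1-p)\sum_{i=1}^{p_\ell}\gamma_i\,\psi_r(W_i y^*) \qquad\text{and}\qquad d y^* = W_\gamma' + p\sum_i W_i\gamma_i + (1-p)\sum_i W_i\gamma_i\,\psi_r(W_i y^*)
\]
then characterise $(\tau^{*(\ell,\gamma)},y^*)$. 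Dividing the second identity by $d$ and rewriting the sums as integrals against the size-biased distribution $F^{*(\ell,\gamma)}$ exhibits it as the discretised fixed-point equation $y = (1-p)\int \psi_r(xy)\,dF^{*(\ell,\gamma)}(x) + p$, perturbed by terms of size $W_\gamma'/d + |\int x\,dF^{(\ell,\gamma)}(x)/d - 1| = o_\gamma(1)$ by Definition~\ref{def:F-conv} and~\eqref{eq:convpr}.

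To pass to the limit I would invoke Lemma~\ref{lem:size_biased_approx}, which controls the $L_\infty$-distance between $F^{*(\ell,\gamma)}$ and $F^*$ outside the countable set $\mathcal{D}_\gamma$ by $\rho_1(\gamma)\to 0$. The hypothesis $f_r'(\hat{y};W_F^*,p)<0$ means that $f_r$ crosses zero transversally at $\hat{y}$, and a quantitative implicit function theorem argument then shows that any equation $y = (1-p)\int\psi_r(xy)\,dG(x) + p + O(\rho_1(\gamma))$ with $G$ close to $F^*$ off the exceptional set has a unique fixed point near $\hat{y}$ that is itself close to $\hat{y}$. I would extract the subsequence $\mathcal{S}$ so that for every $\ell\in\mathcal{S}$ the discretisation satisfies $\hat{y}\notin\mathcal{D}_{\gamma,\ell}$ and the requisite local uniform convergence of the drift functionals holds; this yields $|y^{*(\ell,\gamma)} - \hat{y}| < \delta$ for $\ell\in\mathcal{S}$ large and $\gamma$ small. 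Since every infected vertex is exposed exactly once during the process, the total number of steps equals $|\Af^{(\ell,\gamma)}| + |\cC_\gamma'(n)|$, and by Wormald this is $\tau^{*(\ell,\gamma)} n + o_p(n)$. Subtracting $|\cC_\gamma'(n)| = \gamma' n(1+o(1))$ yields $n^{-1}|\Af^{(\ell,\gamma)}| = (1 \pm \delta)\bigl((1-p)\EE[\psi_r(W_F \hat{y})] + p\bigr)$ once $\gamma<\gamma_2$ is small enough.

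The principal obstacle is running Wormald's method all the way up to the stopping boundary $\{u=0\}$ and correctly identifying the time of first hitting. While $\nu$ and $\mu_\cU$ must vanish together (their ratio being confined to $[x_0, 2C_\gamma]$), the joint vanishing could a priori be tangential rather than transversal, in which case ordinary fluctuations of the discrete process could trigger premature termination. The transversality condition $f_r'(\hat{y};W_F^*,p)<0$ rules this out by forcing $\nu$ to cross zero with definite negative slope, which lets one send $\eta\to 0$ in the definition of $D_\eta$ and pin down the discrete stopping time as $\tau^{*(\ell,\gamma)} n + o_p(n)$. Transferring this transversality from the limit to the finitary models is precisely what necessitates restricting attention to $\ell$ in the subsequence $\mathcal{S}$.
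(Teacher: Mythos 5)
Your proposal does not prove the statement at hand. Theorem~\ref{thm-eqdif1} is Wormald's general differential equation method theorem, quoted by the paper from~\cite{Worm99} without proof: it asserts, for \emph{any} family of processes $(Y^{(n)}_1,\dots,Y^{(n)}_b)$ satisfying the Boundedness, Trend and Lipschitz hypotheses on a domain $D$, that (a) the associated ODE system has a unique solution extending to the boundary of $D$, and (b) the rescaled process tracks that solution uniformly up to $\min\{\sigma n, T_D\}$ with error $o_p(n)$. A proof of this statement would have to argue in that generality: part (a) from the Lipschitz condition via Picard--Lindel\"of (uniqueness) together with an extension argument inside the open set $D$; part (b) by partitioning time into blocks of length roughly $\omega$, using the Trend hypothesis to compare the expected increment over a block with the Euler step of the ODE, using the Boundedness hypothesis together with Azuma--Hoeffding (or a supermartingale inequality) to control fluctuations within each block with failure probability small enough to union-bound over all $O(n/\omega)$ blocks and all $b$ coordinates, and finally propagating the per-block errors through the Lipschitz/Gronwall estimate to conclude a cumulative deviation of $o(n)$. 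None of this machinery appears in your write-up.

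What you have written instead is an application of Theorem~\ref{thm-eqdif1}: you verify its three hypotheses for the specific exposure process $(u, w_{\cU}, (c_{i,j}))$ of the bootstrap percolation dynamics on $CL'(\bW^{(\ell,\gamma)})$, invoke the explicit solutions of Propositions~\ref{prop:Solution} and~\ref{eq:uSolution}, and then carry out the $\gamma,\ell$ limit via Lemma~\ref{lem:size_biased_approx} and the transversality condition $f_r'(\hat{y};W_F^*,p)<0$. That is, in outline, the content of the paper's proof of Theorem~\ref{thm:FinalInfectionI} (and of Lemma~\ref{lem:stableSol}), not a proof of the quoted Wormald theorem. Since the argument presupposes the very concentration statement it is supposed to establish, the proposal has to be judged as missing the target entirely; to repair it you would need to discard the application-specific material and supply the general ODE-approximation and martingale-concentration argument sketched above (or explicitly defer to~\cite{Worm99}, in which case there is nothing to prove here).
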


\subsection{Proof of Theorem~\ref{thm:FinalInfectionI}}
We will apply Theorem~\ref{thm-eqdif1}
to show that the trajectory of $\{u(t), w_{\cU}(t), ( c_{i,j}(t) )_{1\le i \le p_\ell, 0 \le j \le r-1}\}$ throughout the algorithm is a.a.s. close to the solution
of the deterministic equations suggested by these equations, i.e., $\{\nu, \mu_{\cU}, ( \gamma_{i,j} )_{i=1,\dots,p_\ell, j=0,\dots,r-1}\}$.

% \begin{center}
% \it Rq: To have Lipschitz condition, we don't include $j=r$ in the set of variables and, by definition, we assume $\frac{0}{0}=0$!
% \end{center}

We set $b=r p_\ell + 2$. For $\epsilon >0$, we define
\begin{align*}
D_{\epsilon} = \{ (\tau, \nu, \mu_{\cU}, (\gamma_{i,j})_{i,j}) \in \RR^{b+1} \mid & -\epsilon<\tau<1, \ 0 < \frac{\mu_{\cU}}{\nu} < 2 C_\gamma, \ -\epsilon< \gamma_{i,j}< \gamma_i+\epsilon, \\ \epsilon < \mu_{\cU} < W_{\gamma}' + \sum_{i=1}^{p_\ell} W_i \gamma_i \},
\end{align*}
% and
% \begin{align*}
% D_{\epsilon} =
% &\{ (\tau, \nu, \mu_{\cU}, (\gamma_{i,j})_{i,j}) \in \RR^{b+1} \mid & -\epsilon<\tau<1, \ -\epsilon< \frac{\mu_{\cU}}
% {\nu}<2C_{\gamma}, \ -\epsilon< \gamma_{i,j}< \gamma_i+\epsilon \}.
% \end{align*}
We now apply the last part ($b$) of Theorem~\ref{thm-eqdif1}. Note that Boundedness and Trend hypotheses are verified for
$t < T_{D_{\epsilon}}$.
More specifically, the Boundedness hypothesis follows since the changes in the quantities $u(t), w_{\cU}(t), c_{i,j}(t)$ are bounded by a
constant multiple of the maximum degree of the random graph. But since the maximum weight is bounded, we may choose, for example,
$\lambda = n^{1/8}$ and $\omega = n^{25/48}$, and show that the maximum
degree is bounded by $\sqrt{\omega} / ( \lambda^2 \log n)= n^{1/96}/\log n$ with probability $1-o(n^{-3})$.
The Trend hypothesis is verified by (\ref{eq:ci0Expected})--(\ref{eq:wuExpected}).
By the conditions that $0 < \frac{\mu_{\cU}}{\nu}<2C_{\gamma}$ and $\mu_{\cU} > \epsilon$, the Lipschitz condition is also verified.
Hence, for $0 \leq t \leq \min\{\sigma_{D} n,T_{D_{\epsilon}}\}$, we have
\begin{eqnarray} \label{eq:solutions}
u(t) &=& n \nu(t/n) + o_p(n), \nonumber \\
w_{\cU}(t) &=& n \mu_{\cU}(t/n) + o_p(n),\\
c_{i,j}(t) &=& n \gamma_{i,j}(t/n) + o_p(n), \ \text{for all} \  i=1,\dots,p_\ell, \  j=0,\dots,r-1. 
\nonumber
\end{eqnarray}
\noindent
This gives us the convergence up to the point where the solution leaves $D_{\epsilon}$. 
Observe that the definition of the domain $D_{\epsilon}$ together with the fact that the maximum weight is 
bounded by $2C_\gamma$ imply that at round $T_{D_\epsilon}$ we have 
$w_{\cU}(T_{D_\epsilon})/n < \epsilon$, but $w_{\cU}(T_{D_\epsilon} -1)/n \geq \epsilon$.   

Let $\Af (T_{D_\epsilon})$ be the set of infected vertices that have been exposed up to time $T_{D_{\epsilon}}$. 
Observe that $|\Af (T_{D_\epsilon})| =T_{D_{\epsilon}}$ as exactly one vertex is removed at each step. 
Also, as we noted above 
$ w_{\cU} (T_{D_{\epsilon}})/n < \epsilon$, but $ w_{\cU} (T_{D_{\epsilon}}-1)/n \geq \epsilon$. 
Since the maximum degree is $o_p(n)$ and the weights are bounded, a.a.s. we have 
$$\epsilon \leq  w_{\cU} (T_{D_{\epsilon}}-1)/n \leq 1.5\epsilon. $$
Hence, by (\ref{eq:solutions}) a.a.s.
\begin{equation} \label{eq:mu_small} 
\mu_{\cU} \left( \frac{T_{D_{\epsilon}}-1 }{n} \right) < 2 \epsilon.
\end{equation}
Also, as the maximum weight is bounded by $2 C_\gamma$, the bound on $w_{\cU}$ implies that  
\begin{equation} \label{eq:u_stop}  
u\left(T_{D_{\epsilon}}-1\right) /n \leq \frac{1.5 \epsilon}{2C_\gamma}. 
\end{equation}
Therefore, (\ref{eq:solutions}) again implies that a.a.s.
$$ \nu \left(\frac{T_{D_{\epsilon}}-1}{n} \right) \leq \frac{\epsilon}{C_\gamma}. $$
Let 
$$\alpha (y ) := p~(1-\gamma ) +\gamma'  +   (1-p) \sum_{i=1}^{p_\ell}  \gamma_i  \psi_r \left( W_i y \right). $$
The first part of Proposition~\ref{eq:uSolution} implies that 
\begin{equation} \label{eq:StopTime}
\left| \frac{T_{D_{\epsilon}}-1}{n} -  
\alpha \left(\frac{1}{d} I\left( \frac{T_{D_{\epsilon}}-1}{n}\right)  \right)
\right| \leq \frac{\epsilon}{C_\gamma}. 
\end{equation}
Let $\hat{\tau}^{(\ell, \gamma)}$ denote the minimum $\tau >0$ such that $\mu_{\cU} (\tau) = 0$.
By Lemma~\ref{lem:stableSol} below there exists $\gamma_2>0$ with the property that for any $\gamma < \gamma_2$ and
any $\delta \in (0,1)$ there exists
an infinite set of positive integers $\mathcal{S}$ such that when $\ell \in \mathcal{S}$, it holds that
\begin{equation}\label{muDerivative}\mu_{\cU}'( \hat{\tau}^{(\ell, \gamma)}) <0, \end{equation}
and 
\begin{equation} \label{eq:fin_approx}
\left|\alpha (\hat{y}_{\ell,\gamma }) - \left(p + (1-p)\mathbb{E}(\psi_r (W_F \hat{y})) \right)  \right| < \delta,
\end{equation}
where $\hat{y}_{\ell,\gamma}$ is the smallest positive root of
$$ y = {W_\gamma' \over d} + p {1\over d} \sum_{i=1}^{p_\ell} W_i \gamma_i
+ (1-p) \sum_{i=1}^{p_\ell} {W_i \gamma_i \over d} \psi_r (W_i y) .$$
Its existence is implied by the continuity of $I(\tau)$ and $\alpha (y)$. 
By (\ref{eq:mu_small}), the continuity of the function $\mu_{\cU}$ and its monotonicity around $\hat{\tau}^{(\ell,\gamma)}$ we deduce
that there exists
$\delta_1 = \delta_1 (\epsilon)>0$ such that, for $n$ large enough, 
$$\hat{\tau}^{(\ell,\gamma)} - \delta_1 < \frac{T_{D_{\epsilon}}-1}{n} \leq \hat{\tau}^{(\ell,\gamma)}. $$
The continuity of $I$ and $\alpha$ implies that there exists an increasing function $f:\mathbb{R}^+ \rightarrow \mathbb{R}^+$, 
such that $f(x) \downarrow 0$ as $x \downarrow 0$ and 
\begin{equation} \label{eq:alpha_approx}
\left| \alpha \left( \frac{1}{d} I\left( \hat{\tau}^{(\ell,\gamma)} \right) \right)  - 
\alpha \left( \frac{1}{d}I\left( \frac{T_{D_{\epsilon}}-1}{n} \right) \right) \right| < f (\delta_1) . 
\end{equation}

% But if we choose such $\gamma$ and $\ell$, then  $\mu_{\cU}(\tau)$ begins to go
% negative after $\hat{\tau}^{(\ell, \gamma)}$, and therefore $w_{\cU}(t)>0$ must be violated a.a.s., and it becomes zero at some
% $T_f = n \hat{\tau}^{(\ell, \gamma)}$. Note that $T_f$ is equal to the total number of infected vertices.
% \noindent
% Therefore, %the size of the final set $\hat{\Af}(T_{D_\epsilon})$ satisfies a.a.s.
% \begin{equation*}
% \begin{split}
% n^{-1}{|\hat{\Af}(T_{D_\epsilon})|}&= 
% \hat{\tau}^{(\ell,\gamma)} & = (1\pm \delta ) \left( p~(1-\gamma ) +\gamma'
% + (1-p) \sum_{i=1}^{p_\ell}  \gamma_i  \PRO {\Po \left( {W_i \over d} I(\hat{\tau}^{(\ell,\gamma)} ) \right) \geq r} \right) \\
% & = (1\pm \delta )  \left( p~(1-\gamma ) +\gamma'  +   (1-p)
% \sum_{i=1}^{p_\ell}  \gamma_i  \psi_r \left( {W_i \over d} I(\hat{\tau}^{(\ell,\gamma)} ) \right)  \right).
% \end{split}
% \end{equation*}
Let us set $x=x (\tau) = I(\tau) /d$. Since $\mu_{\cU} (\hat{\tau}^{(\ell,\gamma )}) = 0$, this implies that
\begin{equation*}
\begin{split}
{ I(\hat{\tau}^{(\ell,\gamma )}) \over d} &= {W_{\gamma}' \over d} +p~{1\over d}\sum_{i=1}^{p_\ell} W_i \gamma_i
+ (1-p) \sum_{i=1}^{p_\ell}  {W_i \gamma_i \over d} \PRO {\Po \left( {W_i \over d} I( \hat{\tau}^{(\ell,\gamma )}) \right) \geq r} \\
& = {W_{\gamma}' \over d} +p~{1\over d}\sum_{i=1}^{p_\ell} W_i \gamma_i
+ (1-p)  \sum_{i=1}^{p_\ell}  {W_i \gamma_i \over d} \psi_r  \left( {W_i \over d} I( \hat{\tau}^{(\ell,\gamma )}) \right),
\end{split}
\end{equation*}
whereby $\hat{y}_{\ell, \gamma} = x (\hat{\tau}^{(\ell,\gamma )})$. 
Thus the triangle inequality together with (\ref{eq:StopTime}), (\ref{eq:fin_approx}) and (\ref{eq:alpha_approx}) imply that for any $\gamma < \gamma_2$, any
$\delta \in (0,1)$ and any $\ell \in \mathcal{S}$ a.a.s.
\begin{equation*}
\begin{split}
\left| n^{-1}{|\Af (T_{D_\epsilon})|} - \alpha (\hat{y}) \right| < \frac{\epsilon}{C_\gamma} + 
\delta + f(\delta_1 ) + \frac{1}{n}.
\end{split}
\end{equation*}
Recall that $f(\delta_1)$ can become arbitrarily small if we make $\epsilon$ small enough. 
Therefore, the right-hand side of the above can become as small as we please. 
Since $u(T_{D_\epsilon}) \leq \frac{1.5\epsilon}{2C_\gamma}n$, 
the proof of Theorem~\ref{thm:FinalInfectionI} will be complete, if we show that the process will finish soon after 
$T_{D_\epsilon}$. 

More specifically, we will show that with high probability only a small fraction of vertices are added after $T_{D_\epsilon}$. 
From now on, we start exposing the edges incident to all vertices of $\cU(t)$ \emph{simultaneously}. 
Hence, we change the time scaling. Informally, 
each round is a generation of a multi-type branching process which is sub-critical. 
First, let us observe that the continuity of $\mu_{\cU}'$ together with (\ref{muDerivative}) and (\ref{eq:wuDifferentialEquation}) imply 
that there exists $\kappa_0<1$ such that for all $n$ sufficiently large we have
\begin{equation}\label{eq:eigenvalue} \sum_{i=1}^{p_\ell} \frac{W_i^2}{W_{[n]}} 
c_{i, r-1}\left( \frac{T_{D_{\epsilon}}-1}{n} \right) < \kappa_0< 1.
\end{equation}
We will stochastically bound from above the evolution of the process by a subcritical multi-type branching process in which 
the above expression dominates the principal eigenvalue of the expected progeny matrix. 
In fact, we will not keep track of the actual size of $\cU$ but of a functional which is well-known in the theory of 
multi-type branching processes to give rise to a martingale. However, in our context we will not have exactly the martingale property 
but only approximately. Let us proceed with the details of this argument. 

Let $\cU_i (t)$ denote the subset of $\cU (t)$ which consists of those vertices that have weight $W_i$ and let $u_i(t):= |\cU_i(t)|$ -- we 
say that these vertices are of type $i$. 
Let $\overline{u}_t= [u_1(t),\ldots, u_{p_\ell}(t)]^T$ be the vector whose co-ordinates are the sizes of the sets $\cU_i (t)$. 
A vertex $v \in \cU_j(t)$ can ``give birth" to vertices of type $i$ (i.e., of weight $W_i$). These may be vertices from the 
set $\mathcal{C}_{i,r-1}$ or from any one of the sets $\mathcal{C}_{i,r-k}$, for $k\geq 2$. 
If $v$ becomes adjacent to a vertex in $\mathcal{C}_{i,r-1}$, then this becomes infected and we say that it is a child of $v$. 
Similarly, we say that a vertex in $\mathcal{C}_{i,r-k}$, for $k\geq 2$, becomes a child of $v$, if it is adjacent to $v$ and 
to \emph{some other} vertex in $\cU(t)$. In that sense, a vertex may be a child of more than one vertices in $\cU(t)$. 
In this case, we assume that the vertex is \emph{born twice} and it is double-counted in $\cU(t+1)$. 
In fact, the former case is much more likely than the latter. 
The expected number of those children that are born out of $\mathcal{C}_{i,r-1}$
is bounded by $\frac{W_j W_i}{W_{[n]}} c_{i,r-1} (t)< \frac{W_jW_i}{d}(\gamma_{i,r-1}(t) + \delta)$, for 
any $\delta >0$, assuming that $c_{i,r-1} (t)$ is concentrated around $\gamma_{i,r-1} (t) n$. Indeed, this is the case 
for $t=(T_{D_{\epsilon}}-1)/n$, which will take as the starting time of our analysis. 
The expected number of the vertices of type $i$ that originate from $\mathcal{C}_{i,r-k}$, for $k\geq 2$, is 
bounded by $c_{i,r-k}\frac{W_j W_i}{W_{[n]}}  \left( |\cU(t)|   (2 C_\gamma)^2/W_{[n]} \right)$. 
This is the case as the factor $|\cU(t)|   (2 C_\gamma)^2/W_{[n]}$ bounds from above the probability that a given 
vertex in $\mathcal{C}_{i,r-k}$ is adjacent to some other vertex in $\cU(t)$. 

We set $t_0:=(T_{D_{\epsilon}}-1)/n$.   

Now, if we let $A_t$ be the $p_\ell \times p_\ell$ matrix whose $ij$ entry is the expected number of children of type $j$ that 
a vertex of type $i$ has, then 
$\Ex{ \overline{u}_{t+1} | \mathcal{H}_t}  = \overline{u}_t^{T} A_t$, where $\mathcal{H}_t$ is the sub-$\sigma$-algebra 
which is generated by the history of the process up to round $t$. 
One can view the matrix $A_t$ as the expected progeny matrix of a multi-type branching process, where the expected number of 
children of type $j$ that a vertex of type $i$ gives birth to is at most 
$$A_t[i,j]:=\frac{W_iW_j}{W_{[n]}}a_j(t), \ \mbox{where} \ a_j(t):=c_{j,r-1}(t) + 
u(t) \frac{4 C_\gamma^2}{W_{[n]}}\sum_{k=2}^{r} c_{j,r-k} (t). $$ 
Throughout this section, we will be working with this upper bound, which comes from a stochastic upper bound on the process. 
It is not hard to see that the vector $[W_1,\ldots, W_{p_\ell}]^T$ is a right eigenvector of $A_t$, with 
$$\sum_{i=1}^{p_\ell} \frac{W_i^2}{W_{[n]}} a_i(t)=:\rho_t$$ 
being the corresponding eigenvalue. In fact, this is the unique 
positive eigenvalue of $A_t$. Assuming that $c_{j,r-1}(t)$ does not decrease (which we can, taking a stochastic upper bound), 
we have $\rho_t \geq \rho_{t_0}$, for $t >t_0$.  

For $t=t_0$, it is not hard to see that $\rho_{t_0}$ is less than and bounded away from 1, if we choose $\epsilon$ small enough. 
Indeed, by (\ref{eq:u_stop}) 
$$ u(t_0) \frac{4 C_\gamma^2}{W_{[n]}}\sum_{k=2}^{r} c_{j,r-k} (t_0)  \leq 
u(t_0) \frac{4 C_\gamma^2}{W_{[n]}} n < u (t_0) \frac{5 C_\gamma^2}{d n} n \stackrel{(\ref{eq:u_stop})}{\leq }  
\epsilon {15 C_\gamma \over 2d}  n.$$
Hence, together with (\ref{eq:eigenvalue}) we deduce that if $\epsilon$ is small enough, then $\rho_{t_0}$ is smaller than 1 and, in fact, 
it is bounded away from 1. 

Let $\lambda_i: = W_i /\sum_j W_j$ and set $\xi:=[\lambda_1,\ldots, \lambda_{p_\ell}]^T$. Clearly, this is also a right eigenvector
of $A_t$. Consider now the random variable $Z_t = (\xi, \overline{u}_t)$, where $(\cdot ,\cdot)$ is the usual \emph{dot} product. 
Therefore, 
$$ \Ex {Z_{t+1} | \mathcal{H}_t} \leq \rho_t Z_t. $$
\begin{claim}\label{clm:conc_proc}
With (conditional) probability $1-o(n^{-1})$ we have 
$$ Z_{t+1} \leq \rho_t Z_t + Z_t^{1/2}\log^2 n.$$ 
\end{claim}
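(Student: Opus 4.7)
The plan is to realize $Z_{t+1}$, conditional on $\mathcal{H}_t$, as a sum of independent nonnegative random variables and then apply Bernstein's inequality. In the simultaneous-exposure phase, for each uninfected vertex $u$ let $Y_u$ be the number of times $u$ is born in round $t{+}1$ (so $Y_u\in\{0,1\}$ when $u\in\mathcal{C}_{i,r-1}$, and otherwise $Y_u$ equals the number of edges from $u$ to $\cU(t)$ if this exceeds the threshold $k$ determined by the layer of $u$). Since $Y_u$ depends only on the edges joining $u$ to $\cU(t)$, and these edge sets are disjoint for distinct $u$'s, the $Y_u$'s are mutually independent conditional on $\mathcal{H}_t$. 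This gives the representation $Z_{t+1}=\sum_u \lambda_{\mathrm{type}(u)}\,Y_u$.

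The next step is to bound the conditional variance $V=\sum_u \lambda_{\mathrm{type}(u)}^2 \Var(Y_u)$. For $u\in\mathcal{C}_{i,r-1}$, $Y_u$ is a Bernoulli, so $\Var(Y_u)\le \EE[Y_u]$. For $u\in\mathcal{C}_{i,r-k}$ with $k\ge 2$, the event $\{Y_u>0\}$ requires at least $k\ge 2$ edges from $u$ into $\cU(t)$, each present independently with probability $O(W_u/W_{[n]})$, so one checks that $\EE[Y_u^2]\le C_r\,\EE[Y_u]$ for a constant $C_r$ depending only on $r$ and $C_\gamma$. Using $\lambda_{\mathrm{type}(u)}\le M_0:=2C_\gamma/W_{[n]}\cdot\tsum_j W_j$ bounded by a constant, we obtain
\[
V\;\le\;C_r M_0\sum_u \lambda_{\mathrm{type}(u)}\EE[Y_u]\;=\;C_r M_0\cdot\EE[Z_{t+1}\mid\mathcal{H}_t]\;\le\;C_r M_0\cdot\rho_t Z_t\;=\;O(Z_t).
\]

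The remaining point is a pointwise bound on the summands, which is needed because $Y_u$ is in principle as large as $|\cU(t)|$. Here the plan is a preliminary truncation: set $T:=r\log n$, and observe that for any fixed $u$ the number of edges from $u$ to $\cU(t)$ is stochastically dominated by $\mathrm{Bin}(n,2C_\gamma^2/W_{[n]})=\mathrm{Bin}(n,O(1/n))$, so by a Chernoff bound $\Pro{Y_u>T\mid\mathcal{H}_t}=n^{-\omega(1)}$ and a union bound gives $\Pro{\max_u Y_u>T\mid\mathcal{H}_t}=o(n^{-2})$. On this event, $\lambda_{\mathrm{type}(u)} Y_u\le MT$ with $M=M_0$, and Bernstein's inequality applied to $Z_{t+1}$ with $s=Z_t^{1/2}\log^2 n$ yields
\[
\Pro{Z_{t+1}-\rho_t Z_t>s\mid\mathcal{H}_t}\;\le\;\exp\!\bigl(-\Omega(\min\{s^2/V,\;s/(MT)\})\bigr)\;\le\;\exp\!\bigl(-\Omega(\min\{\log^4 n,\;Z_t^{1/2}\log n\})\bigr),
\]
which is $o(n^{-1})$ whenever $Z_t\ge 1$. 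The degenerate case $Z_t<1$ can be handled separately: then $u(t)$ is a bounded number of vertices, and the total number of births in round $t{+}1$ is stochastically dominated by $\mathrm{Bin}(O(n),O(1/n))$, so a direct Chernoff bound gives $Z_{t+1}\le\log^2 n$ with probability $1-o(n^{-1})$, and the claim holds since $Z_t^{1/2}\log^2 n\ge\log^2 n$.

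The main subtlety I anticipate is the variance/tail estimate for the multi-birth contributions $Y_u$ with $k\ge 2$: an unweighted Bernstein or McDiarmid bound with uniform Lipschitz constant $r$ over all edges would give variance $\Omega(u(t)\cdot n)$, which is far too large. Exploiting independence at the \emph{vertex} level $u$ (rather than the edge level) is what reduces the variance to $O(Z_t)$, and the truncation $T=O(\log n)$ is what keeps the Bernstein bound tight enough to survive $o(n^{-1})$. Everything else is routine bookkeeping with constants depending on $r$ and $C_\gamma$.
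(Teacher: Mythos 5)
Your argument is essentially correct but takes a genuinely different route from the paper. The paper writes $Z_{t+1}=\sum_{j}\lambda_j\sum_{k=1}^r\sum_{v\in\mathcal{C}_{j,r-k}(t)}\mathbf{1}_{d_{\cU(t)}(v)\ge k}$, views it as a $1$-Lipschitz function of the independent edge indicators incident to $\cU(t)$ that is certifiable with $\psi(x)=rx$, applies Talagrand's inequality around the median, and then converts median to mean. You instead exploit independence at the \emph{vertex} level: conditionally on $\mathcal{H}_t$ the edges joining distinct uninfected vertices to $\cU(t)$ are disjoint, unexposed families, so the per-vertex contributions $Y_u$ are independent; the second-moment estimate $\EE[Y_u^2]\le C\,\EE[Y_u]$ (valid because the degree of $u$ into $\cU(t)$ is dominated by a binomial with mean $O(1)$) gives conditional variance $O(\rho_t Z_t)$, and the truncation at level $r\log n$ supplies the sup bound needed for Bernstein. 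This buys you a self-contained, elementary inequality with no median-versus-mean step; the paper's Talagrand route avoids the truncation and the per-layer second-moment computation and yields the cleaner uniform bound $e^{-\Omega(Z_t^{1/2}\log^2 n)}$. (Your stated bound $\lambda_{\mathrm{type}(u)}\le 2C_\gamma\tsum_jW_j/W_{[n]}$ is not the right expression, but only $\lambda_i\le1$ is needed, so this is harmless.)

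Two small repairs are needed. First, for $Z_t=O(1)$ your exponent $\Omega(\min\{\log^4n,\;Z_t^{1/2}\log n\})$ is only $\Omega(\log n)$ with an unspecified constant, so ``$o(n^{-1})$ whenever $Z_t\ge1$'' is not justified as written; either lower the truncation to $\Theta(\log n/\log\log n)$ (the union bound is still $n^{-\omega(1)}$ and the second term becomes $\omega(\log n)$), or note that in the paper the claim is invoked only while $Z_t>n^{1/2}$, where your bound is ample. Second, in your degenerate case the inequality $Z_t^{1/2}\log^2n\ge\log^2n$ fails precisely when $Z_t<1$; but if $u(t)\ge1$ then $Z_t\ge\lambda_{\min}>0$, a constant, and the number of births is $O(\log n)$ with probability $1-o(n^{-1})$, so $Z_{t+1}\le Z_t^{1/2}\log^2n$ still holds (and if $u(t)=0$ the claim is trivial). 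Neither point affects the argument in the regime where the claim is used.
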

\begin{proof}[Proof of Claim~\ref{clm:conc_proc}] 
Note that $Z_{t+1}$ is a weighted sum of Bernoulli random variables, where the weights are bounded. 
More specifically, $Z_{t+1} = \sum_{j=1}^{p_\ell} \lambda_{j}\sum_{k=1}^r \sum_{v \in \mathcal{C}_{j,r-k}(t)} 
\mathbf{1}_{d_{\cU(t)}(v) \geq k}$. We will appeal to Talagrand's inequality (see for example Theorem 2.29 in~\cite{JLR}). 
Firstly, note that $Z_{t+1}$ is a function of independent Bernoulli random variables, which correspond to the (potential) edges that 
are incident to $\cU(t)$. If we change any one of them, then $Z_{t+1}$ will change accordingly by at most 1 (as all the 
$\lambda_j$s are at most 1). 
Furthermore, if $Z_{t+1}\geq x$, for some $x\geq 0$, then there are at most $rx$ edges whose presence witnesses this fact. 
Hence, we can apply Theorem 2.29 from~\cite{JLR} taking $\psi (x) =rx$, with $m(Z_{t+1})$ being the median of $Z_{t+1}$; 
Talagrand's inequality yields
\begin{equation}\label{eq:Tala} \Pro {Z_{t+1} \geq m(Z_{t+1}) + \frac{1}{2}Z_t^{1/2}\log^2 n}  \leq 2 e^{-\frac{Z_t \log^4 n}{4r\left(m(Z_{t+1}) +Z_t^{1/2}\log^2 n\right)}}.
\end{equation}
Since $\psi (x)$ is proportional to $x$ and $Z_{t+1}$ takes only non-negative integer values, (using an argument similar to that 
on pages 41--42 in~\cite{JLR}) it follows that 
$$ |\Ex{Z_{t+1}} - m(Z_{t+1}) | = O(\Ex {Z_{t+1}}^{1/2}). $$
Hence, for $n$ large enough
\begin{equation*}  
\begin{split}
&\Pro {Z_{t+1} \geq \Ex {Z_{t+1}} + Z_t^{1/2}\log^2 n} \leq \\
&\Pro {Z_{t+1} \geq m(Z_{t+1}) - O(\Ex {Z_{t+1}}^{1/2}) + Z_t^{1/2}\log^2 n}  \leq \Pro {Z_{t+1} \geq m(Z_{t+1}) + \frac{1}{2}Z_t^{1/2}\log^2 n}. 
\end{split}
\end{equation*}
So by (\ref{eq:Tala}) we conclude (using that $m(Z_{t+1}) \leq 2 \Ex {Z_{t+1}}\leq 2 \rho_t Z_{t}$) that 
\begin{equation*}
\begin{split}
&\Pro {Z_{t+1} \geq \Ex {Z_{t+1}} + Z_t^{1/2}\log^2 n} \leq \\
&\hspace{3cm} 2 e^{-\frac{Z_t \log^2 n}{4r\left(m(Z_{t+1}) +Z_t^{1/2}\log^2 n
 \right)}} 
\leq 2 e^{-\frac{Z_t \log^4 n}{4r \left( 2\rho_t Z_t +Z_t^{1/2}\log^2 n \right)}} = e^{- \Omega (\log^2 n)}. 
\end{split}
\end{equation*}
\end{proof}
We denote the above event by $\mathcal{E}_t$. 
Let $T = \min\{ t\geq t_0 \ : \  Z_t \leq n^{1/2}\}$ and let $t_0 \leq t <T$.
On $\mathcal{E}_t$ we have
\begin{equation} \label{eq:Z_tineq}
Z_{t+1} \leq \rho_t Z_t \left( 1 + \frac{Z_t^{1/2} \log^2 n}{\rho_t Z_t} \right) \stackrel{\rho_t \geq \rho_{t_0}}{\leq} \rho_t Z_t 
\left( 1 + \frac{\log^2 n}{\rho_{t_0} n^{1/4}} \right). 
\end{equation}
Thus, on $\cap_{t_0 \leq s \leq t}\mathcal{E}_s$ for $t_0 \leq t < T$, we have 
\begin{equation}\label{eq:recursion_sol} 
Z_t \leq \left( \prod_{s=0}^{t-t_0-1} \rho_{t_0+s} \right)  \left(1+ \frac{\log^2 n}{\rho_{t_0} n^{1/4}} \right)^{t-t_0} Z_{t_0}. 
\end{equation}

In a multi-type branching process, the variable $Z_t/\rho^t$, where $\rho$ is the largest positive eigenvalue of the progeny matrix, 
is a martingale (see for example Theorem 4 in Chapter V.6 of~\cite{bk:AthNey}). Here, we use this fact only approximately, since the
progeny matrix changes as the process evolves. Nevertheless, after time $t_0$ the matrix does not change immensely. Whereby, 
we are able to control the increase of the largest eigenvalue. 
Let us now make this precise. 

By (\ref{eq:eigenvalue}), the largest positive eigenvalue of $A_{t_0}$ is bounded by a constant $\rho_0 <1$, with probability 
$1-o(1)$. Set $\lambda_{min}:=\min_i \{ \lambda_i\}$. 
For any $t \geq t_0$, let $$\mathcal{D}_t :=
\left\{ \sum_{j=1}^{p_\ell} \sum_{k=2}^r \sum_{v \in \cU(t)} d_{\mathcal{C}_{j,r-k} (t)}(v) < 
\max \left\{ \frac{10 C_\gamma^2}{\lambda_{min} d} Z_t ,n^{1/2}\right\} \right\}. $$
\begin{claim} \label{clm:deg_Conc}
For any $t \geq t_0$ we have $\Pro{\mathcal{D}_t} = 1-o(n^{-1})$.  
\end{claim}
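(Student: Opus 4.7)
The plan is to show the stated bound on the sum $Y := \sum_{j=1}^{p_\ell}\sum_{k=2}^{r}\sum_{v\in\cU(t)}d_{\mathcal{C}_{j,r-k}(t)}(v)$ (the quantity appearing in the definition of $\mathcal{D}_t$) by combining a conditional-independence observation with a standard Chernoff tail estimate.

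First I would argue that, conditional on the history $\mathcal{H}_t$, the variable $Y$ is a sum of mutually independent Bernoulli indicators. The edges counted by $Y$ are the pairs $\{v,u\}$ with $v\in\cU(t)$ and $u\in\mathcal{C}_{j,r-k}(t)$ for some $j$ and some $k\geq 2$; each such pair is distinct (since $u$ lies in exactly one $\mathcal{C}_{j,r-k}(t)$) and none has been tested in any previous step. Indeed, the sequential phase (up to $T_{D_\epsilon}$) exposes only the edges incident to the vertex \emph{leaving} $\cU$ at the current step, and in the subsequent simultaneous phase rounds $0,1,\dots,t-1$ expose only edges out of previous generations of $\cU$; hence the potential edges between $\cU(t)$ and the currently uninfected vertices in $\bigcup_j\bigcup_{k\geq 2}\mathcal{C}_{j,r-k}(t)$ remain undetermined. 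Since the edges of $CL'(\bW^{(\ell,\gamma)})$ are mutually independent, the claim follows.

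Next, I would estimate $\mu:=\mathbb{E}[Y\mid\mathcal{H}_t]$. Factoring the $CL'$ edge probability over the summations,
\[
\mu \;\leq\; \frac{1}{W_{[n]}(\bw)}\Bigl(\sum_{v\in\cU(t)}W_v\Bigr)\Bigl(\sum_{j=1}^{p_\ell}W_j\sum_{k=2}^{r}c_{j,r-k}(t)\Bigr) \;=\; \frac{w_\cU(t)}{W_{[n]}(\bw)}\sum_{j=1}^{p_\ell}W_j\sum_{k=2}^{r}c_{j,r-k}(t).
\]
Using $\sum_{k=2}^r c_{j,r-k}(t)\leq|\mathcal{C}_j|$, $W_j\leq C_\gamma$, $\sum_j|\mathcal{C}_j|\leq n$, and $W_{[n]}(\bw)=(1+o(1))dn$, together with the identity $w_\cU(t)=Z_t\sum_jW_j$ combined with $\sum_jW_j=W_1/\lambda_{min}\leq C_\gamma/\lambda_{min}$, one obtains $\mu\leq(1+o(1))\,C_\gamma^{2}Z_t/(d\lambda_{min})$.

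Finally, I would invoke the Chernoff bound $\mathbb{P}[Y\geq y\mid\mathcal{H}_t]\leq(e\mu/y)^{y}$, valid for $y\geq\mu$ and sums of independent Bernoulli variables. Setting $B:=\max\{10 C_\gamma^{2}Z_t/(d\lambda_{min}),\,n^{1/2}\}$, one has $B\geq 9\mu$ and $B\geq n^{1/2}$ for all large $n$, whence
\[
\mathbb{P}[Y\geq B\mid\mathcal{H}_t]\leq(e/9)^{B}\leq(e/9)^{n^{1/2}}=o(n^{-1}),
\]
as desired. The main obstacle will be the first step: it is essential to verify, by carefully inspecting the exposure protocol used in both phases, that none of the edges contributing to $Y$ has been tested before step $t$. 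Once the conditional-independence structure is confirmed, the expectation estimate and the Chernoff tail bound are routine.
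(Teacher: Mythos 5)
Your proposal is correct and follows essentially the same route as the paper: you bound the conditional expectation of the sum by a constant times $C_\gamma^2 Z_t/(\lambda_{min} d)$ (the paper does this by dominating each vertex's contribution by an i.i.d.\ $\mathrm{Bin}(n,(2C_\gamma)^2/W_{[n]})$ variable and using $u(t)\le Z_t/\lambda_{min}$, while you compute the expectation directly from the edge probabilities and the identity $w_{\cU}(t)=Z_t\sum_j W_j$) and then conclude with a Chernoff bound, using that the threshold $B_t$ is both at least a constant multiple of the mean and at least $n^{1/2}$. Your explicit verification that the relevant edges are still unexposed is a welcome spelling-out of what the paper's stochastic domination leaves implicit.
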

\begin{proof}[Proof of Claim~\ref{clm:deg_Conc}]
The random variable $\sum_{j=1}^{p_\ell} \sum_{k=2}^r \sum_{v \in \cU(t)} d_{\mathcal{C}_{j,r-k} (t)}(v)$ is stochastically bounded 
from above by $\sum_{v \in \cU(t)} X_v$, where the $X_v$s  are i.i.d. random variables that are distributed 
as $\mathrm{Bin} (n,(2C_\gamma)^2 / W_{[n]})$.  The expected value of this sum bounded by ${5 C_\gamma^2 \over d} u(t)$ 
for large $n$. Also, $u(t) \leq Z_t/\lambda_{min}$, as 
$Z_t = (\xi, \overline{u}_{t}) = \sum_{i} \lambda_i u_i(t) \geq \lambda_{min} \sum_i u_i(t)$. 
So the expectation is at most ${5 C_\gamma^2 \over \lambda_{min} d} Z_t$. The claim follows from a standard Chernoff bound 
on the binomial distribution (as the sum of identically distributed binomials is itself binomially distributed). 
\end{proof}
Let $B_t:= \max \left\{ \frac{10 C_\gamma^2}{\lambda_{min} d} Z_t ,n^{1/2}\right\}$.

On the event $\mathcal{D}_t$, the total degree of the vertices in $\cU (t)$ into the set $\mathcal{C}_{j,r-2} (t)$ bounds the 
number of vertices that enter into the set $\mathcal{C}_{j,r-1} (t)$. Hence, on the event $\mathcal{D}_t$, we have 
$$c_{i, r-1}( t +1) \leq c_{i,r-1}(t) + B_t.$$ 
Furthermore, for large $n$
$$u(t+1) \frac{4 C_\gamma^2}{W_{[n]}}\sum_{k=2}^{r} c_{j,r-k} (t+1) \leq u(t+1) \frac{4 C_\gamma^2}{W_{[n]}} n  \leq 
u(t+1) \frac{5 C_\gamma^2}{dn}n = u(t+1) \frac{5 C_\gamma^2}{d}\leq  Z_{t+1} \frac{5 C_\gamma^2}{\lambda_{min} d}.$$
Also, on $\mathcal{E}_t$ we have $Z_{t+1} \leq \beta_1 Z_t$, for some constant $\beta_1 >0$. 
Therefore, on $\mathcal{D}_t \cap \mathcal{E}_t$ we have 
$$\sum_{i=1}^{p_\ell} \frac{W_i^2}{W_{[n]}} a_i ( t +1)
 \leq \sum_{i=1}^{p_\ell} \frac{W_i^2}{W_{[n]}} \left( c_{i, r-1}( t ) +  B_t + Z_{t+1} \frac{5 C_\gamma^2}{\lambda_{min} d}\right) \leq 
\sum_{i=1}^{p_\ell} \frac{W_i^2}{W_{[n]}} c_{i, r-1}( t ) + \beta \frac{B_t}{n}, $$
for some constant $\beta$ and any $n$. 
In other words, 
\begin{equation} \label{eq:eigenineq}
\rho_{t+1} \leq \rho_t + \beta \frac{B_t}{n}. 
\end{equation}
We now prove the following claim. 
\begin{claim} \label{clm:rho_solution} There exists a constant $\beta' >0$ such that the following holds.
Assuming that $\cap_{t_0\leq s \leq t} \{ \mathcal{D}_s \cap \mathcal{E}_s \}$ is realised for a certain $t_0 \leq t < T$, we have 
$$ \rho_t \leq \rho_{t_0} + \beta' \frac{Z_{t_0}}{n} \sum_{s=0}^{t-t_0-2} \left( \prod_{i=0}^s \rho_{t_0+i} \right) 
\left( 1 + \frac{\log^2 n}{\rho_{t_0}n^{1/4}} \right)^{s+1}.$$
\end{claim}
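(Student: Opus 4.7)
The plan is to telescope the one-step inequality \eqref{eq:eigenineq} from $t_0$ up to $t$, then substitute the explicit bound \eqref{eq:recursion_sol} for $Z_s$ and reindex.

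First I would iterate \eqref{eq:eigenineq}: under the event $\cap_{t_0\leq s\leq t}\{\mathcal{D}_s\cap\mathcal{E}_s\}$, the inequality $\rho_{s+1}\leq\rho_s+\beta B_s/n$ holds for each $t_0\leq s\leq t-1$, so summing a telescoping series gives
$$
\rho_t \;\leq\; \rho_{t_0} \;+\; \frac{\beta}{n}\sum_{s=t_0}^{t-1} B_s.
$$

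Next I would replace $B_s$ by a constant multiple of $Z_s$. Recall that $B_s=\max\{(10 C_\gamma^2/(\lambda_{min} d))\,Z_s,\,n^{1/2}\}$ and that for $s<T$ the definition of $T$ yields $Z_s\geq n^{1/2}$. Consequently $B_s\leq c_2 Z_s$ for the constant $c_2:=10 C_\gamma^2/(\lambda_{min} d)+1$. Plugging in \eqref{eq:recursion_sol}, which under the same good event tells us $Z_s\leq Z_{t_0}\prod_{i=0}^{s-t_0-1}\rho_{t_0+i}\bigl(1+\tfrac{\log^2 n}{\rho_{t_0}n^{1/4}}\bigr)^{s-t_0}$ (with the empty product equal to $1$), I obtain
$$
\rho_t \;\leq\; \rho_{t_0}\;+\;\frac{\beta c_2\,Z_{t_0}}{n}\sum_{s=t_0}^{t-1}\Bigl(\prod_{i=0}^{s-t_0-1}\rho_{t_0+i}\Bigr)\Bigl(1+\frac{\log^2 n}{\rho_{t_0}n^{1/4}}\Bigr)^{s-t_0}.
$$

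Finally I would reindex, setting $s'=s-t_0$ so that the sum runs over $s'=0,\ldots,t-t_0-1$, then separate the $s'=0$ term (which equals $1$) from the rest and substitute $s=s'-1$ in the remaining terms to convert the inner product $\prod_{i=0}^{s'-1}$ into $\prod_{i=0}^{s}$ and the exponent $s'$ into $s+1$. This lands the sum in the exact form stated in the claim, with the leftover constant $1$ absorbed (together with a harmless $\beta c_2$) into a sufficiently large $\beta'$, using the trivial lower bound $1$ on (e.g.) the $s=0$ term of the displayed sum.

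The argument is essentially bookkeeping; the only mild point of care is to justify the replacement $B_s\leq c_2 Z_s$, which relies on being strictly before the stopping time $T$. Once this is in place, the chain $\rho_{s+1}\leq\rho_s+\beta B_s/n \Rightarrow \rho_t\leq\rho_{t_0}+(\beta/n)\sum_s B_s\leq \rho_{t_0}+(\beta c_2/n)\sum_s Z_s$, combined with the multiplicative bound on $Z_s$ from \eqref{eq:recursion_sol}, gives the claim after reindexing and a constant adjustment.
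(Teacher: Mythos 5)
Your proof is, in substance, the paper's proof: the paper organises the same computation as an induction on $t$, but the inductive step is exactly the one-step inequality \eqref{eq:eigenineq}, plus the bound $B_{t-1}\le \beta'' Z_{t-1}$ (valid before the stopping time $T$, which is precisely your $B_s\le c_2 Z_s$), plus the bound \eqref{eq:recursion_sol} on $Z_{t-1}$; unwinding that induction is your telescoping sum, so after reindexing the two arguments coincide.

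The one step of your write-up that does not work as stated is the final absorption of the leftover constant $1$ into $\beta'$. The $s=0$ term of the displayed sum equals $\rho_{t_0}\bigl(1+\frac{\log^2 n}{\rho_{t_0}n^{1/4}}\bigr)=\rho_{t_0}+\frac{\log^2 n}{n^{1/4}}$, which is strictly below $1$ for large $n$ (the paper shows $\rho_{t_0}$ is bounded away from $1$), so there is no ``trivial lower bound $1$'' on it; moreover for $t=t_0+1$ the displayed sum is empty, so there is nothing to absorb into at all. What your telescoping honestly yields is the bound with the sum replaced by $1+\sum_{s=0}^{t-t_0-2}(\cdots)$, i.e.\ an extra additive term $\beta' Z_{t_0}/n$. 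This is not really a defect of your approach: the paper's own induction produces exactly the same extra term at the base step (its new summand for $t=t_0+1$ is the empty product, equal to $1$, which does not fit inside the stated sum), so the claim as printed carries the same off-by-one. The clean fix is simply to keep the extra additive term rather than absorb it; in the only place the claim is used, Claim~\ref{clm:eigenboundfinal}, the additional $\beta' Z_{t_0}/n\le\beta'\eps$ changes nothing.
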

\begin{proof}[Proof of Claim~\ref{clm:rho_solution}]
We show this by induction on $t$. (We shall assume that the empty sum is equal to 0 and the empty product is equal to 1.)
 For $t=t_0$, the statement is obviously true. Suppose that it holds for any $t_0 \leq s<t$. 
 Let $\beta''$ be such that $B_{t-1}/Z_{t-1} < \beta''$, for all $t_0 < t \leq T$ (by the definition of $B_t$ and the stopping time $T$ such
 a constant does exist). Set $\beta' :=\beta \cdot \beta''$. 
Using (\ref{eq:eigenineq}) we have 
\begin{equation*}
\begin{split}
\rho_t &\leq \rho_{t-1} + \beta \frac{B_{t-1}}{n} \leq \rho_{t_0}+ \beta' \frac{Z_{t_0}}{n} 
\sum_{s=0}^{t-t_0-3} \left( \prod_{i=0}^s \rho_{t_0+i} \right) 
\left( 1 + \frac{\log^2 n}{\rho_{t_0} n^{1/4}} \right)^{s+1} + \beta \frac{B_{t-1}}{n} \\
& \leq \rho_{t_0}+ \beta' \frac{Z_{t_0}}{n} \sum_{s=0}^{t-t_0-3} \left( \prod_{i=0}^s \rho_{t_0+i} \right) 
\left( 1 + \frac{\log^2 n}{\rho_{t_0} n^{1/4}} \right)^{s+1} + \beta' \frac{Z_{t-1}}{n} \\
&\stackrel{(\ref{eq:recursion_sol})}{\leq}
\rho_{t_0}+ \beta' \frac{Z_{t_0}}{n} \sum_{s=0}^{t-t_0-3} \left( \prod_{i=0}^s \rho_{t_0+i} \right) 
\left( 1 + \frac{\log^2 n}{\rho_{t_0} n^{1/4}} \right)^{s+1} \\ 
&\hspace{2cm}+ \beta' \frac{Z_{t_0}}{n} \left( \prod_{i=0}^{t-1-t_0-1} \rho_{t_0+i} \right) 
\left( 1 + \frac{\log^2 n}{\rho_{t_0} n^{1/4}} \right)^{t-1-t_0}.
\end{split}
\end{equation*}
\end{proof}

We now show inductively that $\rho_t$ is uniformly bounded by some constant that is less than 1, as long as $\mathcal{D}_s$ and 
$\mathcal{E}_s$ are realised for all $t_0 \leq s \leq t$. 
Here, we will require that $Z_{t_0} /n$ is small enough, which we can assume as this quantity is proportional to 
$\epsilon$. 
\begin{claim} \label{clm:eigenboundfinal}
For any $\delta >0$, there exists an $\eps$ such that the following holds for $t \leq T \wedge (t_0+ \log^2 n)$. If $Z_{t_0}/n < \eps$, then
provided that $\cap_{t_0\leq s \leq t} \{ \mathcal{D}_s \cap \mathcal{E}_s \}$ is realised we have 
$$ \rho_t < \rho_{t_0} + \delta. $$
\end{claim}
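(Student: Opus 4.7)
The plan is a routine induction on $t$ using Claim~\ref{clm:rho_solution}, combined with the observation that $\rho_{t_0}$ is uniformly bounded away from $1$. Since the conclusion $\rho_t < \rho_{t_0} + \delta$ only gets weaker as $\delta$ grows, I may assume $\delta$ is so small that $\tilde{\rho} := \rho_{t_0} + \delta$ still satisfies $\tilde{\rho} < 1$; this is possible by~\eqref{eq:eigenvalue}, which gives $\rho_{t_0} \le \kappa_0 < 1$. Abbreviate $\eta_n := 1 + \log^2 n / (\rho_{t_0} n^{1/4})$, so $\eta_n \to 1$ as $n \to \infty$, and fix $n$ large enough that $\tilde{\rho}\eta_n < 1$; then the constant $M := \tilde{\rho}\eta_n / (1 - \tilde{\rho}\eta_n)$ depends only on $\tilde{\rho}$ (hence on $\delta$ and $\kappa_0$). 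Finally set $\eps := \delta / (2\beta' M)$.

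I prove by induction on $t \in \{t_0, t_0+1, \ldots, T \wedge (t_0 + \log^2 n)\}$ the stronger statement $\rho_t \le \rho_{t_0} + \beta' M \cdot (Z_{t_0}/n)$, which by the choice of $\eps$ is strictly less than $\rho_{t_0} + \delta$ whenever $Z_{t_0}/n < \eps$. The base case $t = t_0$ is immediate. For the inductive step, assume that $\rho_{t_0+j} \le \tilde{\rho}$ for every $j \in \{0, \ldots, t - t_0 - 1\}$. Since the event $\bigcap_{t_0 \le s \le t}\{\mathcal{D}_s \cap \mathcal{E}_s\}$ is assumed, Claim~\ref{clm:rho_solution} applies and, using the inductive bound $\prod_{i=0}^s \rho_{t_0+i} \le \tilde{\rho}^{s+1}$, yields
$$\rho_t \;\le\; \rho_{t_0} + \beta' \frac{Z_{t_0}}{n} \sum_{s=0}^{t-t_0-2}(\tilde{\rho}\eta_n)^{s+1} \;\le\; \rho_{t_0} + \beta' M \cdot \frac{Z_{t_0}}{n},$$
where in the last step the finite partial sum is bounded by the convergent geometric series $\sum_{s\ge 0}(\tilde{\rho}\eta_n)^{s+1} = M$. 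This closes the induction.

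The argument is essentially mechanical once one observes that the geometric factor $\tilde{\rho}\eta_n$ stays bounded away from $1$ uniformly in $t \le t_0 + \log^2 n$; consequently the sum in Claim~\ref{clm:rho_solution} is uniformly bounded by $M$, and the restriction to the $\log^2 n$-window plays no role in the bound itself (it is needed only elsewhere, to ensure that all the events $\mathcal{D}_s$, $\mathcal{E}_s$ simultaneously hold with high probability across the window). The only mild subtlety is the circular appearance of $\rho_{t_0+i}$ on both sides of Claim~\ref{clm:rho_solution}'s estimate; this is precisely what the induction is designed to resolve.
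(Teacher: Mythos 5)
Your proof is correct and follows essentially the same route as the paper's: induction on $t$, feeding the hypothesis $\rho_{t_0+i}<\rho_{t_0}+\delta$ into Claim~\ref{clm:rho_solution} and bounding the resulting sum by a geometric series, then choosing $\eps$ proportional to $\delta$. The only (cosmetic) differences are that you absorb the factor $1+\log^2 n/(\rho_{t_0}n^{1/4})$ into the geometric ratio instead of extracting $(1+o(1))$ via $t-t_0\le\log^2 n$, and you make explicit the harmless reduction to $\rho_{t_0}+\delta<1$, which the paper leaves implicit.
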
 
\begin{proof}[Proof of Claim~\ref{clm:eigenboundfinal}] 
We will show this by induction. 
Clearly $\rho_0$ satisfies the inequality. Assume now that this holds for all $t_0 \leq s<t$, that is, $\rho_s < \rho_{t_0}+\delta$. 
Then by Claim~\ref{clm:rho_solution} we have 
\begin{equation*}
\begin{split} 
\rho_t &\leq \rho_{t_0} + \beta' \frac{Z_{t_0}}{n} \sum_{s=0}^{t-t_0-2} \left( \prod_{i=0}^s \rho_{t_0+i} \right) 
\left( 1 + \frac{\log^2 n}{ \rho_{t_0} n^{1/4}} \right)^{s+1} \\
& \leq \rho_{t_0} + \beta' \frac{Z_{t_0}}{n} \left( 1 + \frac{\log^2 n}{\rho_{t_0} n^{1/4}} \right)^{t-t_0} \sum_{s=0}^{t-t_0-2} 
\left( \prod_{i=0}^s \rho_{t_0 + i} \right) \\
& \stackrel{\rho_{t_0+i} < \rho_{t_0} + \delta}{\leq} 
\rho_{t_0} + \beta' \frac{Z_{t_0}}{n} \left( 1 + \frac{\log^2 n}{\rho_{t_0} n^{1/4}} \right)^{t-t_0} 
\sum_{s=0}^{t-t_0}  \left( \rho_{t_0}+\delta \right)^{s+1} \\
&\leq \rho_{t_0} + \beta' \frac{Z_{t_0}}{n} \left( 1 + \frac{\log^2 n}{\rho_{t_0} n^{1/4}} \right)^{t-t_0} 
\frac{1}{1- \rho_{t_0} -\delta} \\
&\stackrel{t -t_0\leq \log^2 n}{\leq} \rho_{t_0} + \beta' \frac{Z_{t_0}}{n} \frac{1 + o(1)}{1- \rho_{t_0} -\delta} 
 \leq \rho_{t_0} + \beta' \eps~\frac{1+o(1)}{1-\rho_{t_0} -\delta } < \rho_{t_0} +\delta,
\end{split}
\end{equation*}
if $\eps$ is small enough. 
\end{proof}
The above claim together with Claims~\ref{clm:conc_proc} and~\ref{clm:deg_Conc}, it follows that 
$T < \log_{\frac{1}{\rho_{t_0}+\delta}} n$ with probability $1-o(n^{-1})$. 
Note that conditional on the event $\cap_{t_0 \leq s \leq T} \mathcal{D}_s \cap \mathcal{E}_s$, the total 
number of a vertices infected until time $T$ is proportional to $u(t_0)$. 

Thereafter, the process is stochastically bounded from above by a sub-critical multi-type branching process. 
Let $T_1 = \min \{ t > T \ : \ Z_t> n^{1/2}\}$. Arguing as above, we can show that for any $T \leq t < T_1$, 
with conditional probability $1-o(n^{-1})$ we have 
$$ Z_{t+1} \leq  n^{1/2}.$$ 
We denote this event by $\mathcal{E}_t'$. 
Consider the time window $I_f:=[T, T_1\wedge (T+\log^2 n)]$. Then the event $\cap_{t \in I_f} \mathcal{E}_t'$ occurs with 
probability $1-o(1)$. 
Conditional on this event, for any $t$ in this window the process is stochastically bounded from above by a multi-type branching process
where the largest positive eigenvalue is bounded from above by $\rho_{t_0} + 2 \delta =:\rho_f < 1$. 
This follows from (\ref{eq:eigenineq}) and Claim~\ref{clm:eigenboundfinal}, for any $n$ sufficiently large, 
since $\mathcal{D}_t$ occurs with probability $1-o(n^{-1})$. 
By Theorem 4 in Section V.4 of~\cite{bk:AthNey} (in fact, by the proof of it) there exists a constant $Q>0$ such that 
$$ \PRO { \overline{u} (T+i) \not = 0 \ | \overline{u}(T)}= Q~ (\overline{u}(T),\xi)~\rho_f^i  + O(\rho_f^{2i}). $$
Therefore for $i= \log_{1/\rho_f} n$, the above probability is $o(1)$ uniformly over every realisation of $\overline{u}(T)$.

\subsection{Auxiliary lemmas}

% Here we will consider properties of the solutions of system (\ref{eq:DifferentialEquations}) in the case where $\bW^{(\ell, \gamma)}$ is
% sufficiently close to $\bw$.
Recall that $\hat{\tau}^{(\ell, \gamma)}$ denotes the minimum $\tau >0$ such that $\mu_{\cU} (\tau) = 0$.
Recall also that $\hat{y}$ is the smallest positive solution of $f_r(y;W_F^*,p)=0$ and that we have assumed that
$f_r'(\hat{y};W_F^*,p) < 0$. Also, recall that
$$ \alpha (y ) := p~(1-\gamma ) +\gamma'  +   (1-p)
\sum_{i=1}^{p_\ell}  \gamma_i  \psi_r \left( W_i y \right).$$
The following lemma shows that if $\gamma$ is taken small enough and $\ell$ is a large positive integer,
then $\alpha (\hat{y}_{\ell,\gamma})$ and $\mu_{\cU}'(\hat{\tau}^{(\ell,\gamma)})$ can be approximated
by the corresponding functions of $\hat{y}$.
 For technical reasons, we need to restrict ourselves to those $\gamma$s for which $1-\gamma \in F([0,\infty) )$ --
we will be referring to such a $\gamma$ as \emph{being in the range of $F$}.
\begin{lemma} \label{lem:stableSol}
Assume that $f_r'(\hat{y};W_F^*,p) < 0$.
For $\gamma >0$, let $\{ (\bW^{(\ell,\gamma)}(n))_{n\geq 1}\}_{\ell \in \mathbb{N}}$ be an $F$-convergent 
$(\ell, \gamma)$-discretisation of the weight sequence $(\mathbf{w}(n))_{n \geq 1}$ with error $\rho>0$.
Then there exists $\gamma_2$ having the property that for any $\gamma < \gamma_2$ which
is in the range of $F$ and any $\delta > 0$, there exists a subsequence $\{ \ell_k \}_{k \in \mathbb{N}}$ such that
for every $\ell \in \{ \ell_k \}_{k \in \mathbb{N}}$:
\begin{enumerate}
\item $\mu_{\cU}' (\hat{\tau}^{(\ell,\gamma)}) < 0$;
\item $\left|\alpha (\hat{y}_{\ell,\gamma }) - \left(p + (1-p)\mathbb{E}(\psi_r (W_F \hat{y})) \right) \right| < \delta$.
\end{enumerate}
\end{lemma}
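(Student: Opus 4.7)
The plan is to work with an equivalent fixed-point formulation. By Proposition~\ref{eq:uSolution}, setting $y:=I(\tau)/d$, the condition $\mu_{\cU}(\tau)=0$ becomes $h_{\ell,\gamma}(y):=\phi_{\ell,\gamma}(y)-y=0$, where
$$\phi_{\ell,\gamma}(y)\;:=\;\tfrac{W_\gamma'}{d}+\tfrac{p}{d}\sum_{i=1}^{p_\ell}W_i\gamma_i+\tfrac{1-p}{d}\sum_{i=1}^{p_\ell}W_i\gamma_i\,\psi_r(W_iy),$$
so $\hat y_{\ell,\gamma}$ is the smallest positive root of $h_{\ell,\gamma}$ and the target function is $f_r(\cdot;W_F^*,p)=\phi-\mathrm{id}$ with $\phi(y):=p+(1-p)\ex{\psi_r(W_F^*y)}$. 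First I would show $\sup_{y\in[0,1]}|\phi_{\ell,\gamma}(y)-\phi(y)|$ can be made arbitrarily small by taking $\gamma<\gamma_2$ small and $\ell$ large along a subsequence depending on $\gamma$. The constants $W_\gamma'/d\le 5W_\gamma(F)/d$ and $\gamma'\le \gamma+2W_\gamma(F)/x_0$ vanish as $\gamma\downarrow 0$ (since $\mathbb{E}[W_F]<\infty$ forces $W_\gamma(F)\to 0$), and $\sum_iW_i\gamma_i/d\to 1$ via Definition~\ref{def:F-conv}(2). The Poisson sum equals $\big((1-\gamma+\gamma')\ex{U^{(\ell,\gamma)}}/d\big)\int_{[x_0,C_\gamma]}\psi_r(wy)\,dF^{*(\ell,\gamma)}(w)$; Lemma~\ref{lem:size_biased_approx} and $\|\psi_r\|_\infty\le 1$ give uniform convergence of this integral to $\int_{[x_0,C_\gamma]}\psi_r(wy)\,dF^*(w)$, and the latter tends to $\ex{\psi_r(W_F^*y)}$ because $\PRO{W_F^*>C_\gamma}=W_\gamma(F)/d\to 0$.

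\textbf{Step 2 (Convergence of the root).} Given the uniform approximation, I would extract $\hat y_{\ell,\gamma}\to\hat y$ using $f_r'(\hat y;W_F^*,p)<0$. Since $f_r(0;\cdot)=p>0$ and $\hat y$ is the smallest positive zero, $f_r>0$ on $[0,\hat y)$; compactness yields $\min_{[0,\hat y-\epsilon]}f_r=:\delta_\epsilon>0$ for any $\epsilon>0$, while transversality yields some $\eta=\eta(\epsilon)>0$ with $f_r(\hat y+\eta)\le -\delta_\epsilon$. Shrinking $\gamma<\gamma_2$ and taking $\ell$ large along the subsequence until $\|h_{\ell,\gamma}-f_r\|_\infty<\delta_\epsilon/2$, the intermediate value theorem confines the smallest positive root $\hat y_{\ell,\gamma}$ of $h_{\ell,\gamma}$ to $(\hat y-\epsilon,\hat y+\eta)$; letting $\epsilon\downarrow 0$ gives $\hat y_{\ell,\gamma}\to\hat y$.

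\textbf{Step 3 (Claim (2)).} Writing $\alpha(\hat y_{\ell,\gamma})=p(1-\gamma)+\gamma'+(1-p)\sum_i\gamma_i\,\psi_r(W_i\hat y_{\ell,\gamma})$, the first two terms tend to $p$ as $\gamma\downarrow 0$. For the sum, Definition~\ref{def:F-conv}(1) gives $\|F^{(\ell,\gamma)}-F\|_{\infty,[x_0,C_\gamma]}=O(\gamma+W_\gamma(F)/C_\gamma)$, whence integration against the bounded $\psi_r(\cdot y)$ produces $\sum_i\gamma_i\,\psi_r(W_iy)\to\int_{[x_0,C_\gamma]}\psi_r(wy)\,dF(w)$ uniformly in $y\in[0,1]$; since $\PRO{W_F>C_\gamma}=\gamma\to 0$ and $\psi_r\le 1$, this tends to $\ex{\psi_r(W_Fy)}$, and continuity of $y\mapsto\ex{\psi_r(W_Fy)}$ combined with $\hat y_{\ell,\gamma}\to\hat y$ yields $\alpha(\hat y_{\ell,\gamma})\to p+(1-p)\ex{\psi_r(W_F\hat y)}$, so the desired bound $<\delta$ holds for $\gamma$ sufficiently small and $\ell$ in the subsequence.

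\textbf{Step 4 (Claim (1)).} From \eqref{eq:wuDifferentialEquation} and $\psi_r'(x)=e^{-x}x^{r-1}/(r-1)!$ one has
$$\mu_{\cU}'(\tau)\;=\;G(\tau)\Bigl[-1+\tfrac{1-p}{d}\sum_{i=1}^{p_\ell}W_i^2\,\gamma_i\,\psi_r'(W_iI(\tau)/d)\Bigr].$$
Since $G(\tau)=\mu_{\cU}(\tau)/\nu(\tau)$ is the average weight of the unexposed infected set in the continuous limit, and every weight in $\bW^{(\ell,\gamma)}$ is at least $x_0$, we have $G(\tau)\ge x_0>0$ on the whole trajectory; at $\hat\tau^{(\ell,\gamma)}$, where both $\mu_{\cU}$ and $\nu$ vanish, L'H\^opital identifies the limit as the weighted average $\sum W_i^2\gamma_i\psi_r'(W_i\hat y_{\ell,\gamma})/\sum W_i\gamma_i\psi_r'(W_i\hat y_{\ell,\gamma})\ge x_0$. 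So it suffices to prove that the bracketed quantity is strictly negative at $y=\hat y_{\ell,\gamma}$. Applying the Step~1 approximation scheme to the bounded $w\mapsto w^2\psi_r'(wy)$ on $[x_0,2C_\gamma]\times[0,1]$ and using $\ex{W_F^*\psi_r'(W_F^*y)}=\ex{W_F^2\psi_r'(W_Fy)}/d$, the bracket converges to $-1+(1-p)\ex{W_F^*\psi_r'(W_F^*\hat y)}=f_r'(\hat y;W_F^*,p)<0$ by hypothesis; strict negativity is preserved for $\gamma<\gamma_2$ small enough and $\ell$ far enough along the subsequence. The chief technical obstacle throughout is the coordinated handling of tails from weights $>C_\gamma$ and the interplay between the limits $\gamma\downarrow 0$ and $\ell\to\infty$: these are controlled via $W_\gamma(F)\to 0$, the boundedness of $\psi_r$ and $\psi_r'$, and the $F$-convergence hypothesis, which forces the subsequence $\{\ell_k\}$ to depend on $\gamma$ (namely, $\ell>L_1(\gamma)$ from Definition~\ref{def:F-conv}).
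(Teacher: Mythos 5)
Your overall route is sound and in fact streamlines the paper's argument. The paper also reduces everything to the rescaled fixed-point function $\hat{\mu}_{\cU}(x)=f_r^{(\ell,\gamma)}(x)$ in the variable $x=I(\tau)/d$, but it only proves a \emph{local} approximation statement (Proposition~\ref{prop:cts_approx}), then extracts a uniformly convergent subsequence via equicontinuity and the Arzel\'a--Ascoli theorem, and proves root convergence by a contradiction argument (Proposition~\ref{prop:roots_approx}). Your Step~1 replaces this by a direct sup-norm approximation of $h_{\ell,\gamma}$ by $f_r(\cdot;W_F^*,p)$ on $[0,1]$, after which the intermediate value theorem localizes the root and no subsequence is needed at all (any large $\ell$ given small $\gamma$ works, which is stronger than the statement). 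This uniform bound is indeed available, but note that $\|\psi_r\|_\infty\le1$ alone does not give it: you need integration by parts against $F^{*(\ell,\gamma)}-F^*$ together with the monotonicity (bounded variation) of $w\mapsto\psi_r(wy)$, exactly as in the paper's Lemma~\ref{lem:size_biased_approx}/Proposition~\ref{prop:cts_approx}; similarly in Step~4 the function $w\mapsto w\,\psi_r'(wy)$ is bounded by $r/y$, so you must keep $y$ near $\hat y>0$ (the paper does this by writing the derivative with the prefactor $r/x$ and applying its Corollary~\ref{cor:cts_approx} to $p_r$). Your L'H\^opital discussion of $G(\hat\tau^{(\ell,\gamma)})$ is unnecessary and somewhat shaky: the paper avoids it by observing $x'(\tau)>0$, so only the sign of the bracket matters. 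These are presentational, not structural, issues.

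There is, however, one genuine gap: your Step~2 assumes $p>0$. You use ``$f_r(0;\cdot)=p>0$'' and ``$\min_{[0,\hat y-\epsilon]}f_r=\delta_\epsilon>0$'', but the lemma is invoked through Theorem~\ref{thm:FinalInfectionI} with $p\in[0,1)$, and the case $p=0$ is precisely what is needed for Theorem~\ref{thm:power_law}. When $p=0$ one has $f_r(0)=0$, so $\inf_{[0,\hat y-\epsilon]}f_r=0$ and your sup-norm bound $\|h_{\ell,\gamma}-f_r\|_\infty<\delta_\epsilon/2$ no longer excludes spurious roots of $h_{\ell,\gamma}$ near the origin; since $h_{\ell,\gamma}(0)=W_\gamma'/d+p\,c(\gamma)\hat d^{(\ell,\gamma)}/d$ is itself of the same (small, $\gamma$-dependent) order as the approximation error, the smallest positive root could a priori collapse towards $0$ rather than converge to $\hat y$. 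This is exactly the delicate point the paper's Proposition~\ref{prop:roots_approx} addresses: it rules out roots in $[0,\hat y-\delta_0]$ by a contradiction argument whose last step uses the strictly positive constant term $W_\gamma'/d$ of $f_r^{(\ell,\gamma)}$ (a root $<W_\gamma'/(2d)$ is impossible because there $f_r^{(\ell,\gamma)}(y)\ge W_\gamma'/d-y>0$), together with the fact that under the hypotheses $f_r>0$ on $(0,\hat y)$. Your argument can be patched in the same spirit — treat $[0,W_\gamma'/(2d)]$ by the constant term and $[W_\gamma'/(2d),\hat y-\epsilon]$ by a positive minimum of $f_r$, choosing $\ell$ large for the fixed $\gamma$ — but as written the step fails for $p=0$ and the missing ingredient is precisely the role of $W_\gamma'$.
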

\begin{proof}
As above, we have set $x=x (\tau) = I(\tau) /d$.
Since $I(\tau) = \int_0^{\tau} F(s) ds$, we have $x' (\tau)  >0$, for all $\tau >0$. Thereby, setting $\hat{\mu}_{\cU} (\tau)
= \mu_{\cU}(\tau )/d$, it suffices to show that $\hat{\mu}_{\cU}' (x(\hat{\tau}^{(\ell,\gamma)})) <0$.
Recall that $\hat{y}_{\ell,\gamma} = x (\hat{\tau}^{(\ell,\gamma)})$.

We will use (\ref{eq:probs}) in order to express the $\gamma_i$s in terms of the $\gamma_i'$s:
$\gamma_i = (1-\gamma +\gamma')\gamma_i'$.
The expression for $\mu_{\cU}$ as it is given in Proposition~\ref{eq:uSolution} yields the following
\begin{equation} \label{eq:FixedPoint1st}
\begin{split}
&\hat{\mu}_{\cU} (x )  = {W_{\gamma}'\over d} +p~ {1\over d}\sum_{i=1}^{p_\ell} W_i \gamma_i  -x +
(1-p)~{1\over d} \sum_{i=1}^{p_\ell}  W_i \gamma_i \PRO {\Po \left( W_i  x  \right) \geq r} \\
&= {W_{\gamma}'\over d} +(1-\gamma + \gamma' ) \left( p~ {\hat{d}^{(\ell,\gamma )}\over d}   +
(1-p)~{d^{(\ell,\gamma)}\over d} \sum_{i=1}^\ell  {W_i \gamma_i' \over d^{(\ell,\gamma)}} \PRO {\Po \left( W_i  x  \right) \geq r}
\right) -x,
\end{split}
\end{equation}
where $d^{(\ell,\gamma)} = \int_{0}^{\infty} x dF^{(\ell,\gamma)}(x)$ and
$\hat{d}^{(\ell,\gamma)} = \int_{0}^{C_\gamma} x dF^{(\ell,\gamma)}(x)= \sum_{i=1}^{p_\ell} W_i \gamma_i$.
Hence, the second sum in the above expression can be rewritten as
\begin{equation*}
 \sum_{i=1}^{p_\ell}  {W_i \gamma_i' \over d^{(\ell,\gamma)}} \PRO {\Po \left( W_i  x  \right) \geq r} =
\int_{0}^{C_\gamma} \psi_r \left( y  x  \right) dF^{*(\ell,\gamma)}(y),
\end{equation*}
where $F^{*(\ell, \gamma)}$ is the distribution function of the $U^{(\ell,\gamma)}$ size-biased distribution.

We set $c(\gamma) = 1 -\gamma +\gamma'$ and
write $p^{(\ell,\gamma)} =  {W_{\gamma}'\over d c(\gamma)} +p~ {\hat{d}^{(\ell,\gamma)}\over d}$.
The expression in (\ref{eq:FixedPoint1st})  becomes
\begin{equation*} %\label{eq:mutrans}
\hat{\mu}_{\cU} (x) = c(\gamma) \left( p^{(\ell,\gamma)}  + (1-p)~{d^{(\ell,\gamma)}\over d}~
\int_{0}^{C_\gamma} \psi_r \left( y  x  \right) dF^{*(\ell,\gamma)}(y) \right) - x.
\end{equation*}
%Note that $\hat{x}$ satisfies $\hat{\mu}_{\cU} (\hat{x}) = 0$.
Hence, the derivative of $\hat{\mu}_{\cU} (x)$ is
\begin{equation} \label{eq:muder}
\begin{split}
\hat{\mu}_{\cU}'(x) &= -1 + c(\gamma ) (1-p)~{d^{(\ell,\gamma)} \over d}~
\int_{0}^{C_\gamma} y e^{-y x}
{\left( y x \right)^{r-1} \over (r-1)!} dF^{*(\ell,\gamma)} (y)  \\
&= -1 + c(\gamma) (1-p)~{d^{(\ell,\gamma )} \over d}~{r \over x}~
\int_{0}^{C_\gamma} e^{-y x}
{\left( y x \right)^r \over r!} dF^{*(\ell,\gamma)}(y).
\end{split}
\end{equation}
Similarly, we can write
\begin{equation} \label{eq:alpha_new}
\alpha ( x ) = p~(1-\gamma ) +\gamma'  +   c(\gamma ) (1-p)
\int_{0}^{C_\gamma}\psi_r \left( y x \right) dF^{(\ell,\gamma)} (y).
\end{equation}

For real numbers $y$ and $\delta >0$, let $B(y;\delta)$ denote the open ball of radius $\delta$ around $y$.
We will later show the following statement.
\begin{proposition} \label{prop:cts_approx}
Let $f:\mathbb{R} \rightarrow \mathbb{R}$ be a bounded function which is everywhere differentiable, has bounded derivative and 
satisfies $f(0)=0$.
Let also $y_1 \in \mathbb{R}$.
For any $\delta>0$ there exists $\gamma_3 = \gamma_3(\delta)$ with the property that for any $\gamma < \gamma_3$ in the range of
$F$, there exist $\ell_0 = \ell_0(\delta, \gamma) >0$ and $\delta' = \delta' (\delta, \gamma)$ such that for any $\ell > \ell_0$ and any
$y_2 \in B(y_1;\delta')$,
\begin{equation*}
\left| \int_{0}^{C_\gamma} f(y y_2) dF^{*(\ell,\gamma)}(y) - \mathbb{E} \left( f(W_F^* y_1) \right)\right|
< \delta,
\end{equation*}
and
\begin{equation*}
\left| \int_{0}^{C_\gamma} f(y y_2) dF^{(\ell,\gamma)}(y) - \mathbb{E} \left( f(W_F y_1) \right)\right|
< \delta.
\end{equation*}
\end{proposition}
We will further show that $\hat{y}_{\ell,\gamma}$ is close to $\hat{y}$ over a subsequence $\{\ell_k \}_{k\in \mathbb{N}}$.
\begin{proposition} \label{prop:roots_approx}
There exists a $\gamma_4>0$ such that for all $\gamma < \gamma_4$ and any $\delta' > 0$ there exists a subsequence
$\{\ell_k \}_{k \in \mathbb{N}}$ such that
$\hat{y}_{\ell_k,\gamma} \in B(\hat{y};\delta').$
\end{proposition}
The above two propositions yield the following:
\begin{corollary} \label{cor:cts_approx}
Let $f:\mathbb{R} \rightarrow \mathbb{R}$ be a bounded function which is everywhere differentiable and has bounded derivative.
For any $\delta>0$, any
$\gamma < \gamma_3 \wedge \gamma_4$, which is in the range of $F$, there exists a
subsequence $\{ \ell_k \}_{k \in \mathbb{N}}$ such that
\begin{equation*}
\left|\int_{0}^{C_\gamma} f(y \hat{y}_{\ell_k, \gamma}) dF^{*(\ell_k,\gamma)}(y) - \mathbb{E} \left( f(W_F^* \hat{y}) \right)\right|
< \delta,
\end{equation*}
and
\begin{equation*}
\left|\int_{0}^{C_\gamma} f(y \hat{y}_{\ell_k, \gamma}) dF^{(\ell,\gamma)}(y) - \mathbb{E} \left( f(W_F \hat{y}) \right)\right|
< \delta.
\end{equation*}
\end{corollary}
The two statements of the lemma can be deduced from (\ref{eq:muder}) and (\ref{eq:alpha_new}), if we let $f(x)$ be $\psi_r (x)$ in
the former case, and $e^{-x}{x^{r} \over r!}$ in the latter. Note that the choice of the subsequence is determined through
Proposition~\ref{prop:roots_approx} and can be the same for both choices of $f(x)$.

Observe that both functions are bounded (by 1), they are differentiable and have bounded
derivatives.
% and, moreover, for any fixed $\gamma$
% we have that $d^{(\ell,\gamma)} \rightarrow d$ as $\ell \rightarrow \infty$ (cf. Definition~\ref{def:F-conv}). Also,
% $\hat{d}^{(\ell,\gamma)} \rightarrow
% \mathbb{E} [W_F;W_F< C_\gamma]$, as $\ell \rightarrow \infty$ and $\mathbb{E} [W_F; W_F < C_\gamma] \rightarrow d$ as
%$\gamma \rightarrow 0$.
By the second part of Definition~\ref{def:F-conv} and the fact that $c(\gamma ) \rightarrow 1$ as $\gamma \downarrow 0$ we have
\begin{equation} \label{eq:II} c(\gamma) \left|{d^{(\ell, \gamma)} \over d} -1 \right| < \delta, \end{equation}
for any $\gamma$ that is small enough and any $\ell$ that is large enough.
Both parts of the lemma now follow from Corollary~\ref{cor:cts_approx} together with (\ref{eq:II}). 

We now proceed with the proofs of Propositions~\ref{prop:cts_approx} and~\ref{prop:roots_approx}. 
There, we shall need the following claim, which shows that $p^{(\ell,\gamma)}$ is close to $p$. 
\begin{claim} \label{clm:hat_d_approx}
There is a function $r: \mathbb{R}^+ \rightarrow \mathbb{R}^+$ such that $r(\gamma ) \rightarrow 0$ as $\gamma \downarrow 0$
with the following property. Let $\gamma_0$ and $L_1 (\gamma)$ be as in Definition~\ref{def:F-conv}. For any $0 < \gamma < \gamma_0$ and $\ell >  L_1 (\gamma)$ 
\begin{equation*} 
\big| \hat{d}^{(\ell,\gamma)} - d \big| < r(\gamma ).
\end{equation*}
\end{claim}
\begin{proof}
From the definition we obtain that
$$d = \int_{0}^{\infty} x dF(x) = \int_{0}^{C_\gamma} x dF(x) + W_\gamma. $$
Note that $W_\gamma$ tends to 0 as $\gamma \downarrow 0$. For the integral on the right-hand side we use again the integration-by-parts formula and obtain
\begin{equation} \label{eq:1st_Int_I}
\begin{split}
\int_{0}^{C_\gamma} x dF(x)
% =  F(C_\gamma +) C_\gamma  - 0\cdot  F(0-) - \int_{0}^{C_\gamma} F(x) dx \\
&= F(C_\gamma+) C_\gamma   - \int_{0}^{C_\gamma} F(x) dx .
\end{split}
\end{equation}
Similarly, we write
\begin{equation} \label{eq:2nd_Int_II}
\begin{split}
\hat{d}^{(\ell,\gamma)} = \int_{0}^{C_\gamma}  x dF^{(\ell,\gamma)}(x) =
 F^{(\ell,\gamma)} (C_\gamma +) C_\gamma -  \int_{0}^{C_\gamma} F^{(\ell,\gamma)}(x) dx.
\end{split}
\end{equation}
The first part of Definition~\ref{def:F-conv} implies that if $0 < \gamma < \gamma_0$ and $\ell > L_1 (\gamma)$, then since
both $F^{(\ell,\gamma)}, F$ are right-continuous we have
$$|F^{(\ell, \gamma)} (C_\gamma+) - F(C_\gamma+)| = |F^{(\ell, \gamma)} (C_\gamma) - F(C_\gamma)| < 2(\gamma + W_\gamma / C_\gamma) =: y(\gamma). $$
Thus, (\ref{eq:1st_Int_I}) and (\ref{eq:2nd_Int_II}) together yield
\begin{equation*}
\begin{split}
&\left|\int_{0}^{C_\gamma}  x dF(x)  -  \int_{0}^{C_\gamma} x d F^{(\ell,\gamma)}(x) \right| \leq  y(\gamma)  C_\gamma +
\int_{0}^{C_\gamma} |F(x) - F^{(\ell,\gamma)} (x) |  d x \\
&  \qquad  \stackrel{(\mbox{{\small Def.}}~\ref{def:F-conv})}{<}
y(\gamma) C_\gamma + y (\gamma) \left( C_{\gamma}  - 0  \right).
%<  2 y (\gamma ) C_\gamma  < 4 ( \gamma C_\gamma +  W_\gamma) =: r_2(\gamma ).
\end{split}
\end{equation*}
We finally choose $r(\gamma ) = W_\gamma + 2y(\gamma)C_\gamma$; due to~\eqref{eq:convpr} this tends to 0 as  $\gamma \downarrow 0$.
\end{proof}
% The above claim together with the fact that $W_{\gamma}' \rightarrow 0$ as $\gamma \rightarrow 0$ imply that
% if $\gamma$ is small enough and $\ell$ is large enough, we have
% \begin{equation} \label{eq:III} \left|p^{(\ell,\gamma)} - p \right|< \delta \ \mbox{and} \
% \left| p (1-\gamma) + \gamma' - p  \right| < \delta.
% \end{equation}
% We shall use these facts later. 

\begin{proof}[Proof of Proposition~\ref{prop:cts_approx}]
The proof of this proposition will proceed in two steps.
Firstly, we will show that for any $\gamma < \gamma_1$ (cf. Lemma~\ref{lem:size_biased_approx}) there exist
$\delta' = \delta'(\delta, \gamma)$ and $\ell_0 = \ell_0 (\delta , \gamma)$ such that for any $y_2 \in B(y_1;\delta')$ and
$\ell > \ell_0$ we have
\begin{equation} \label{eq:target_I}
\left| \int_{0}^{C_\gamma} f(y y_2) dF^{*(\ell,\gamma)}(y) -
\int_{0}^{C_\gamma} f(y y_1) dF^{*}(y) \right| < \delta /2.
\end{equation}
The proposition will follow if show that there exists $\gamma_2 = \gamma_2 (\delta)$ such that for any $\gamma < \gamma_2$
it holds that
\begin{equation}  \label{eq:target_II}
\left| \int_{C_\gamma}^{\infty} f(y y_1) dF^{*}(y)  \right| < \delta /2.
\end{equation}
If we show these inequalities, then we deduce
\begin{equation*}
\begin{split}
& \left| \int_{0}^{C_\gamma} f(y y_2) dF^{*(\ell,\gamma)}(y)- \ex {f(W_F^*y_1)} \right| \\
\le &\left| \int_{0}^{C_\gamma} f(y y_2) dF^{*(\ell,\gamma)}(y) -
\int_{0}^{C_\gamma} f(y y_1) dF^{*}(y) \right| + \left| \int_{C_\gamma}^{\infty} f(y y_1) dF^{*}(y) \right| 
 \stackrel{(\ref{eq:target_I}), (\ref{eq:target_II})}{<} \delta.
\end{split}
\end{equation*}
The proof for the case of $U^{(\ell,\gamma)}$ proceeds along the same lines.
We can show that for any $\gamma < \gamma_1$ there exist
$\delta' = \delta'(\delta, \gamma)$ and $\ell_0 = \ell_0 (\delta , \gamma)$ such that for any $y_2 \in B(y_1;\delta')$ and
$\ell > \ell_0$ we have
\begin{equation} \label{eq:target_I_II}
\left| \int_{0}^{C_\gamma} f(y y_2) dF^{(\ell,\gamma)} (y) -
\int_{0}^{C_\gamma} f(y y_1) dF(y) \right| < \delta /2.
\end{equation}
Then we show that there exists $\gamma_2 = \gamma_2 (\delta)$ such that for any $\gamma < \gamma_2$
it holds that
\begin{equation}  \label{eq:target_II_II}
\left| \int_{C_\gamma}^{\infty} f(y y_1) dF(y) \right| < \delta /2.
\end{equation}
As before, from (\ref{eq:target_I_II}) and (\ref{eq:target_II_II}) we deduce
\begin{equation*}
\begin{split}
& \left| \int_{0}^{C_\gamma} f(y y_2) dF^{(\ell,\gamma)} (y) - \ex {f(W_F y_1)} \right| \\
\le & \left| \int_{0}^{C_\gamma} f(y y_2) dF^{(\ell,\gamma)} (y) -
\int_{0}^{C_\gamma} f(y y_1) dF(y) \right|  + \left| \int_{C_\gamma}^{\infty} f(y y_1) dF(y) \right|  \stackrel{(\ref{eq:target_I_II}), (\ref{eq:target_II_II})}{<} \delta.
\end{split}
\end{equation*}

We proceed with the proofs of (\ref{eq:target_I}) and (\ref{eq:target_II}) -- the proofs of (\ref{eq:target_I_II}) and
(\ref{eq:target_II_II}) are very similar (in fact, simpler) and are omitted.
\begin{proof}[Proof of (\ref{eq:target_I})]
We begin with the specification of $\delta'$. We let $\delta'$ be such that whenever $|y_1 - y_2 | < \delta'$ we have
\begin{equation} \label{eq:mod_of_continuity}
\left|  f(x y_1) -  f(x y_2) \right| < \delta/4,
\end{equation}
for any $x \in [x_0,C_\gamma]$. This choice of $\delta'$ is possible since $f$ is continuous and therefore uniformly continuous in any
closed interval. Consider $y_2 \in B(y_1;\delta')$. We then have
\begin{equation} \label{eq:Intermediate_I}
\begin{split}
%&\left| \ex {{\mathbf 1}_{\{ W_F^{(\ell,\gamma) \}} < C_\gamma} f(W_F^{(\ell,\gamma)}y_2)} -
%\ex {{\mathbf 1}_{\{ W_F^* < C_\gamma \}} f(W_F^*y_1)} \right| = \\
& \left| \int_{0}^{C_\gamma} f(x y_2) d F^{*(\ell,\gamma)} (x) -
\int_{0}^{C_\gamma} f(x y_1) d F^*(x)  \right|\\
%& \leq
%\left| \int_{0}^{C_\gamma} f(x y_2) d F^{*(\ell,\gamma)} (x)  - \int_{0}^{C_\gamma} f(x y_1) d F^{*(\ell,\gamma)} (x) \right| \\
%&\hspace{1cm}+ \left|\int_{0}^{C_\gamma} f(x y_1) d F^{*(\ell,\gamma)} (x)  -
%\int_{0}^{C_\gamma} f(x y_1) d F^*(x)\right| \\
\leq & \int_{0}^{C_\gamma} |f(xy_2) - f(xy_1)|  dF^{*(\ell,\gamma)} (x) \\
& \hspace{2.5cm}+ \left|\int_{0}^{C_\gamma} f(x y_1) d F^{*(\ell,\gamma)} (x)  -
\int_{0}^{C_\gamma} f(x y_1) d F^*(x)\right| \\
\stackrel{\eqref{eq:mod_of_continuity}}{\leq} & \delta/4 +
\left|\int_{0}^{C_\gamma} f(x y_1) d F^{*(\ell,\gamma)} (x)  -
\int_{0}^{C_\gamma} f(x y_1) d F^*(x)\right|.
\end{split}
\end{equation}
We will argue that the second expression is also bounded from above by $\delta/4$ when $\gamma$ is small enough.
Since, $f$ as well as $F^*$ and $F^{*(\ell,\gamma)}$ have bounded variation and $f$ is continuous, we can use the
integration-by-parts formula for each one of the two integrals. We have
\begin{equation*}
\begin{split}
\int_{0}^{C_\gamma} f(x y_1) d F^{*(\ell,\gamma)} (x)
& = f(C_\gamma y_1) F^{*(\ell,\gamma)} (C_\gamma + ) - f(0 y_1) F^{*(\ell,\gamma)} (0-) - \int_{0}^{C_\gamma}
F^{*(\ell, \gamma)} (x) d f(xy_1)  \\
&\stackrel{(f(0)=0)}{=} f(C_\gamma y_1) F^{*(\ell,\gamma)} (C_\gamma + ) - \int_{0}^{C_\gamma}
F^{*(\ell, \gamma)} (x) d f(xy_1)
\end{split}
\end{equation*}
and
\begin{equation*}
\begin{split}
\int_{0}^{C_\gamma} f(x y_1) d F^{*} (x)
& = f(C_\gamma y_1) F^{*} (C_\gamma + ) - f(0) F^{*} (0-) - \int_{0}^{C_\gamma}
F^{*} (x) d f(xy_1)  \\
& \stackrel{(f(0)=0)}{=} f(C_\gamma y_1) F^{*} (C_\gamma + ) - \int_{0}^{C_\gamma}
F^{*} (x) d f(xy_1).
\end{split}
\end{equation*}
Thereby, we have
\begin{equation*}
\begin{split}
&\left|\int_{0}^{C_\gamma} f(x y_1) d F^{*(\ell,\gamma)} (x)  -
\int_{0}^{C_\gamma} f(x y_1) d F^*(x)\right| \leq \\
&| f(C_\gamma y_1)| ~ \left|F^{*(\ell,\gamma)} (C_\gamma + ) - F^* (C_\gamma + ) \right| +
\int_{0}^{C_\gamma} |F^{*(\ell,\gamma)} (x) - F^*(x) | d f(xy_1).
\end{split}
\end{equation*}
By Lemma~\ref{lem:size_biased_approx} there exists $\gamma_1$ such that for any $\gamma < \gamma_1$,
for almost all $x \in [x_0, C_\gamma ]$ we have
$$ |F^{*(\ell,\gamma)} (x) - F^*(x) | < \rho_1 (\gamma), $$
for any $\ell$ that is large enough (depending on $\gamma$).
Additionally, for the set of $x$s of measure 0 where this does not hold, the difference is bounded by 1.
As $f$ is differentiable and, therefore, continuous everywhere, the second integral is bounded by $\rho_1 (\gamma) |f(C_\gamma y_1)|$.
Therefore,
\begin{equation*}
\left|\int_{0}^{C_\gamma} f(x y_1) d F^{*(\ell,\gamma)} (x)  -
\int_{0}^{C_\gamma} f(x y_1) d F^*(x)\right| \leq 2| f(C_\gamma y_1)|~ \rho_1 (\gamma).
\end{equation*}
Since $f$ is bounded, if $\gamma$ is small enough, then the latter expression is at most $\delta /4$.
Hence (\ref{eq:target_I}) follows if we substitute this bound into (\ref{eq:Intermediate_I}).
\end{proof}
We now proceed with the proof of (\ref{eq:target_II}).
\begin{proof}[Proof of (\ref{eq:target_II})]
Assume that  $|f(x)| < b$ for any $x \in \mathbb{R}$. Hence we have
\begin{equation}\label{eq:I}
\left| \int_{C_\gamma}^{\infty} f(y y_1) dF^{*}(y)  \right| < b \ex {\mathbf{1}_{\{W_F^{*} \geq C_\gamma\}}}.
\end{equation}
We bound the latter indicator function from above by the function
\begin{equation*} h(x) =
\begin{cases}
2 - e^{-(x- C_\gamma)}, & \mbox{$x>C_\gamma - \ln 2$} \\
0, & \mbox{otherwise}
\end{cases}.
\end{equation*}
It is easy to see that for any $x > C_\gamma $ the above function exceeds 1, whereas for any other $x$ it is non-negative.
It hits 0 at $x= C_\gamma -\ln 2$. Also, observe that if $x\rightarrow \infty$, then $h(x)$ approaches 2 from below.
Hence, $h$ is bounded and continuous.

Now, we have
$$  \ex {\mathbf{1}_{\{W_F^{*} \geq C_\gamma\}}} \leq  \ex {h(W_F^*)} = {\ex {W_F h(W_F)}\over d},$$
by the definition of the $W_F$ size-biased distribution (\ref{eq:Size_biased}).
We will bound $\ex {W_F h(W_F)}$ as follows:
$$\ex {W_F h(W_F)} \leq 2\ex{\mathbf{1}_{\{ W_F > C_\gamma - \ln 2 \}} W_F} < d \delta /(4b), $$
provided that $\gamma$ is small enough.
Therefore,
$$ \ex {\mathbf{1}_{\{W_F^{*} \geq C_\gamma\}}} \leq  {\delta \over 4 b}, $$
and (\ref{eq:target_II}) follows from (\ref{eq:I}).
\end{proof}
\end{proof}

\begin{proof}[Proof of Proposition~\ref{prop:roots_approx}]
Recall that $\hat{y}_{\ell,\gamma}$ is the smallest positive root of
$$\hat{\mu}_{\cU}(x)=0.$$
To emphasize the dependency of $\hat{\mu}_{\cU}$ on $\ell, \gamma$, we will set $f_r^{(\ell,\gamma)} (x) := \hat{\mu}_{\cU} (x)$;
thus
$$
f_r^{(\ell,\gamma)} (x )
= {W_{\gamma}'\over d} +p~ c(\gamma ){\hat{d}^{(\ell,\gamma )}\over d}  - x +
(1-p)~c(\gamma ){d^{(\ell,\gamma)}\over d} \int_{0}^{C_\gamma} \psi_r \left( y  x  \right) dF^{*(\ell,\gamma)}(y).
$$

We consider the functions $f_r^{(\ell,\gamma)} (x)$ restricted on the unit interval $[0,1]$.
\begin{claim}\label{clm:eqcts_I}
There exists $\gamma_4>0$ such that for any $\gamma < \gamma_4$ the family
$$\left\{ f_r^{(\ell,\gamma)} (x) \right\}_{\ell > \ell_1},$$
for some $\ell_1 = \ell_1 (\gamma)$, is equicontinuous.
\end{claim}
\begin{proof}[Proof of Claim~\ref{clm:eqcts_I}]
Let $\eps \in (0,1)$ and let $\gamma_4 < \gamma_0$ (cf. Definition~\ref{def:F-conv}) be such that for any $\gamma < \gamma_4$ we have $1/C_\gamma < \eps /2$.
Recall that $\{ (\bW^{(\ell, \gamma)}(n))_{n\geq 1} \}_{\ell \in \mathbb{N}}$ is $F$-convergent with error $\rho >0$ (cf.~Definition~\ref{def:F-conv}).
Fixing such a $\gamma$, for any $\ell > L_1(\gamma )$ (cf. Definition~\ref{def:F-conv})
\begin{equation} \label{eq:av_deg_approx} \left| {d^{(\ell, \gamma)} \over d} - 1 \right| < \rho /d. \end{equation}
% \begin{equation}\label{eq:1stApp}
% \PRO {W_F^* \geq C_\gamma} = \int_{C_\gamma}^{\infty} x dF(x) < {\eps \over 8}.
% \end{equation}
% Also, for $\gamma < \gamma_0^{(1)}$ which is a point of continuity of the cdf of $W_F^*$, let $\ell_0^{(1)}= \ell_0^{(1)} (\gamma)$ be such that for
% any $\ell > \ell_0^{(1)}$ we have
% \begin{equation} \label{eq:2ndApp}
% \left| \PRO {W_F^{(\ell,\gamma)} \geq C_\gamma} - \PRO {W_F^* \geq C_\gamma} \right| < {\eps \over 8}.
% \end{equation}
The function $\psi_r (y)$ is uniformly continuous on the closed interval $[x_0,C_\gamma]$. Hence there exists $\delta \in (0,1)$ such that
for any $x_1, x_2 \in [0,1]$ with $|x_1-x_2| < \delta / C_\gamma$ we have $|\psi_r (w x_1) - \psi_r (w x_2)| < d \eps/(2\rho)$.
Thus,
\begin{equation} \label{eq:3rdApp_I}
\left| \int_{0}^{C_\gamma} \psi_r \left( y x_1\right) dF^{*(\ell,\gamma)} (y) -
\int_{0}^{C_\gamma} \psi_r \left( y x_2\right) dF^{*(\ell,\gamma)} (y) \right| < \frac{\eps}{2(1+\rho /d)}.
\end{equation}
Thereby, for any $\gamma < \gamma_4$ and $\ell > L_1(\gamma )$ (which we take as $\ell_1 (\gamma)$),
if $x_1, x_2 \in [0,1]$ are such that $|x_1-x_2| < \delta /
C_\gamma$,
then
\begin{equation*}
\begin{split}
|f_r^{(\ell,\gamma)}(x_1) - f_r^{(\ell, \gamma)}(x_2)| 
& \leq  \left|x_1 - x_2 \right| +
 {d^{(\ell,\gamma)}\over d} \left| \int_{0}^{C_\gamma} \psi_r \left( y x_1\right) dF^{*(\ell,\gamma)(y)} -
\int_{0}^{C_\gamma} \psi_r \left( y x_2\right) dF^{*(\ell,\gamma)(y)}  \right| \\
& \stackrel{1/C_\gamma < \eps /2, (\ref{eq:av_deg_approx}), (\ref{eq:3rdApp_I})}{\leq}
{\delta \over C_\gamma} + \frac{\eps}{2(1+\rho /d)}~(1+\rho/d)  \leq  {\eps \over 2} + {\eps \over 2} = \eps.
\end{split}
\end{equation*}
\end{proof}
By the Arzel\'a-Ascoli Theorem, there exists a subsequence $\{ \ell_k\}_{k \in \mathbb{N}}$ such that
$$ \left\{ f_r^{(\ell_k,\gamma)} (x) \right\}_{k \in \mathbb{N}}$$
is convergent in the $L_\infty$-norm on the space of all continuous real-valued functions on $[0,1]$.

Now, recall that $\hat{y}$ is the smallest positive root of $f_r (y; W_F^*,p)=0$ and,  moreover, $f_r'(\hat{y};W_F^*,p) <0$.
Hence,  there exists $\delta_0 >0$ such that
\begin{equation*}
\begin{split}
f_r  (\hat{y} &+\delta_0;W_F^*,p) < 0 \ \mbox{and} \\
& f_r (\hat{y} -\delta_0;W_F^*,p) > 0.
\end{split}
\end{equation*}
Applying Proposition~\ref{prop:cts_approx}, we deduce that there exists $\ell_2 = \ell_2 (\delta_0,\gamma)$ with the property that for any
$k$ such that $\ell_k > \ell_2 $ we have
\begin{equation*}
\begin{split}
f_r^{(\ell_k,\gamma)}  (\hat{y} &+\delta_0) < 0 \ \mbox{and} \\
& f_r^{(\ell_k,\gamma)} (\hat{y} -\delta_0) > 0.
\end{split}
\end{equation*}
In turn, this implies that for any such $k$ there exists a root of $f_r^{(\ell_k,\gamma)} (x)$ in $B(\hat{y};\delta_0)$.

To conclude the proof of the proposition, we need to show that there is no root of $f_r^{(\ell_k,\gamma)}$ in the interval
$[0,\hat{y}-\delta_0]$. Assume, for the sake of contradiction, that there exists a sub-subsequence $\{\ell_{k_i} \}_{i \in \mathbb{N}}$
such that $\hat{y}_{\ell_{k_i},\gamma} \in [0,\hat{y}- \delta_0]$. By the sequential compactness of this interval, we deduce
that there is a further sub-subsequence $\{ \ell_{k_j} \}_{j \in \mathbb{N}}$ over which
$$ \hat{y}_{\ell_{k_j},\gamma} \rightarrow \hat{y}_\gamma, $$
as $j \rightarrow \infty$, for some $\hat{y}_\gamma \in [0,\hat{y}-\delta_0]$.

We will show that $\hat{y}_\gamma =0$. Assume that this is not the case.
Let $\delta \in (0,1)$ and let $\gamma_3 = \gamma_3(\delta)$ be as in Proposition~\ref{prop:cts_approx}.
Consider a $\gamma < \gamma_3$ in the range of $F$.
 Then there exists $j_0$ such that
for $j > j_0$ we have
\begin{equation} \label{eq:subs_approx_I}
\left|\int_{0}^{C_\gamma} f(y \hat{y}_{\ell_{k_j},\gamma}) dF^{*(\ell_{k_j},\gamma)}(y)
- \mathbb{E} \left( f(W_F^* \hat{y}_\gamma) \right)\right|
< \delta/3.
\end{equation}
Assume that $\gamma$ is small enough so that
$$ |c(\gamma ) - 1|, \ \rho (\gamma ) /d, \ r(\gamma ) /d < \delta /9 \min \{ \left\| f \right\|_{\infty}^{-1} ,1 \}.  $$
Moreover, assume that $j_0$ is large enough so that for $j > j_0$  we have
\begin{equation*}
\left| {d^{(\ell_{k_j}, \gamma)} \over d} - 1 \right| < \rho (\gamma ) /d \ \mbox{and} \
 \left| {\hat{d}^{(\ell_{k_j}, \gamma)} \over d} - 1 \right|  < r(\gamma ) /d,
\end{equation*}
by Definition~\ref{def:F-conv} and Claim~\ref{clm:hat_d_approx}.
Hence
\begin{equation}\label{eq:subs_approx_II}
\begin{split}
\left|c(\gamma ){\hat{d}^{(\ell_{k_j}, \gamma)} \over d} - 1  \right| &\leq
\left| c(\gamma ) - 1\right|{d^{(\ell_{k_j}, \gamma)} \over d}
+ \left| {\hat{d}^{(\ell_{k_j}, \gamma)} \over d} - 1 \right|  \\
&\leq \left| c(\gamma ) - 1\right| + \left| c(\gamma ) - 1\right|  \left| {d^{(\ell_{k_j}, \gamma)} \over d} - 1 \right|  +
\left| {\hat{d}^{(\ell_{k_j}, \gamma)} \over d} - 1 \right| \\
&\leq 3 \delta /9 = \delta /3.
\end{split}
\end{equation}
Similarly, we can show that
\begin{equation}\label{eq:subs_approx_III}
\begin{split}
\left|c(\gamma ){d^{(\ell_{k_j}, \gamma)} \over d} - 1  \right| \leq {\delta \over 9} \left\| f \right\|_{\infty}^{-1}.
\end{split}
\end{equation}

Now, consider the function $\hat{f}_r (x) := f_r (x;W_F^*,p) + W_{\gamma}'/d$.
Since $f_r^{(\ell_{k_j},\gamma)} (\hat{y}_{\ell_{k_j},\gamma})=0$, we can write

\begin{equation*}
\begin{split}
\hat{f}_r & (\hat{y}_\gamma) = \hat{f}_r (\hat{y}_\gamma) - f_r^{(\ell_{k_j},\gamma)} (\hat{y}_{\ell_{k_j},\gamma}) \\
& \leq
p \left| c(\gamma ) {\hat{d}^{(\ell,\gamma)} \over d } - 1 \right| 
+ (1-p) \left| c(\gamma ) {d^{(\ell,\gamma)} \over d}
\int_{0}^{C_\gamma} f(y \hat{y}_{\ell_{k_j},\gamma}) dF^{*(\ell_{k_j},\gamma)}(y)
- \mathbb{E} \left( f(W_F^* \hat{y}_\gamma) \right) \right| \\
&\leq
p \left| c(\gamma ) {\hat{d}^{(\ell,\gamma)} \over d } - 1 \right|
+ (1-p) \left| \left( c(\gamma ) {d^{(\ell,\gamma)} \over d} - 1 \right)
\int_{0}^{C_\gamma} f(y \hat{y}_{\ell_{k_j},\gamma}) dF^{*(\ell_{k_j},\gamma)}(y) \right| \\
&\hspace{1cm}+
(1-p) \left|\int_{0}^{C_\gamma} f(y \hat{y}_{\ell_{k_j},\gamma}) dF^{*(\ell_{k_j},\gamma)}(y)
- \mathbb{E} \left( f(W_F^* \hat{y}_\gamma) \right) \right| \\
&\stackrel{(\ref{eq:subs_approx_I}), (\ref{eq:subs_approx_II}), (\ref{eq:subs_approx_III})}{\leq} {\delta \over 3} + {\delta \over 3}
+ {\delta \over 3} = \delta.
\end{split}
\end{equation*}
Since $\delta$ is arbitrary, it follows that
$$\hat{f}_r (\hat{y}_\gamma) := f_r (\hat{y}_\gamma;W_F^*,p) + W_{\gamma}' = 0, $$
whereby $f_r (\hat{y}_\gamma;W_F^*,p)  < 0$. Recall also that $f_r (\hat{y} -\delta_0;W_F^*,p) > 0$.
The continuity of $f_r$ implies that there is a root in $(0,\hat{y}-\delta_0)$.
But $\hat{y}$ is the smallest positive root of $f_r (x;W_F^*,p)  = 0$ and
$\hat{y}_\gamma \in [0,\hat{y} - \delta_0]$. Therefore, $\hat{y}_\gamma=0$.

But this yields a contradiction as for large $j$ we would have $\hat{y}_{\ell_{k_j},\gamma} < W_{\gamma}'/2d,$
and therefore $$ f_r^{(\ell_{k_j},\gamma)} (\hat{y}_{\ell_{k_j},\gamma})  > W_{\gamma}' /2d >0.$$
\end{proof}

\end{proof}
The following lemma shows that if the weight sequence has power law distribution with exponent between 2 and 3, then the
condition on the derivative of $f_r(x;W_F^*,p)$ that appears in the statement of Theorem~\ref{thm:FinalInfection} is always satisfied.
\begin{lemma} \label{lem:power_law_derivative}
Assume that $({\bf w} (n))_{n \geq 1}$ follows a power law with exponent $\beta \in (2,3)$.
Then $f_r'(\hat{y};W_F^*,p) < 0.$
\end{lemma}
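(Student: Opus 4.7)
Write $g(y) := (1-p)\,\EE[\psi_r(W_F^* y)] + p$ so that $f_r(y; W_F^*, p) = g(y) - y$. The plan is to establish the soft bound $f_r'(\hat y) \le 0$ without extra hypotheses on the weights, and then sharpen to strict inequality in the power-law regime by showing $g''(\hat y) < 0$. For the soft bound, I use that $\hat y$ is the smallest positive zero of $f_r$ combined with $f_r(0) = p > 0$: these imply $f_r > 0$ on $(0, \hat y)$, and taking the left-hand difference quotient at $\hat y$ yields $f_r'(\hat y) \le 0$.

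Assuming $f_r'(\hat y) = 0$ for contradiction, I differentiate $g$ twice using $\psi_r''(t) = \frac{e^{-t} t^{r-2}}{(r-1)!}\bigl((r-1) - t\bigr)$ to obtain
\[
g''(y) \;=\; -\frac{(1-p)\,y^{r-2}}{(r-1)!}\;\EE\bigl[(W_F^*)^r e^{-W_F^* y}(W_F^* y - (r-1))\bigr],
\]
so that $g''(y) < 0$ is equivalent to positivity of the expectation on the right. Under Definition~\ref{def:Distr} the limit $F(x) = 1 - c x^{-(\beta-1)}$ on $[x_0, \infty)$ yields the explicit Pareto density $f^*(x) = (\beta-2)\,x_0^{\beta-2}\,x^{-(\beta-1)}$ for $W_F^*$ via the size-biased formula~\eqref{eq:Size_biased}. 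The substitution $u = xy$ then reduces positivity of the expectation at $y = \hat y$ to positivity of
\[
A(\alpha) \;:=\; \int_\alpha^\infty u^{r-\beta+1}\, e^{-u}\,(u - (r-1))\, du \qquad \text{evaluated at } \alpha = x_0 \hat y.
\]

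The core calculation is $A(0) = \Gamma(r-\beta+3) - (r-1)\,\Gamma(r-\beta+2) = (3-\beta)\,\Gamma(r-\beta+2)$, which is strictly positive precisely because $\beta < 3$; this is where the hypothesis $\beta \in (2, 3)$ enters. To extend positivity to all $\alpha \ge 0$ I note $A'(\alpha) = -\alpha^{r-\beta+1} e^{-\alpha}(\alpha - (r-1)) \ge 0$ on $[0, r-1]$, so $A$ is non-decreasing there, whereas on $[r-1, \infty)$ the integrand defining $A(\alpha)$ is pointwise non-negative. Hence $A(\alpha) > 0$ for every $\alpha \ge 0$, and thus $g''(\hat y) < 0$.

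Finally, $f_r''(\hat y) = g''(\hat y) < 0$ together with the assumed $f_r(\hat y) = f_r'(\hat y) = 0$ gives, via second-order Taylor expansion, $f_r(\hat y + h) = \tfrac12 g''(\hat y) h^2 + o(h^2) < 0$ for all small $h \neq 0$; taking $h < 0$ contradicts $f_r > 0$ on $(0, \hat y)$, forcing $f_r'(\hat y) < 0$. The main technical step is the Gamma-function identity that produces the factor $(3 - \beta)$, and everything else is routine sign analysis and Taylor expansion.
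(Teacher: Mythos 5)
Your proof is correct, but it is organized differently from the paper's argument, so it is worth comparing the two. The paper substitutes the fixed-point identity $\hat y = p+(1-p)\EE[\psi_r(W_F^*\hat y)]$ into $f_r'(\hat y) = -1 + (1-p)\frac{r}{\hat y}\EE[p_r(W_F^*\hat y)]$ (with $p_r(x)=e^{-x}x^r/r!$), reducing the claim to the comparison $\EE[\psi_r(W_F^* x)] > r\,\EE[p_r(W_F^* x)]$, and proves this globally on $(0,1]$ by showing the difference is increasing in $x$; the derivative of that difference is, up to positive factors, exactly the expectation $\EE[(W_F^*)^r e^{-W_F^* x}(W_F^* x-(r-1))]$ whose positivity you also need, and the paper establishes it by the same power-law substitution followed by integration by parts, where $\beta<3$ enters through $r-\beta+2>r-1$. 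You avoid the fixed-point identity altogether: minimality of the root gives the soft bound $f_r'(\hat y)\le 0$ (this step uses $f_r(0)=p>0$, i.e.\ the regime $p\in(0,1)$ in which the lemma is invoked), and a putative tangency $f_r'(\hat y)=0$ is excluded by strict concavity at $\hat y$, which you reduce to the same incomplete-Gamma inequality, evaluated at lower limit $0$ via $\Gamma(r-\beta+3)-(r-1)\Gamma(r-\beta+2)=(3-\beta)\Gamma(r-\beta+2)>0$ and extended to all lower limits by the monotonicity analysis of $A$. The computational heart (the factor $3-\beta$) is thus shared; what your route buys is locality — no fixed-point identity, no need for $\hat y\le 1$, and positivity of the key expectation is only required at $\hat y$ — while the paper's route yields the stronger global statement $\EE[\psi_r(W_F^*x)]>r\,\EE[p_r(W_F^*x)]$ on $(0,1]$, which gives $f_r'<0$ at any root directly. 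Two small points to tidy: for $\alpha\ge r-1$, pointwise nonnegativity of the integrand only gives $A(\alpha)\ge 0$, so add that the integrand is strictly positive on $(\max\{\alpha,r-1\},\infty)$ to conclude $A(\alpha)>0$; and your explicit density $(\beta-2)x_0^{\beta-2}x^{-(\beta-1)}$ presumes $c=x_0^{\beta-1}$ in Definition~\ref{def:Distr}, whereas (as in the paper's own proof, which likewise works with the continuous Pareto form) all that matters is that the size-biased law has density a positive multiple of $x^{-(\beta-1)}$ on $(x_0,\infty)$, so the specific normalization is harmless but should not be stated as exact. Finally, like the paper, you differentiate under the expectation without comment; since $\EE[W_F^*]=\infty$ for $\beta\in(2,3)$, a one-line domination remark (the exponential factor $e^{-W_F^*y}$ for $y$ bounded away from $0$) would make this airtight.
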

\begin{proof}
From the definition of $f$ we obtain that
\begin{equation*}
\begin{split}
f_r' (x; W_F^*,p)
% &= - 1 + (1-p) \ex { W_F^* e^{-W_F^* x}
%{\left( W_F^* x \right)^{r-1} \over (r-1)!} } \\
%&
= -1 + (1-p){r \over x} \ex{e^{-W_F^* x}
{\left( W_F^* x \right)^r \over r!} }.
\end{split}
\end{equation*}
To show the claim it is thus sufficient to argue that
$$ (1-p)r \ex{e^{-W_F^* \hat{y}}
{\left( W_F^* \hat{y} \right)^r \over r!} } < \hat{y} = p + (1-p) \ex {\psi_r (W_F^* \hat{y})}.$$
In turn, it suffices to prove that
\begin{equation}
\label{eq:rEE}
r \ex{e^{-W_F^* \hat{y}}{\left( W_F^* \hat{y} \right)^r \over r!} } < \ex {\psi_r (W_F^* \hat{y})}.
\end{equation}
We set $p_r (x) = e^{-x}x^r/r!$. Furthermore, we set
$g(x):=\ex{ p_r( W_F^* x) }$ and $f(x):=\ex { \psi_r \left( W_F^* x  \right)}$. Then we claim that
$$f(x) > r g(x) \quad \text{for any } x\in(0,1],$$
which is equivalent to~\eqref{eq:rEE}. To see the claim, we will consider the difference $f(x) - rg(x)$ and show that it is increasing with respect to $x$; the statement then follows from $f(0) - rg(0)  = 0$. The derivative with respect to $x$ is
\begin{equation*}
\begin{split}
(f(x)-r g(x))' &= \ex {W_F^* p_{r-1}\left( W_F^* x \right)} + r\left( \ex{ W_F^* p_{r}\left( W_F^* x \right)}
- \ex{ W_F^* p_{r-1}\left( W_F^* x \right)} \right) \\
&= -{r(r-1)\over x}~\ex {p_r\left( W_F^* x \right)} +  {r(r+1) \over x}~\ex {p_{r+1} \left( W_F^* x \right)} \\
&= {r\over x}~ \left( -(r-1)~\ex {p_r\left( W_F^* x \right)} +  (r+1)~\ex {p_{r+1} \left( W_F^* x \right)}\right).
\end{split}
\end{equation*}
Hence, it suffices to show that
$$  (r+1)~\ex {p_{r+1} \left( W_F^* x \right)} > (r-1)~\ex {p_r\left( W_F^* x \right)}, $$
for $x \in (0,1]$.
Note that the probability density function of $W_F^*$ is $(\beta - 1) c w^{-\beta + 1}$, for $w > x_0$; otherwise it is equal to 0.
So we obtain for $j \in \{r,r+1\}$
\begin{equation*} \label{eq:p_r}
\begin{split}
\ex {p_j\left( W_F^* x \right)} &= (\beta - 1) c \int_{x_0}^{\infty} e^{-wx}~{(wx)^j \over j!}~w^{-\beta + 1} dw 
\stackrel{(z=wx)}{=} (\beta - 1){x^{\beta - 2}\over j!}c \int_{x_0}^{\infty} e^{-z} z^{j-\beta + 1} dz.
\end{split}
\end{equation*}
%Therefore, we also have
%$$ \ex {p_{r+1}\left( W_F^* x \right)} = (\beta - 1){x^{\beta - 2}\over (r+1)!}c \int_{x_0 x}^{\infty} e^{-z} z^{r-\beta + 2} dz. $$
Thereby, it suffices to show that
$$ \int_{x_0}^{\infty} e^{-z} z^{r-\beta + 2} dz > (r-1) \int_{x_0}^{\infty} e^{-z} z^{r-\beta + 1} dz. $$
Applying integration by parts on the integral of the left-hand side we obtain
\begin{equation*} \label{eq:recursion}
\begin{split}
\int_{x_0}^{\infty} e^{-z} z^{r-\beta + 2} dz &= e^{-x_0} x_0^{r-\beta + 2} + (r-\beta + 2)
\int_{x_0}^{\infty} e^{-z} z^{r-\beta + 1} dz \\
& > (r-\beta + 2)  \int_{x_0 }^{\infty} e^{-z} z^{r-\beta + 1} dz \stackrel{(\beta < 3)}{>} (r-1)
\int_{x_0}^{\infty} e^{-z} z^{r-\beta + 1} dz.
\end{split}
\end{equation*}
\end{proof}

\small
\bibliographystyle{plain}
\bibliography{btsrp}

\end{document}